
\documentclass[a4paper,oneside]{amsart} 
\usepackage[english]{babel} 
\usepackage{amsmath, csquotes, amssymb, tabu, booktabs,xparse,bbm, amsthm} 
\usepackage{geometry} 
\usepackage{tikz} 
\usepackage{enumitem} 
\usepackage{stmaryrd} 
\usepackage{scalerel,stackengine}   
\usepackage{graphicx}

\usetikzlibrary{decorations.markings}

\usepackage{hyperref}

\theoremstyle{definition}
\newtheorem{defn}{Definition}[section]
\newtheorem{thm}[defn]{Theorem}
\newtheorem{lem}[defn]{Lemma}
\newtheorem{prop}[defn]{Proposition}
\newtheorem{ex}[defn]{Example} 
\newtheorem{cor}[defn]{Corollary}
\newtheorem{rmk}[defn]{Remark}

\stackMath
\newcommand\reallywidehat[1]{%
\savestack{\tmpbox}{\stretchto{%
  \scaleto{%
    \scalerel*[\widthof{\ensuremath{#1}}]{\kern-.6pt\bigwedge\kern-.6pt}%
    {\rule[-\textheight/2]{1ex}{\textheight}}
  }{\textheight}%
}{0.6ex}}%
\stackon[1pt]{#1}{\tmpbox}%
}

\stackMath
\newcommand\reallywidetilde[1]{%
\savestack{\tmpbox}{\stretchto{%
  \scaleto{%
    \scalerel*[\widthof{\ensuremath{#1}}]{\kern-.6pt\sim\kern-.6pt}%
    {\rule[-\textheight/2]{1ex}{\textheight}}
  }{\textheight}%
}{0.9ex}}%
\stackon[1pt]{#1}{\tmpbox}%
} 

\NewDocumentCommand\Rnum{}{\mathbbm{R}}

\NewDocumentCommand\Cnum{}{\mathbbm{C}}

\NewDocumentCommand\dx{}{\operatorname{d\!}}
\NewDocumentCommand\party{mmO{}}{\frac{\partial^{#3} #1}{\partial^{#3} #2}}

\NewDocumentCommand\End{}{\operatorname{End}}

\NewDocumentCommand\lb{}{\llbracket} 
\NewDocumentCommand\rb{}{\rrbracket} 

\NewDocumentCommand\abs{m}{\vert #1 \vert}

\NewDocumentCommand\Zsig{}{Z_{\Sigma}}

\NewDocumentCommand\res{}{\operatorname{res}}
\NewDocumentCommand\DTM{}{\mathbbm{T}M}

\NewDocumentCommand\Symp{}{\operatorname{Symp}}
\NewDocumentCommand\Flux{}{\operatorname{Flux}} 
 
\NewDocumentCommand\Id{}{\operatorname{Id}}

\title[Branes with boundary and symplectic methods for stable GC geometry]{Lagrangian branes with boundary and symplectic methods for stable generalized complex manifolds} 
\author{Charlotte Kirchhoff-Lukat} 

\geometry{bottom=2.5cm} 

\setcounter{tocdepth}{1}

\begin{document} 
\maketitle 

\begin{abstract} 
Generalized complex (GC) geometry interpolates between ordinary symplectic and complex geometry. 
Stable generalized complex manifolds (first introduced by Cavalcanti, Gualtieri in 2015, \cite{Cavalcanti2015a}) carry a Poisson structure which is generically symplectic, but degenerates on a (real) codimension-2 submanifold. Up to gauge equivalence, the stable generalized complex structure is thus determined by what is called an elliptic symplectic form, which allows the extension of a number of techniques and results from symplectic geometry to stable GC geometry. \\
This paper introduces a new type of submanifold in stable GC manifolds: Lagrangian branes with boundary, which are generically Lagrangian and intersect the degeneracy locus in their boundary. By relating stable GC manifolds to log symplectic manifolds, we are able to prove results on local neighbourhoods and small deformations of such branes. 
We further investigate stable generalized complex Lefschetz fibrations, where Lagrangian branes with boundary arise as Lefschetz thimbles. 
These objects are thus expected to be part of a Fukaya category for stable GC manifolds, which we hope to develop in future work and which would allow the application of Floer theory techniques to a larger class of manifolds. 
\end{abstract} 

\tableofcontents 

\section{Introduction} 
Generalized complex geometry (for comprehensive references, see \cite{Gualtieri2003, Hitchin2010}) studies integrable complex structures on \emph{exact Courant algebroids} on a smooth manifold $M$, in the simplest instance complex structures on $TM\oplus T^*M$ with a certain integrability condition. 
This type of structure has sparked ongoing interest by virtue of including both complex and symplectic structures as examples, while generic generalized complex structures interpolate between the two: Each generalized complex structure induces a Poisson structure of varying rank, which determines symplectic leaves, to which there is then a transverse complex structure. 

The first and simplest examples of manifolds which admit neither a complex nor a symplectic, but a generalized complex structure, arise in the context of \emph{stable generalized complex structures} (systematically studied in \cite{Cavalcanti2015a}): These are generically symplectic, but the generalized complex structure degenerates on a real codimension-2 submanifold. 
As established in \cite{Cavalcanti2015a}, such generalized complex structures can be described in terms of so-called \emph{elliptic symplectic forms}, which are symplectic outside of and exhibit a particular type of singularity at the codimension-2 degeneracy locus. 

The main focus of this paper is on \emph{Lagrangian branes with boundary}, a new class of submanifold with boundary for stable generalized complex manifolds that is not included in the previously studied class of generalized complex branes. Lagrangian branes with boundary are generically Lagrangian in the bulk of the manifold and intersect the degeneracy locus cleanly in their boundary. 
Since they are Lagrangian with respect to the elliptic symplectic form and arise as Lefschetz thimbles in the stable generalized complex version of Lefschetz fibrations, they are expected to appear in the construction of a generalisation of the Fukaya category for stable generalized complex manifolds which considers non-compact Lagrangians, a generalisation of the Wrapped Fukaya category (see for example \cite{Abouzaid2007}). The generalisation of Floer theory and the construction of a Wrapped Fukaya category for (certain) stable generalized complex manifolds are aims for future work. 

In order to find a normal form for the local neighbourhoods Lagrangian branes with boundary and to study their deformations, it turns out to be useful to relate stable generalized complex manifolds to so-called \emph{logarithmic symplectic manifolds} via a real oriented blow-up of the codimension-2 degeneracy locus: Such manifolds carry a Poisson structure which is symplectic, i.e. non-degenerate, except on a hypersurface, in this case a boundary. 

\paragraph{\textbf{Summary of the paper.}} After summarising prerequisite results in section \ref{sec:b}, we give a definition for Lagrangian branes with boundary (section \ref{sec:bwb}). 
We then turn to Lagrangians in logarithmic symplectic manifolds, which intersect the degeneracy locus  transversely in a hypersurface: For such submanifolds, we can prove a Lagrangian neighbourhood theorem (section \ref{sec:lnbhd}). Section \ref{sec:symplecto} establishes the notion of symplectomorphism for logarithmic and elliptic symplectic structures and defines flux homomorphisms. 
The flux homomorphism for log symplectic manifolds allows us to study small deformations of log Lagrangians within a tubular neighbourhood:
Up to Hamiltonian isotopy, these are given by the first \emph{logarithmic cohomology} with respect to the hypersurface given by the intersection with the singular locus.

In section \ref{sec:blow} we prove that a real oriented blow-up on the codimension-2 degeneracy locus of a stable generalized complex manifold naturally produces a log symplectic manifold with boundary, give conditions for a converse blow-down of a log-symplectic to a stable generalized complex manifold, and establish a correspondence between Lagrangian branes (with and without boundary) and log Lagrangians which intersect the boundary transversely. 
Using this result and those from sections \ref{sec:lnbhd} and \ref{sec:symplecto}, we find a standard local neighbourhood of Lagrangian branes with boundary (a so-called \emph{wedge neighbourhood}). We prove that small deformations of Lagrangian branes with boundary up to Hamiltonian isotopy are given by the first logarithmic cohomology of the brane. 
Sections \ref{sec:logehr}, \ref{sec:gclef} and \ref{sec:thimb} consider Lefschetz thimbles in the generalisation of Lefschetz fibrations to the logarithmic and elliptic symplectic setting (first studied in \cite{Cavalcanti2016,Cavalcanti2017,Behrens2017}), which intersect the degeneracy locus. Many such thimbles are disk-shaped Lagrangian branes with $S^1$-boundary.

Section \ref{sec:hol} considers specific examples of complex surfaces which carry a holomorphic Poisson structure that is stable generalized complex. Lagrangian branes with boundary are never complex submanifolds. We do however provide examples where \emph{outside} the degeneracy locus, such branes can be deformed into complex curves via Hamiltonian isotopies, which are in some cases non-algebraic. 

\paragraph{\textbf{Acknowledgements.}} 
The work presented in this text was done under the supervision of Professor Marco Gualtieri at the University of Toronto as part of a stay as a Visiting International Research Student. The author extends sincere thanks to Marco Gualtieri for his supervision and guidance throughout the course of this project. I further express my gratitude to the University of Toronto for their hospitality during the academic year 2016/17, as well as Trinity College, Cambridge, and the Rouse-Ball Travelling Scholarship for Mathematics for providing funding, which allowed for this productive research visit. \\
Further thanks to Geoffrey Scott for useful discussions about log symplectic geometry. 
During the completion of this work, the author was supported by an STFC Studentship and a Graduate Studentship from Trinity College, Cambridge. Since October 2018, the author is supported by the long-term structural funding Methusalem grant of the Flemish Government. 

\section{Basic definitions and preliminary results} \label{sec:b}
Throughout this text, let $M^{2n}$ be an even-dimensional smooth manifold. Let $H\in \Omega^3_{\operatorname{cl}}(M)$. 

\subsection{Generalized complex geometry} 
\begin{defn}\textbf{\cite{Gualtieri2003, Hitchin2003}} A \emph{generalized complex structure} $\mathcal{J}$ on $M$ is $\mathcal{J}\in \End(TM\oplus T^*M)$ s.t.  $\mathcal{J}^2 =-\mathbbm{1}$, $\mathcal{J}$ is orthogonal with respect to the natural symmetric pairing\\ $\left<X+\xi,Y+\eta\right>=\eta(X)+\xi(Y)$, 
and the $+i$-eigenbundle of $\mathcal{J}$, $L\subset (TM\oplus T^*M)\otimes \Cnum$, is integrable with respect to the Courant-Dorfman bracket 
\[ \lb X+\xi,Y+\eta\rb = [X,Y]+L_X \eta - i_Y\dx \xi + i_Y i_X H \] 
\end{defn} 
Equivalently, generalized complex structures can be characterised directly by their $+i$-eigenbundle $L$, or by their so-called \emph{canonical bundle} $K\subset \wedge^{\bullet}T^*_{\Cnum}M$ (see e.g. \cite{Gualtieri2003, Cavalcanti2015a}). 

Any generalized complex structure $\mathcal{J}: \DTM \rightarrow \DTM$ induces a Poisson structure on $M$ as follows: 
\begin{prop} (see \cite{Gualtieri2003}.) The bivector
$\pi = \operatorname{pr}_{TM}\circ \mathcal{J}\vert_{T^*M}$ is Poisson. 
\end{prop}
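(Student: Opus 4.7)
My plan is to verify the two properties characterising a Poisson bivector: antisymmetry of $\pi^\sharp := \operatorname{pr}_{TM}\circ\mathcal{J}|_{T^*M}$, and vanishing of the Schouten bracket $[\pi,\pi]=0$ (equivalently, the Jacobi identity for $\{f,g\}:=\pi(df,dg)$). Antisymmetry is a pointwise linear-algebraic statement; Jacobi requires integrability of the $+i$-eigenbundle $L$ of $\mathcal{J}$.

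For antisymmetry, I would first observe that orthogonality of $\mathcal{J}$ together with $\mathcal{J}^2=-\unity$ forces skew-adjointness: $\langle \mathcal{J}u,v\rangle = -\langle u,\mathcal{J}v\rangle$. Then for $\xi,\eta\in T^*M$,
\[ \eta(\pi^\sharp\xi) \;=\; \eta\bigl(\operatorname{pr}_{TM}\mathcal{J}\xi\bigr) \;=\; \langle \eta,\mathcal{J}\xi\rangle \;=\; -\langle \mathcal{J}\eta,\xi\rangle \;=\; -\xi(\pi^\sharp\eta), \]
where the identification $\langle\eta,\mathcal{J}\xi\rangle = \eta(\operatorname{pr}_{TM}\mathcal{J}\xi)$ uses that two covectors pair to zero under the split pairing.

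For the Jacobi identity, the key input is that for every 1-form $\xi$ the section $e_\xi := \xi - i\mathcal{J}\xi$ lies in $L$, since it is manifestly in the $+i$-eigenspace of $\mathcal{J}$. Applying this at $\xi = df$, $\eta = dg$ for $f,g\in C^\infty(M)$ and invoking integrability gives $\lb e_{df}, e_{dg}\rb \in L$. I would then project this onto $TM$: because the $L_X\eta$, $i_Y d\xi$, and $H$-twist contributions to the Courant--Dorfman bracket are all covector-valued, the $TM$-component is simply the Lie bracket of the vector parts, namely $[-i\pi^\sharp df,-i\pi^\sharp dg] = -[\pi^\sharp df,\pi^\sharp dg]$. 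Using the defining property $\mathcal{J}u=iu$ for $u\in L$ to relate the tangent and cotangent parts of $\lb e_{df},e_{dg}\rb$, and matching the outcome to $e_\alpha$ for a suitable 1-form $\alpha$, I would identify $\alpha = d\{f,g\}$ modulo $\Ker \pi^\sharp$, and conclude $[\pi^\sharp df, \pi^\sharp dg] = \pi^\sharp d\{f,g\}$. This identity on Hamiltonian vector fields is equivalent to the Jacobi identity for $\{\cdot,\cdot\}$ and hence to $[\pi,\pi]=0$.

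The main obstacle is the bookkeeping on the $T^*M$-side of $\lb e_{df},e_{dg}\rb$: the Lie-derivative terms, the interior-product terms involving the off-diagonal block $A\xi := \operatorname{pr}_{T^*M}\mathcal{J}\xi$, and the $H$-twist $i_{\pi^\sharp dg}i_{\pi^\sharp df}H$ must combine into the ``$\alpha - iA\alpha$'' shape characterising elements of $L$. The $H$-twist does not interfere with the tangent projection since $i_Y i_X H \in \Omega^1(M)$, and the maximal isotropy of $L$ (of the correct rank) ensures that the integrability condition imposes on the Poisson side exactly the Jacobi identity and no extraneous constraint.
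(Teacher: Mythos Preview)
The paper does not supply its own proof of this proposition; it simply cites \cite{Gualtieri2003} and moves on. Your approach is the standard one from that reference, and your antisymmetry argument is clean and correct.

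There is, however, a genuine gap in your Jacobi argument. You propose to match $\lb e_{df}, e_{dg}\rb$ to $e_\alpha = \alpha - i\mathcal{J}\alpha$ for some real $1$-form $\alpha$. This cannot work: the $TM$-component of $e_\alpha$ is $-i\pi^\sharp\alpha$, which is purely imaginary, whereas the $TM$-component of $\lb e_{df}, e_{dg}\rb$ is $[-iX_f,-iX_g] = -[X_f,X_g]$, which is real. These agree only when both vanish. Your hedge ``modulo $\Ker\pi^\sharp$'' does not address this; the obstruction is a real/imaginary mismatch, not a kernel ambiguity.

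The fix is to use the eigenvalue condition directly rather than trying to match a special form. Write $\lb e_{df}, e_{dg}\rb = V + \rho$ with $V\in\Gamma(T_{\Cnum}M)$, $\rho\in\Gamma(T^*_{\Cnum}M)$. Membership in $L$ gives, via the block decomposition of $\mathcal{J}$, the relation $\pi^\sharp\rho = (i - I)V$ where $I = \operatorname{pr}_{TM}\mathcal{J}|_{TM}$. Now $V = -[X_f,X_g]$ is real, so taking imaginary parts yields $\pi^\sharp(\operatorname{Im}\rho) = V$. A direct computation of the covector part of the Courant--Dorfman bracket shows that the only imaginary contribution to $\rho$ is $-i\,L_{X_f}dg = -i\,d\{f,g\}$; the terms involving $A = \operatorname{pr}_{T^*M}\mathcal{J}|_{T^*M}$ and the $H$-twist are all real. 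Hence $\operatorname{Im}\rho = -d\{f,g\}$ and $\pi^\sharp d\{f,g\} = [X_f,X_g]$, which is exactly $X_{\{f,g\}} = [X_f,X_g]$, the Jacobi identity. Your ``bookkeeping'' paragraph correctly anticipated that the $A$- and $H$-terms need to be handled, but the point is that they land in the real part and are therefore irrelevant to the Poisson conclusion.
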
 
We call $\operatorname{Type}(\mathcal{J}) = n-1/2 \operatorname{rank}\pi $ the \emph{type} of the generalized complex structure. Generalized complex structures of type $0$ are equivalent to symplectic structures, while generalized complex structures of maximum type $n$ correspond to complex structures. In general, at every point a generalized complex structure of type $k$ is equivalent to the direct sum of a complex structure of complex dimension $k$ and a transverse symplectic structure of real dimension $2n-2k$ (see \cite{Gualtieri2003}). 

\subsection{Complex and elliptic divisors, stable generalized complex structures} 
There are generalized complex structures which are generically, but not everywhere, symplectic: Namely, their associated Poisson structures are symplectic structures almost everywhere, but change type along a lower dimensional submanifold. These so-called \emph{stable generalized complex structures} are studied systematically in \cite{Cavalcanti2015a}. All results and definitions presented in this subsection can be found in this reference. 

They are similar (and, as we show in section \ref{sec:blow}, intimately connected) to log symplectic manifolds: Poisson manifolds where the $n$-th power of the Poisson bivector $\pi^n\in \Gamma(\wedge^{2n} TM)$ vanishes transversely, and thus $\pi$ is non-degenerate outside a smooth codimension-1 submanifold. Such structures are studied in detail for example in \cite{Guillemin2014}, and they turn out to be equivalent to symplectic structures for the so-called log tangent bundle, a natural Lie algebroid associated to any codimension-1 submanifold. 

Stable generalized complex structures are formally described using the concept of \emph{complex divisors}, named because of their similarity to divisors in complex varieties: 

\begin{defn}(See \cite{Cavalcanti2015a}.)  \begin{enumerate}[label=(\roman*)]
\item A \emph{complex divisor} on a smooth manifold $M$ is a pair $D=(U,s)$ of a complex line bundle $U\rightarrow M$ and a section $s\in\Gamma(U)$ which intersects the zero section transversely. We also write $D=\{s=0\}$ and also call it the complex divisor. ($D\subset M$ is a smooth codimension-2 submanifold.) 
\item The \emph{vanishing ideal} associated to $D$ is 
\[ I_s= \operatorname{Im}\left(s: \Gamma(U^*)\rightarrow C^{\infty}_{\Cnum}(M)\right) \] 
\item The \emph{complex logarithmic tangent bundle} $T_{\Cnum}M(-\log D)$ associated to $D$ is the smooth vector bundle whose sections are
\[Z\in \Gamma(T_{\Cnum}M) \text{ s.t. } Z(I_s)\subset I_s \] 
(These form a locally free sheaf and are thus indeed the sections of a smooth vector bundle.) 
\end{enumerate} 
\end{defn} 
In fact, $T_{\Cnum}M(-\log D)$ inherits the Lie bracket from $T_{\Cnum}M$, as well as an anchor $a:T_{\Cnum}M(-\log D)\rightarrow T_{\Cnum} M$ and is thus a (complex) Lie algebroid. Its dual bundle is $T^*_{\Cnum}M(\log D)$, and there is a differential complex of complex logarithmic forms 
\[ \dx : \Gamma(\wedge^{k} T^*_{\Cnum}M(\log D)) \rightarrow \Gamma(\wedge^{k+1}T^*_{\Cnum}M(\log D)) \] 

Any complex divisor defines what is called an \emph{elliptic divisor} $(U\otimes \bar{U},s\otimes \bar{s})$, a pair of a \emph{real} line bundle on $M$ with a section that vanishes critically on the smooth codimension-2 submanifold $D$. 
\begin{defn} 
An \emph{elliptic divisor} is a pair $(R,q)$ of a real line bundle $R\rightarrow M$ with section $q\in \Gamma(R)$ which vanishes critically on a smooth codimension-2 submanifold $D \subset M$ s.t. the \emph{normal Hessian} of $q$ along $D$ is positive definite. The vector fields preserving the associated ideal $I_q=\operatorname{Im}(q:R^*\rightarrow C^{\infty}(M))$ form a locally free sheaf and are thus the sections of a smooth real vector bundle $TM(-\log\abs{D})$, the \emph{elliptic tangent bundle} associated to $(R,q)$. 
\end{defn} 
Just like for real logarithmic forms, it makes sense to consider the symplectic forms for the elliptic Lie algebroid $TM(-\log \abs{D})$: 
\begin{defn} 
An \emph{elliptic symplectic form} $\omega\in \Gamma(\wedge^2 T^*M(\log \abs{D}))$ is s.t. $\omega$ is non-degenerate as a two-form on $TM(-\log \abs{D})$ and $\dx \omega =0\in \Gamma(\wedge^3 T^*M(\log \abs{D})$. 
\end{defn} 
Clearly, $\pi = \omega^{-1}$ defines a Poisson structure on $M$ that is non-degenerate on $M\setminus D$ and has lower rank on $D$. 

Any elliptic divisor whose vanishing locus $D$ is co-oriented is in fact of the form $(U\otimes \bar{U},s\otimes \bar{s})$ with $(U,s)$ a complex divisor, and $(U,s)$ is unique up to isomorphism. There is thus a Lie algebroid morphism 
\[ \iota: TM(-\log\abs{D})\otimes \Cnum \rightarrow T_{\Cnum}M(-\log D) \] 

\paragraph{\textbf{Residues of logarithmic and elliptic forms}} For both logarithmic (real and complex) and and elliptic differential form there are notions of residue; the residue of such a form is always a smooth form on the degeneracy locus of smaller degree. 

For a (real or complex) logarithmic form $\alpha\in \Omega_{(\Cnum)}^k(M,\log Z)$, where the degeneracy locus $Z$ is locally given by the (real or complex) defining function $f$, the residue map is: 
\[ \res:  \Omega_{(\Cnum)}^k(M,\log Z)\rightarrow \Omega^{k-1}_{(\Cnum)}(Z), \alpha \mapsto \iota_Z^*\left(i_{f\party{}{f}} \alpha\right),\]
which can be shown to be independent of $f$. 

For an elliptic form $\alpha \in \Omega^k(M,\log \abs{D})$, where the elliptic divisor is locally given by the radial function $r^2$, and the corresponding angular coordinate for the normal bundle is $\theta$,  the \emph{elliptic residue} is 
\[ \res_e: \Omega^k(M,\log \abs{D})\rightarrow \Omega^{k-2}(D), \alpha \mapsto \iota^*_D\left(i_{r\party{}{r}} i_{\party{}{\theta}} \alpha\right)\] 
If $\res_e \alpha=0$, we can define the \emph{radial residue} 
\[ \res_r: \Omega^k(M,\log \abs{D})\rightarrow \Omega^{k-1}(D), \alpha\mapsto \iota^*_D\left(i_{r\party{}{r}} \alpha\right)\]
For details see \cite{Cavalcanti2015a}. 

\paragraph{\textbf{Stable generalized complex structures}} Recall that a generalized complex structure is uniquely defined by its canonical bundle $K\subset \wedge^{\bullet} T^*_{\Cnum} M$. Consider the section $s\in \Gamma(K^*)$ which projects any $\rho \in K_p, p\in M$ to its degree-zero-component: 
\[ \left<\rho,s_p\right>:= \rho_0 \in \Rnum \] 
\begin{defn} 
A \emph{stable generalized complex structure} is one where $D=(K^*,s)$ is a complex divisor, which we then call the \emph{anticanonical divisor}. By abuse of notation, we also write $D=\{s=0\}$ and call it the anticanonical divisor. 
\end{defn} 
\begin{thm}\textbf{(Theorem 3.2 in \cite{Cavalcanti2015a})} 
Any stable generalized complex structure $\mathcal{J}$ on $(M,H)$ defines a complex log form $\sigma=B+i\omega$ with $\dx \sigma=a^* H$ and $\omega$ non-degenerate for the anticanonical divisor $D=(K^*,s)$. (Such a form is called \emph{complex log symplectic}.) Conversely, given a complex divisor $D$ and a complex log symplectic form $\sigma$ for a particular pair $(M,H)$, we can construct a stable generalized complex structure. These two assignments are inverse to each other. In this correspondence, any local trivialisation of the canonical line bundle $K$ satisfies 
\[ a^*\rho= \rho_0 e^{\sigma}. \]
\end{thm}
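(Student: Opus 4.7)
The plan is to work locally around a point $p \in D$, extract a candidate $\sigma$ from the canonical line bundle, check that it has only logarithmic singularities, and verify integrability and non-degeneracy. The whole construction will then glue, because the defining relation $a^*\rho = \rho_0 e^\sigma$ is intrinsic once the anticanonical divisor $(K^*,s)$ is fixed.

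For the forward direction, fix a local trivialisation $\rho$ of $K$ on an open $U\subset M$. Decomposing by form degree, write $\rho = \rho_0 + \rho_2 + \rho_4 + \dots$, where $\rho_0\in C^{\infty}_\Cnum(U)$ cuts out $D$ transversely by the stable assumption. Away from $D$, set
\[ \sigma \;=\; \log\!\left(\frac{\rho}{\rho_0}\right) \;=\; \log\!\left(1 + \tfrac{\rho_2}{\rho_0} + \tfrac{\rho_4}{\rho_0} + \cdots \right), \]
which is a finite sum of smooth complex forms with possible poles only along $D$. I would use the transversality of $\rho_0$ to choose a local complex coordinate $z$ with $D=\{z=0\}$ and $\rho_0 = z$ in the trivialisation, and then expand the numerators in powers of $z$ to show that $\sigma$ lies in $\Omega^\bullet_\Cnum(U,\log D)$; the zero-form part produces precisely a $\log z$ term. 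Writing $\sigma = B + i\omega$, the identity $\rho = \rho_0 e^{\sigma}$ holds tautologically after this extension, which also gives the normalisation stated in the theorem once we account for the anchor map.

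The integrability of $\mathcal{J}$ is equivalent to the existence of a section $X+\xi$ of $(TM\oplus T^*M)\otimes\Cnum$ with $\dx\rho + H\wedge\rho = (X+\xi)\cdot\rho$. Substituting $\rho = \rho_0 e^\sigma$ and dividing by $\rho_0$ away from $D$ yields
\[ (\dx\sigma - a^*H)\wedge e^{\sigma} \;=\; (\text{interior-type terms})\wedge e^\sigma. \]
Because $e^\sigma$ has an invertible degree-zero part $1$ (as a log form), one reads off $\dx\sigma = a^*H$; this identity, originally proved on $M\setminus D$, extends to the log de Rham complex by continuity. Non-degeneracy of $\omega$ on the elliptic/log tangent bundle follows from the generalised complex non-degeneracy $(\rho,\bar\rho)\neq 0$ expressed in terms of $\sigma$: that pairing computes a top-degree log form built from $\omega^n$, and the transversality of $\rho_0$ ensures that it is a generator of the top log forms, i.e.\ that $\omega^n$ is a nowhere-vanishing log volume.

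For the converse, given a complex divisor $D=(U,s)$ and a complex log symplectic $\sigma$ with $\dx\sigma = a^*H$, define $K$ locally by the generator $\rho = s\, e^{\sigma}$ (interpreted via $a^*$) and check that the expression is independent of the trivialisation of $U$ up to multiplication by a nowhere-vanishing smooth function, so that $K\subset\wedge^\bullet T^*_\Cnum M$ is a well-defined line subbundle whose degree-zero component recovers $s$. Integrability of the $+i$-eigenbundle, purity, and the non-degeneracy condition defining a generalised complex structure are then routine consequences of $\dx\sigma = a^*H$ and the non-degeneracy of $\omega$ on $TM(-\log|D|)$. Finally, the two assignments $\mathcal{J}\mapsto\sigma$ and $\sigma\mapsto\mathcal{J}$ are mutually inverse by construction, since both are characterised by the identity $a^*\rho = \rho_0 e^\sigma$. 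The main technical obstacle I anticipate is the first step: proving that the naive expression $\log(\rho/\rho_0)$ genuinely defines an element of the log de Rham complex rather than merely a form with poles, which is where the transversality of $\rho_0$ and the careful choice of local trivialisation are essential.
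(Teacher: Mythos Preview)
This theorem is not proved in the present paper: it appears in Section~\ref{sec:b} (preliminaries) as a citation of Theorem~3.2 from \cite{Cavalcanti2015a}, with no argument supplied here. There is therefore no proof in this paper to compare your proposal against; the original proof is in Cavalcanti--Gualtieri.

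That said, your outline is broadly the right strategy and matches what one finds in \cite{Cavalcanti2015a}: extract $\sigma$ from a local trivialisation of $K$ via $\rho = \rho_0 e^\sigma$, use integrability $\dx_H\rho = (X+\xi)\cdot\rho$ to obtain $\dx\sigma = a^*H$, and read non-degeneracy of $\omega$ off the Mukai pairing $(\rho,\bar\rho)\neq 0$. One point of confusion in your write-up: $\sigma$ is a pure $2$-form, not a mixed-degree object, so the formal logarithm collapses to $\sigma = \rho_2/\rho_0$ and there is no ``zero-form part producing a $\log z$ term.'' The logarithmic nature of $\sigma$ is not that it contains $\log z$, but that as a $2$-form it has a first-order pole of type $\frac{\dx w}{w}\wedge\dx z$ along $D$; this is what you must verify, and it comes from the local normal form of the pure spinor near the type-change locus (where $\rho$ has a non-vanishing degree-one component $\dx w$ even though $\rho_0$ vanishes). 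Your identification of the main technical step---showing $\rho_2/\rho_0$ lands in $\Omega^2_\Cnum(M,\log D)$ rather than acquiring worse singularities---is correct, and transversality of $\rho_0$ is indeed the input.
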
 
Let $\Gamma_{\sigma}\subset T_{\Cnum}M(-\log D) \oplus T^*_{\Cnum}M(\log D)$ denote the graph of $\sigma$. Then this correspondence is 
\[ L_{\mathcal{J}} = a_* \Gamma_{\sigma} := \{ a(X)+\eta \in \mathbbm{T}_{\Cnum}M \vert X+ a^*\eta \in \Gamma_{\sigma}\},  \] 
Let $L_{\mathcal{J}}$ denote the $+i$-eigenbundle of $\mathcal{J}$. 
Under B-transforms $\mathcal{J} \mapsto e^{B'} \mathcal{J} e^{-B'} (B'\in \Omega^2_{\operatorname{cl}}(M))$, the log symplectic form $\sigma$ transforms as follows: 
 \[ \sigma \mapsto \sigma + B'=(B+B')+i\omega. \] 
\begin{thm}\textbf{(Theorem 3.7 in \cite{Cavalcanti2015a})} Let $M$ be a smooth manifold. The forgetful map taking a pair $(\mathcal{J},H)$ of a closed 3-form $H$ and stable generalized complex structure $\mathcal{J}$, which is integrable w.r.t. $H$, to the pair $(Q,\mathfrak{o})$ of the real Poisson structure $Q= \operatorname{pr}_{TM}\circ \mathcal{J}\vert_{T^*M}$ and the co-orientation $\mathfrak{o}$ of the anticanonical divisor $D$ defines a bijection between gauge equivalence classes of stable generalized complex structures (w.r.t. B-transforms) and elliptic symplectic structures $\omega=Q^{-1}$ with vanishing elliptic residue and co-oriented degeneracy locus. 
\end{thm}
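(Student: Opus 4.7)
The plan is to reduce the statement to an analysis of the imaginary-part map from complex log symplectic forms to elliptic symplectic forms, using the preceding theorem. Pairs $(\mathcal{J}, H)$ are in bijection with complex log symplectic forms $\sigma = B + i\omega$ for the anticanonical divisor $D$ satisfying $\dx \sigma = a^*H$. A B-transform with closed $B'$ acts by $\sigma \mapsto \sigma + B'$ and preserves $H$, so the imaginary part $\omega$ is gauge-invariant while $B$ is determined only modulo smooth closed 2-forms. Thus, up to gauge equivalence, $\sigma$ (together with the anticanonical divisor) reduces to its imaginary part $\omega$, and the theorem amounts to characterising the elliptic symplectic forms that arise in this way.

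I would first verify well-definedness of the forward map $[\mathcal{J}] \mapsto (\omega, \mathfrak{o})$. The Lie algebroid morphism $\iota: TM(-\log\abs{D})\otimes\Cnum \to T_\Cnum M(-\log D)$ lets us pull $\sigma$ back to a complex-valued elliptic 2-form whose imaginary part is $\omega$; closedness and non-degeneracy on the elliptic tangent bundle transfer directly from $\sigma$. To check $\res_e \omega = 0$, I would work locally with the standard model $\sigma = \dx \log z \wedge \alpha + \beta$ for $\alpha, \beta$ smooth; setting $z = re^{i\theta}$ gives $\dx \log z = \dx\log r + i\, \dx\theta$, so
\[ \omega = \operatorname{Im}(\sigma) = \dx\log r \wedge \operatorname{Im}(\alpha) + \dx\theta \wedge \operatorname{Re}(\alpha) + \operatorname{Im}(\beta), \]
which contains no $\dx\log r \wedge \dx\theta$ component at leading order, so the elliptic residue $\iota^*_D(i_{r\party{}{r}}i_{\party{}{\theta}}\omega)$ vanishes. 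The co-orientation of $D$ comes for free from the complex structure on the normal bundle of the anticanonical divisor.

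For surjectivity, given $(\omega, \mathfrak{o})$ with $\res_e\omega = 0$, I would first use the cited fact that a co-oriented elliptic divisor lifts uniquely up to isomorphism to a complex divisor $(U, s)$. Inverting the local formula: writing $\omega$ locally as $\dx\log r \wedge \alpha_1 + \dx\theta \wedge \alpha_2 + \alpha_3$ (the residue hypothesis precludes a $\dx\log r \wedge \dx\theta$ term), I would set $B = \dx\log r \wedge \alpha_2 - \dx\theta \wedge \alpha_1 + \beta$ for any smooth $\beta$, so that $\sigma = B + i\omega$ takes the local form $\dx\log z \wedge \alpha + (\text{smooth})$ and is therefore complex log. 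Since the singular parts of $B$ are determined pointwise by $\omega$, local choices differ on overlaps by smooth 2-forms, and a partition of unity produces a global $B$; the closed 3-form $H$ is then defined by $\dx \sigma = a^*H$, and the preceding theorem yields the pair $(\mathcal{J}, H)$. Injectivity then follows from the same local analysis: two complex log symplectic forms with the same imaginary part differ by a smooth real 2-form that is closed (since both satisfy $\dx\sigma = a^*H$ with the same $H$), hence a legitimate gauge parameter.

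The main obstacle will be the global patching step, namely guaranteeing that the $\dx \sigma$ assembled from local data really is the pullback under $a^*$ of a smooth closed 3-form $H$ on $M$, rather than an elliptic form with residual logarithmic singularities. Equivalently, one must check that the elliptic cohomology class of $\dx B$ lies in the image of $a^*$; this should follow from the vanishing of $\res_e \omega$ together with the closedness of $\omega$ in the elliptic de Rham complex, but is where the geometric content of the theorem truly lives.
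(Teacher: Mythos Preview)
The paper does not prove this statement: it is quoted from \cite{Cavalcanti2015a} as background, and the paper only adds a short explanatory paragraph afterwards (that $\omega$ has vanishing elliptic residue as the imaginary part of a complex log form, is invariant under B-transforms, and determines $\sigma$ up to addition of a smooth 2-form). Your proposal is essentially an expansion of precisely that paragraph, so the approach is the same as what the paper sketches.

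There is, however, a slip in your injectivity argument. You write that two complex log symplectic forms $\sigma_1, \sigma_2$ with the same imaginary part ``both satisfy $\dx\sigma = a^*H$ with the same $H$'', and deduce that $B_1 - B_2$ is closed. That is not justified: the theorem concerns gauge equivalence classes of \emph{pairs} $(\mathcal{J},H)$, and a priori two such pairs mapping to the same $(\omega,\mathfrak{o})$ may have different $H_1, H_2$. The correct argument is that $B_1 - B_2$ is smooth (this is the content of ``$\sigma$ is determined by $\omega$ up to smooth 2-forms''), and then $\dx(B_1 - B_2) = H_1 - H_2$ as smooth forms, so the B-transform by the (not necessarily closed) smooth 2-form $B_2 - B_1$ takes $(\mathcal{J}_1, H_1)$ to $(\mathcal{J}_2, H_2)$. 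This is a gauge equivalence in the sense of the theorem, which allows non-closed B-field shifts acting simultaneously on $\mathcal{J}$ and $H$; the paper's earlier restriction to $B' \in \Omega^2_{\operatorname{cl}}(M)$ is for the situation where $H$ is held fixed.

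Your identification of the genuine obstacle in the surjectivity direction---showing that the locally-patched $\dx\sigma$ is the image under $a^*$ of an honest smooth 3-form---is accurate, and is indeed the substantive step proved in \cite{Cavalcanti2015a}.
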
 
So a stable generalized complex structure $\mathcal{J}$ corresponds to a complex log symplectic form $\sigma$ whose imaginary part is $\omega$. As the imaginary part of a complex log form, $\omega$ has vanishing elliptic residue, and as we can see above, $\omega$ is invariant under B-transforms. A complex log form like $\sigma$ is determined by its imaginary part up to the addition of smooth 2-forms. 

Using these two theorems, stable generalized complex structures and elliptic symplectic forms with vanishing elliptic residue are frequently treated interchangeably in this text. 

\subsection{Generalized complex branes in stable generalized complex manifolds} 
Generalized complex branes are a class of natural submanifolds of a generalized complex manifold. 
There are several similar, but non-equivalent definitions for branes carrying complex line bundles, compare for example \cite{Gualtieri2011} and \cite{Collier2014}. These two definitions both involve a complex line bundle supported on the submanifold; there is a simpler definition involving just the submanifold equipped with a smooth two-form which includes both concepts.  This definition has previously been used by \cite{Cavalcanti2015a,Cavalcanti2009} and others: 

\begin{defn} \label{def:gcbrane}
A \emph{generalized complex brane} in a generalized complex manifold $(M,H,\mathcal{J})$ is a pair $(Y,F)$ of a submanifold $\iota: Y\hookrightarrow M$ and a two-form $F\in \Omega^2(Y)$ such that 
\begin{itemize} 
\item $dF=\iota^*H$ 
\item $\tau_F=\{X+\xi \in TY\oplus T^*M\vert_Y \text{ s.t. } \iota^*\xi=i_X F\}\subset \DTM\vert_Y$ is preserved by $\mathcal{J}$: 
\[ \mathcal{J}(\tau_F)=\mathcal{J} \] 
\end{itemize} 
\end{defn} 

In the standard examples of symplectic and complex manifolds, generalized complex branes are known: Complex branes are precisely complex submanifolds equipped with closed $(1,1)$ forms. Half-dimensional branes in symplectic manifolds are Lagrangian submanifolds with zero two-forms (or closed two-form after $B$-transform). There are also higher-dimensional coisotropic branes called \emph{coisotropic A-branes}. Details can for example be found in \cite{Gualtieri2011}. 

Now, since stable generalized complex manifolds are generically symplectic, their half-dimensional branes will be generically Lagrangian w.r.t. the elliptic symplectic form $\omega$. The aspects that sets them apart from branes in pure symplectic manifolds are their intersection with the anticanonical divisor $D$, as well as generically non-zero $F=\iota^*B$ (where $\sigma = B+i\omega,\ \iota$ inclusion of brane). 

\begin{prop}\textbf{(Proposition 3.42 in \cite{Cavalcanti2015a})} Any submanifold $L\subset M$ in a stable generalized complex manifold $(M,\mathcal{J})$ which is transverse to the anticanonical divisor $D$ and Lagrangian for the elliptic symplectic structure underlying $\mathcal{J}$ inherits a smooth 2-form $F=\iota^*B$ making it into a generalized complex brane. 
\end{prop}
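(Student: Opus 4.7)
The plan is to verify the three defining properties of a generalized complex brane (Definition \ref{def:gcbrane}) for the pair $(L, F = \iota^*B)$: smoothness of $F$ as an ordinary 2-form on $L$, the compatibility $dF = \iota^*H$, and $\mathcal{J}$-invariance of the distribution $\tau_F$. Transversality $L \pitchfork D$ combined with the co-orientation of $D$ (which is built into the stable generalized complex data by Theorem 3.7) induces compatible complex-divisor and elliptic-divisor structures on $L$ with degeneracy locus $Z = L \cap D$, so that the pullbacks $\iota^*\sigma$, $\iota^*B$ and $\iota^*\omega$ are well-defined as a complex-log 2-form and two real elliptic 2-forms on $L$, respectively.

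The first step is smoothness of $F$. The Lagrangian hypothesis reads $\iota^*\omega = 0$ as an elliptic form on $L$, so $\iota^*\sigma = \iota^*B$ is purely real as a complex-log form on $L$. A local analysis in adapted polar coordinates $(r, \theta, y_i)$ near $Z$, using $\frac{dz}{z} = d\log r + i\, d\theta$ together with the order-by-order vanishing of $\iota^*\omega$ in the normal variable $r$, shows that the logarithmic singular parts of $\iota^*B$ cancel, leaving the smooth form $F = \iota^*B$. The second step, $dF = \iota^*H$, follows by applying $d$ to $\sigma = B + i\omega$, using $d\sigma = a^*H$ from Theorem 3.2, taking real parts, and pulling back to $L$.

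For $\mathcal{J}$-invariance of $\tau_F$, I would argue first on the open dense subset $L \setminus Z$. There $\omega$ is a smooth symplectic form and $B$ is smooth, so $\mathcal{J}$ is the $B$-transform $e^B \mathcal{J}_\omega e^{-B}$ of the genuine symplectic structure. In this region $\tau_F = e^B(TL \oplus N^*L)$, and $\mathcal{J}$-invariance reduces to the standard Lagrangian fact $\mathcal{J}_\omega(TL \oplus N^*L) = TL \oplus N^*L$. By the first step $\tau_F$ is a smooth subbundle of $\DTM|_L$ and $\mathcal{J}$ is smooth on $M$, so the closed condition $\mathcal{J}(\tau_F) = \tau_F$ extends from $L \setminus Z$ to all of $L$ by continuity.

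The main obstacle is the first step: making the cancellation of the singular terms of $\iota^*B$ precise requires a careful order-by-order comparison of the complex-log and elliptic-log structures near $Z$, in particular keeping track of how vanishing of the elliptic coefficients of $\iota^*\omega$ forces the conjugate coefficients of $\iota^*B$ to be divisible by the appropriate power of $r$ in the Cartesian normal coordinates. It is here that the full strength of the Lagrangian condition (as opposed to mere isotropy) is essential; once smoothness is settled, the remaining two steps are largely formal.
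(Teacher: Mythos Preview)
The paper does not actually prove this proposition; it is quoted as Proposition~3.42 of \cite{Cavalcanti2015a} and used as background for the subsequent discussion of branes, so there is no in-paper argument to compare against.

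Your outline is sound. Steps~2 and~3 are indeed essentially formal once step~1 is in hand: $dF=\iota^*H$ follows from $d\sigma=a^*H$ by taking real parts and pulling back, and $\mathcal J$-invariance of $\tau_F$ on the dense open set $L\setminus Z$ is precisely the Lagrangian brane condition for the $B$-transformed symplectic structure, which then propagates to all of $L$ by continuity once $F$ is known to be smooth.

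Your instinct that step~1 carries the content is right, but the phrase ``order-by-order vanishing of $\iota^*\omega$'' undersells what is actually needed. In adapted Cartesian normal coordinates $w=a+ib$ for the induced divisor on $L$, imposing $\mathrm{Im}(\iota^*\sigma)=0$ as an elliptic form leads (already in the four-dimensional local model, with $L$ the graph of smooth functions over the $(a,b)$-plane) to an identity of the shape $aP=bQ$ for smooth $P,Q$ built from first derivatives of the embedding, and smoothness of $\iota^*B$ amounts to $aQ+bP$ being \emph{divisible by $r^2=a^2+b^2$ in $C^\infty$}. This does follow, via the Hadamard-type factorisation $aP=bQ\Rightarrow P=bR,\ Q=aR$ for some smooth $R$, whence $aQ+bP=r^2R$; but a Taylor-order argument alone would only give $aQ+bP\in\mathfrak m^2$, which is strictly weaker than divisibility by $r^2$ (e.g.\ $ab\in\mathfrak m^2$ but $ab/r^2$ is not smooth). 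So in writing up step~1 you should make the divisibility argument explicit rather than appeal to order of vanishing.
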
 
Note that because $L\pitchfork D$, the elliptic divisor on $M$ pulls back to form an elliptic divisor on $L$, so $\omega$ pulls back to $L$ as an elliptic form, and it makes sense to demand that this pullback be zero. 

The elliptic cotangent bundle of any manifold equipped with an elliptic divisor carries a natural elliptic symplectic structure, defined in the same way as for the ordinary cotangent bundle: 
$T^*M(\log \abs{D})$ has  a pullback elliptic divisor with singular locus $T^*M(\log \abs{D})\vert_D$, and the natural elliptic symplectic form is the derivative of the tautological elliptic one-form. 
Thus there is the following natural Lagrangian neighbourhood theorem for Lagrangian generalized complex branes intersecting the degeneracy locus transversely: 
\begin{thm}\label{thm:lnbhd1}\textbf{(Theorem 3.38 in \cite{Cavalcanti2015a})} If $(M,D,\omega)$ is an elliptic symplectic manifold and $L$ a compact Lagrangian submanifold transverse to $D$, there exists a tubular neighbourhood of $L$ which is elliptic symplectomorphic to a tubular neighbourhood of the zero section in $T^*L(\log \abs{L\cap D})$ equipped with the natural elliptic symplectic form on the elliptic cotangent bundle. 
\end{thm}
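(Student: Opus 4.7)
The plan is to adapt the classical Weinstein Lagrangian tubular neighbourhood theorem to the elliptic setting via Moser's trick. The strategy proceeds in two stages: first, construct a diffeomorphism between a neighbourhood of $L$ in $M$ and a neighbourhood of the zero section in $T^*L(\log\abs{L\cap D})$ that intertwines the two elliptic divisors and identifies the elliptic symplectic forms along $L$; second, deform this diffeomorphism by an elliptic Moser isotopy to obtain a genuine elliptic symplectomorphism.

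For the first stage, the hypothesis $L\pitchfork D$ is essential: it guarantees that the elliptic divisor $(R,q)$ on $M$ pulls back to an elliptic divisor $(\iota^*R,\iota^*q)$ on $L$ with vanishing locus $L\cap D$ and positive-definite normal Hessian inherited by transversality. This is exactly the elliptic divisor on $L$ appearing in $T^*L(\log\abs{L\cap D})$. Moreover, Lagrangianness in the elliptic sense, combined with transversality, shows that $TL(-\log\abs{L\cap D})$ sits as a Lagrangian sub-Lie algebroid of $TM(-\log\abs{D})|_L$, so the elliptic symplectic form yields a canonical identification of the elliptic normal bundle of $L$ with $T^*L(\log\abs{L\cap D})$. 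Choosing an auxiliary metric adapted to $D$ (for example one for which a tubular neighbourhood of $D$ splits as the total space of its normal bundle, with $q$ proportional to the radial coordinate squared), the geodesic exponential map produces a diffeomorphism $\phi$ from a neighbourhood of the zero section onto a neighbourhood of $L$ in $M$ which sends the canonical elliptic divisor on $T^*L(\log\abs{L\cap D})$ to the elliptic divisor on $M$.

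The second stage is the Moser trick in the elliptic category. Let $\omega_0$ be the canonical elliptic symplectic form on $T^*L(\log\abs{L\cap D})$ and $\omega_1=\phi^*\omega$. By construction both agree as elliptic two-forms along the zero section, so $\omega_t=(1-t)\omega_0+t\omega_1$ is a path of closed elliptic two-forms, non-degenerate after shrinking the neighbourhood. A relative Poincar\'e lemma for elliptic forms along the zero section gives an elliptic one-form $\alpha$ with $\dx\alpha=\omega_1-\omega_0$ and $\alpha|_L=0$. Solving $i_{X_t}\omega_t=-\alpha$ produces a time-dependent section $X_t$ of the elliptic tangent bundle vanishing on $L$; since elliptic vector fields are tangent to $D$ and $L$ is compact, the flow $\Phi_t$ is defined up to time $1$ on a neighbourhood of $L$ and preserves $D$. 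The standard Moser computation then gives $\Phi_1^*\omega_1=\omega_0$, and $\phi\circ\Phi_1$ is the sought elliptic symplectomorphism.

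The main technical obstacle is the elliptic relative Poincar\'e lemma: one must show that a closed elliptic two-form vanishing on the zero section of $T^*L(\log\abs{L\cap D})$ admits an elliptic primitive also vanishing there. Away from $L\cap D$ this is the classical radial homotopy on the vector bundle $T^*L$. Along $L\cap D$ one works in local elliptic coordinates $(r,\theta,\ldots)$ adapted to the singular locus and must verify that the fibrewise radial homotopy operator preserves the elliptic sheaf and produces a primitive with the right vanishing behaviour; this is the only step where the classical Weinstein argument has to be genuinely reworked to accommodate the logarithmic singularity of the forms.
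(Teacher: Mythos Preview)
Your proposal is essentially correct and follows the same Moser-trick strategy. Note first that this particular theorem is not proved in the present paper---it is quoted from \cite{Cavalcanti2015a}. The paper does, however, prove the analogous statement for log symplectic manifolds (Theorem~\ref{thm:lognbhd}), explicitly remarking that it employs ``the same techniques'' as the elliptic version, so that is the relevant comparison.

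Your overall architecture matches the paper's proof of the log analogue: identify the normal bundle of $L$ with $T^*L(\log\,\cdot\,)$ via $\omega$, then run Moser. The one substantive difference is how you obtain the primitive $\alpha$ with $\dx\alpha=\omega_1-\omega_0$ and $\alpha|_L=0$. You frame this as an elliptic relative Poincar\'e lemma and correctly flag it as the step requiring genuine work---checking that the fibrewise radial homotopy operator preserves the elliptic sheaf near $L\cap D$. The paper instead argues more abstractly: the projection $p:T^*L(\log L\cap Z)\to L$ induces an isomorphism on log cohomology (via $\iota\circ p\sim\operatorname{Id}$), so since $\iota_L^*\omega=\iota_L^*\omega_0=0$ both forms are exact on the neighbourhood, giving $\omega-\omega_0=\dx\alpha$; the condition $\iota_L^*\alpha=0$ is then arranged by replacing $\alpha$ with $\alpha-p^*\iota_L^*\alpha$. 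This sidesteps any explicit homotopy-operator computation in singular coordinates, though of course the homotopy invariance of the Lie algebroid cohomology is itself ultimately proved by such an operator. Your route is more concrete and makes the analytic content visible; the paper's is slicker but presupposes the (standard) homotopy invariance of elliptic/log cohomology for strong maps of pairs.
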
 

\section{Lagrangian branes with boundary in stable generalized complex manifolds} \label{sec:bwb}
In this section we introduce and investigate the principal objects in the focus of this text: Lagrangian branes with boundary. 

We have just presented results on generically Lagrangian submanifolds $L$ of stable generalized complex manifolds $(M^{2n}, D^{2n-2},\sigma=B+i\omega)$ which intersect the anticanonical divisor $D$ transversely, generalized complex branes in the sense of Definition \ref{def:gcbrane}. 
Now we instead consider generically Lagrangian submanifolds with boundary $(L^n,(\partial L)^{n-1})$ of $(M^{2n},D^{2n-2},\sigma)$ which intersect $D$ cleanly in their boundary. This implies that the intersection is not transverse, and in fact these submanifolds are \emph{not} generalized complex branes. 
In this section, we introduce \emph{wedge neighbourhoods} of a brane with boundary and make sense of the pullback of elliptic differential forms to logarithmic differential forms on such a brane. 

\begin{defn} \label{def:bb}
An $n$-dimensional submanifold with boundary $\iota_L: (L,\partial L)\hookrightarrow (M, D, \omega)$ is a \emph{Lagrangian brane with boundary} if  
\begin{equation} L\cap D = \partial L \text{ and } T(L\cap D)=TL\cap TD\vert_{L\cap D} \text{ (clean intersection) } \end{equation}
and 
\[ \iota_L^* \omega =0\] 
\end{defn} 
Of course a priori $\iota^* \omega$ is only defined outside $D$, but Proposition \ref{prop:incl} illustrates how to make sense of this expression on all of $M$. To prove this proposition, we first consider natural neighbourhoods of submanifolds with boundary inside the degeneracy locus: 

Let $(Y,\partial Y)\subset (M,D)$ be any submanifold with boundary in a manifold equipped with a complex (and thus an induced elliptic) divisor, intersecting $D$ cleanly in its boundary. For such manifolds, which include Lagrangian branes with boundary, there is a natural notion of local neighbourhood, although these neighbourhoods are not open submanifolds of $M$, i.e. not tubular neighbourhoods in the conventional sense: 

We can choose a tubular neighbourhood of $D$ in $M$ in such a way that a collar neighbourhood of $\partial Y$ in $Y$ defines a rank-1 subbundle in $ND$. 
\begin{defn} \label{defn:wedge}
Let $V\subset D$ be a tubular neighbourhood of $\partial Y$ in D, isomorphic to $ND$. Consider the restriction $ND\vert_V$,  a trivial rank-2 bundle. In every fibre over $\partial Y$, pick a \emph{wedge} 
\[ W^2 := \Rnum_{>0}\times \Rnum_{>0} \cup \{(0,0)\} \] 
around the 1-dimensional subspace defined by $Y$, where the tip of of wedge is the base point. Since $ND\vert_{\partial Y}$ is trivial, such a choice can be consistently made across $\partial Y$, and extend to $ND\vert_V$, to glue together to a smooth $W^2$-bundle over $V$. Let $\hat{V}$ be the image of this $W^2$-bundle inside the tubular neighbourhood of $D$ in $M$. \\ 
A \emph{wedge neighbourhood} of $(Y,\partial Y)\subset (M,D)$ consists of the smooth gluing of such a $W^2$-neighbourhood $\hat{V}$ with a tubular neighbourhood of $Y\setminus (\partial Y \times [0,1))$ inside $M\setminus D$.  
\end{defn} 
Note that such a space is not a smooth manifold, but instead has the following local type near $\partial Y$: Open neighbourhoods of points in $\partial Y$ inside the wedge neighbourhood are of the form $W^2\times \Rnum^{\dim M - 2}$. We call such spaces \emph{wedge manifolds} and equip them with a smooth structure: A map is smooth on $W^2\times \Rnum^{k-2}$ if it is smooth away from $\{0\}\times \Rnum^{k-2}$ and can be extended to a smooth map on some proper tubular neighbourhood of $\{0\}\times \Rnum^{k-2}$ in $\Rnum^k$. \\ 
In the case where the wedge neighbourhood is embedded in $M$ as above, it inherits its smooth structure from $M$. 
\begin{lem} \label{lem:smooth}
If $\iota_Y:Y\hookrightarrow M$ is a submanifold with (smooth) boundary in a manifold with a complex and induced elliptic divisor, such that  $\partial Y= Y\cap D$ and \begin{equation} T(Y\cap D)=TY\cap TD\vert_{Y\cap D}, \end{equation} we can, inside a wedge neighbourhood of an open neighbourhood of $\partial Y$ in $Y$, choose the polar coordinates $(r,\theta)$ in such a way that $r\party{}{r}$ is tangent to $Y$ in an open neighbourhood of $\partial Y$. 
\end{lem}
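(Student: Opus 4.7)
The plan is to build the tubular neighbourhood of $D$ near $\partial Y$ so that $Y$ sits inside it as a constant-angle half-plane subbundle; the radial tangency claim then follows immediately.

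First, the clean intersection hypothesis $T(Y \cap D) = TY \cap TD|_{Y \cap D}$ implies that at each $p \in \partial Y$ the projection $T_pY \twoheadrightarrow N_pD = T_pM/T_pD$ has kernel exactly $T_p \partial Y$ and hence one-dimensional image $E_p$. Because $Y$ is a manifold with boundary lying on one side of $D$, its inward-pointing tangents at $\partial Y$ select a preferred ray $E^+_p \subset E_p$, and these assemble into a smooth rank-one ray subbundle $E^+ \subset ND|_{\partial Y}$. Applying the collar neighbourhood theorem to $(Y, \partial Y)$ produces an embedding $c : \partial Y \times [0,\epsilon) \hookrightarrow Y$ with $c(p,0) = p$; its initial velocity field $v_p := \party{c}{t}|_{t=0}$ is a smooth vector field along $\partial Y$ in $M$ whose fiberwise class in $ND|_{\partial Y}$ spans $E^+$.

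Next, the idea is to construct a compatible tubular neighbourhood of $D$ in $M$. Extend a nonvanishing smooth section $s$ of $ND$ over a tubular neighbourhood $V \subset D$ of $\partial Y$, with $s(p)$ equal to the class of $v_p$ for each $p \in \partial Y$. The complex divisor equips $ND$ with a complex line bundle structure, so multiplication by $i$ gives a complementary real section $is$, and the frame $\{s, is\}$ induces polar coordinates $(r, \theta)$ on the fibers of $ND|_V$ in which the $s$-direction sits at a fixed angle $\theta_0$, freely chosen inside $(0,\pi/2)$ so that the $Y$-direction lies in the interior of the wedge $W^2$. Then build a tubular neighbourhood embedding $\Phi : U \hookrightarrow M$ on an open neighbourhood $U$ of the zero section of $ND|_V$, subject to the additional prescription
\[ \Phi(p, t\, s(p)) = c(p,t) \quad \text{for all } p \in \partial Y,\ t \in [0,\epsilon). \]
Such a $\Phi$ exists by a relative tubular neighbourhood argument: the extra data is prescribed only on an $n$-dimensional subset of $ND|_V$ and is automatically consistent with the standard normalizations ($\Phi$ equal to the identity on $D$, fiberwise derivative equal to the canonical isomorphism $ND \to ND$ along $D$), so it can be extended smoothly to a full tubular parametrization via a partition-of-unity construction.

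By construction, the preimage under $\Phi$ of a neighbourhood of $\partial Y$ in $Y$ is precisely the half-plane subbundle $\{(p, t\, s(p)) : p \in \partial Y,\ t \in [0,\epsilon)\} = \{\theta = \theta_0,\ 0 \leq r < \epsilon\}$ over $\partial Y$. Hence $Y$ is locally a constant-$\theta$ ray bundle, and $r\party{}{r}$, which points along the $s$-direction at every point where $\theta = \theta_0$, is tangent to $Y$ throughout this neighbourhood. The principal technical point is the relative tubular neighbourhood extension in the second step: one must verify that the collar data $(p, t) \mapsto c(p,t)$ is first-order compatible with a genuine tubular parametrization of $D$ in $M$, and this is exactly where the clean intersection hypothesis is needed, since it ensures that $v_p$ is transverse to $T_pD$ everywhere on $\partial Y$, making $s$ a nowhere-vanishing section of $ND|_{\partial Y}$ onto which a tubular neighbourhood can be adapted.
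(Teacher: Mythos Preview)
Your argument is correct, but it is organised quite differently from the paper's. The paper works directly with the complex coordinate $z = re^{i\theta}$ defining the divisor, and exploits the freedom to multiply $z$ by a nowhere-vanishing complex function. Concretely: first rotate $z$ by a phase depending only on the $D$-coordinates so that the inward direction of $Y$ at $\partial Y$ is $\partial/\partial a$ (where $z = a+ib$); then near $D$ the brane is a graph $\theta = \lambda(a,y^i)$ with $\lambda \to 0$ as $a \to 0$; finally multiply $z$ by $e^{-i\lambda}$ to straighten $Y$ to $\{\theta' = 0\}$. Your approach instead builds a tubular neighbourhood of $D$ adapted to a chosen collar of $\partial Y$ in $Y$, so that $Y$ is already a constant-$\theta$ half-plane subbundle, and then reads off the radial tangency.

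What each buys: the paper's argument is explicit about \emph{which} freedom in the divisor is being used (isomorphism of complex divisors via multiplication by units), which is conceptually important later when one needs these $(r,\theta)$ to interact with the elliptic structure. Your argument is cleaner geometrically and avoids any coordinate computation. The one place you are light on detail is the ``relative tubular neighbourhood'' step: prescribing $\Phi(p, t\,s(p)) = c(p,t)$ along the whole half-plane bundle over $\partial Y$, not just to first order, is exactly where the work is, and if you unwind what it takes to produce such a $\Phi$ starting from an arbitrary tubular neighbourhood $\Phi_0$, you essentially recover the paper's rotation (the needed fibrewise diffeomorphism of $ND|_V$ fixing the zero section to first order and sending the straight rays to $\Phi_0^{-1}\circ c$ is precisely a $\theta$-shear). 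So your partition-of-unity remark undersells the step; a cleaner justification would be to choose a metric making the collar curves geodesics orthogonal to $D$, or simply to invoke the paper's rotation at that point.
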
 
\begin{proof} 
Since a complex divisor is given by a transversely vanishing section of a complex line bundle, we can locally describe it by a complex function $z=r e^{i\theta}=a+ib$, which is however only defined up to multiplication by a nowhere vanishing complex function $g=\abs{g} e^{i\sigma}$, where $\abs{g}$ is a smooth map from an open neighbourhood of $D$ to the positive real numbers and $\sigma$ a smooth map to $S^1$. 
In addition to the polar coordinates $(r,\theta)$ we choose coordinates $y_3,\dots,y_{2n}$ to describe a full tubular neighbourhood of $D$. 
By multiplying $z$ by $e^{\sigma}$, where $\sigma$ only depends on the $y^i$, we can always rotate $z$ so that $\left.\party{}{a}\right.\vert_{\partial N}$ is tangent to $Y$ and inward-pointing. Then in a small neighbourhood of $D$, one of the equations determining $Y$ is
\[ \theta= \lambda(a, y^i), \] 
where $\lambda$ is a smooth function with $\lim_{a\to 0}\lambda(a,y^i)=0$, so $\abs{\lambda}<\frac{\pi}{4}$ in some neighbourhood of $D$. (If we choose the $y^i$ correctly, the other equations determining $Y$ are $r=\sqrt{a^2+\beta^2(a,y^i)}$, $\beta$ a smooth function, of the form $y^j=0$ for $j>k$, and $\lambda$ only depends on $y^i, i\leq k$.) 
Clearly $\lambda$ can be extended to a small neighbourhood of $Y$ simply as a constant function in $b$. On the intersection of the tubular neighbourhood of $D$ and some wedge neighbourhood of $\{b=0\}$ the transformation 
\[ z \mapsto e^{-i \lambda(a,y^i)}z, \text{ i.e } \theta \mapsto \theta'=\theta - \lambda(a, y^i) \] 
is a diffeomorphism which takes $Y$ to $\{\theta'=0\}$, so $\party{}{a'}, a'=r\cos (\theta')$ is tangent to $Y$ in that neighbourhood, as is $\left.r\party{}{r}\right\vert{Y}=\left.a' \party{}{a'}\right\vert_Y$. 
\end{proof} 

\begin{prop} \label{prop:incl}
Let $\iota_Y:Y\hookrightarrow M$ be a submanifold with (smooth) boundary in a manifold with a complex and induced elliptic divisor, such that  $\partial Y= Y\cap D$ and \begin{equation} T(Y\cap D)=TY\cap TD\vert_{Y\cap D}. \end{equation}
Then the elliptic divisor $(R,q)$ on $M$ induces morphisms 
\begin{align}  
\iota_{Y,*}: TY(-\log Y\cap D) &\rightarrow TM(-\log \abs{D})\vert_Y \\ 
\iota_Y^*: \Omega^{k}(M,\log \abs{D}) &\rightarrow \Omega^{k}(Y,\log N\cap D),  \end{align}
where $TY(-\log Y\cap D)$ is the real logarithmic tangent bundle for $Y\cap D$ inside $Y$, and $\Omega^{k}(Y,\log Y\cap D)$ the real logarithmic differential forms. 
\end{prop}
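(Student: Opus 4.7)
The plan is to use the adapted polar coordinates supplied by Lemma \ref{lem:smooth} to define both morphisms explicitly on a neighbourhood of $\partial Y$, and then to give an intrinsic description that certifies coordinate independence. Away from $\partial Y = Y\cap D$ the maps reduce to the ordinary pushforward of tangent vectors and pullback of smooth forms, so the entire content sits in a neighbourhood of the boundary.

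To construct $\iota_{Y,*}$, apply Lemma \ref{lem:smooth} to produce, near each point of $\partial Y$, coordinates $(r,\theta,y^1,\ldots,y^{2n-2})$ on $M$ in which $D=\{r=0\}$, $Y$ is cut out by $\theta=0$ together with $y^j=0$ for $j>n-1$, and $r\party{}{r}$ is tangent to $Y$. Writing $a'=\iota_Y^* r$ for the induced defining function of $\partial Y\subset Y$, the bundle $TY(-\log(Y\cap D))$ is locally generated over $C^\infty(Y)$ by $a'\party{}{a'},\party{}{y^1},\ldots,\party{}{y^{n-1}}$, while $TM(-\log\abs{D})\vert_Y$ is locally generated by $r\party{}{r}\vert_Y,\party{}{\theta}\vert_Y,\party{}{y^i}\vert_Y$. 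Sending $a'\party{}{a'}\mapsto r\party{}{r}\vert_Y$ and $\party{}{y^j}\mapsto\party{}{y^j}\vert_Y$ for $j\leq n-1$ and extending $C^\infty(Y)$-linearly defines $\iota_{Y,*}$ locally; note the identity $r\party{}{r}\vert_Y=a'\party{}{a'}$ which follows from $r\vert_Y=a'$.

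For $\iota_Y^*$, use the dual map: for $\alpha\in\Omega^k(M,\log\abs{D})$ and log tangent vectors $v_1,\ldots,v_k$ on $Y$, set $(\iota_Y^*\alpha)(v_1,\ldots,v_k):=\alpha(\iota_{Y,*}v_1,\ldots,\iota_{Y,*}v_k)$. To verify that the output lies in $\Omega^k(Y,\log(Y\cap D))$ it suffices to evaluate on the local generators $dr/r,d\theta,dy^i$ of $\Omega^\bullet(M,\log\abs{D})$: one computes $\iota_Y^*(dr/r)=da'/a'$, $\iota_Y^*(d\theta)=0$ because $\theta$ vanishes identically on $Y$ in the adapted coordinates, and $\iota_Y^*(dy^i)=dy^i\vert_Y$, all of which are sections of $\Omega^\bullet(Y,\log(Y\cap D))$. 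Thus every elliptic $k$-form on $M$ pulls back to a $C^\infty(Y)$-linear combination of wedges of $da'/a'$ with smooth forms on $Y$.

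The main technical obstacle is coordinate independence, since Lemma \ref{lem:smooth} involves a choice of complex defining function $z=re^{i\theta}$ for $D$ and of the rotation $e^{-i\lambda(a,y^i)}$. I would handle this by observing that $TY(-\log(Y\cap D))$ admits the intrinsic description as the sheaf of vector fields on $Y$ preserving $\iota_Y^*I_q$: although $\iota_Y^*q$ vanishes only quadratically along $\partial Y$, the elementary identity $Z(f^2)=2fZ(f)$ shows that for any smooth $f$ with clean zero locus the vector fields preserving $(f)$ and those preserving $(f^2)$ coincide, so that preserving the quadratic ideal on $Y$ is the same as preserving the linear ideal of $\partial Y$. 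Consequently $\iota_{Y,*}$ can be characterised intrinsically as the map induced by restricting to $Y$ any section of $TM(-\log\abs{D})$ defined near $Y$ whose anchor in $TM$ is tangent to $Y$; this description makes no reference to the chart, and $\iota_Y^*$ inherits well-definedness through its dual formulation. Both morphisms are then globally defined and the proposition follows.
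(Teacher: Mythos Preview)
Your argument is correct and follows the same overall strategy as the paper: both proofs reduce the question to a neighbourhood of $\partial Y$, use Lemma~\ref{lem:smooth} to obtain adapted polar coordinates in which $r\party{}{r}$ is tangent to $Y$, and send the log Euler field $x\party{}{x}$ (your $a'\party{}{a'}$) to the elliptic Euler field $r\party{}{r}\vert_Y$.

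The only substantive difference is in how coordinate independence is argued. The paper does this in one line by invoking the standard facts that $x\party{}{x}\vert_{\partial Y}$ is independent of the choice of boundary defining function and that $r\party{}{r}\vert_D$ is determined by the elliptic divisor alone; the assignment on $\partial Y$ is therefore canonical, and Lemma~\ref{lem:smooth} is used only to check smoothness. You instead give an intrinsic description of $\iota_{Y,*}$ as ``extend to an elliptic vector field with anchor tangent to $Y$, then restrict''. This is valid, but the uniqueness step---that two such extensions restrict to the same section of $TM(-\log\abs{D})\vert_Y$---is left implicit; it follows because on the dense open $Y\setminus\partial Y$ the anchor is an isomorphism, so equal anchors force equal elliptic lifts there, hence everywhere by continuity. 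Your ideal argument via $Z(f^2)=2fZ(f)$ is a correct way to identify $TY(-\log\partial Y)$ with the vector fields preserving $\iota_Y^*I_q$, though it is not actually needed once the extension--restriction description is in hand. The paper's route is shorter; yours has the advantage of yielding a description of $\iota_{Y,*}$ that is manifestly global from the start, and of treating the pullback $\iota_Y^*$ more explicitly than the paper does.
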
 
This ensures that that the pullback $\iota^*_L \omega$ in Definition \ref{def:bb} makes sense. 
\begin{proof} 
It is sufficient to construct $\iota_{Y,*}: TY(-\log Y\cap D)\rightarrow TM(-\log \abs{D})$ on $\partial Y= Y\cap D$ and to show that this extends the ordinary pushforward map on the interior smoothly. 

Locally, the elliptic divisor is given by a function $r^2$ with $D=\{r=0\}$ (As seen above, $r^2$ is only determined up to multiplication with a positive real function). 

The function $x=\sqrt{\iota^* r^2}$ is smooth on $(Y,\partial Y)$, more particularly, it is a defining function for $\partial Y$.
 It is a well-known fact from log geometry that $\left.x \party{}{x}\right\vert_{\partial Y}\in \Gamma(TY(-\log \partial Y)\vert_{\partial Y})$ is independent of the choice of defining function. Similarly, in elliptic geometry $\left.r \party{}{r}\right\vert_{D}$ only depends on the elliptic divisor, not the function $r^2$. So we set: 
\[ \iota_*\left(x\party{}{x}\biggm\vert_{\partial Y}\right) = \left. r \party{}{r}\right\vert_{Y\cap D}\] 
Then $\iota_*: TY(-\log \partial Y)\rightarrow TM(-\log \abs{D})$ is smooth: If we choose $(r,\theta)$ as in Lemma \ref{lem:smooth}, $x\party{}{x}$ extends to $r\party{}{r}$ on a wedge neighbourhood of a neighbourhood of $\partial Y$ in $D$. This allows us to see that $i_*$ clearly pushes log vector fields on $Y$ forward to restrictions of elliptic vector fields to $Y$ in a smooth manner. 
\end{proof} 

\begin{ex} \textbf{(Standard local example)} 
According to Theorem 3.21 in \cite{Cavalcanti2015a}, if $M$ is a stable generalized complex manifold with anticanonical divisor $D$, the associated complex logarithmic symplectic form can be written in local coordinates $(w,z, q_3,\dots,q_n,p_3,\dots,p_n )$ ($w,z$ complex coordinates, $q_i,p_i$ real) around any point in $D$ as 
\[ \sigma = \frac{\dx w}{w}\wedge \dx z + i \sum_j \dx p_j \wedge \dx q_j \]
If we write $w=r e^{i\theta}, z=x+iy$, we obtain 
\[ \omega= \operatorname{Im} (\sigma)= \frac{\dx r}{r}\wedge \dx y + \dx \theta \wedge \dx x + \sum_j \dx p_j \wedge \dx q_j. \] 
Then the following planes in $\Rnum^{2n}$ all define Lagrangian branes with boundary in the sense of Definition \ref{def:bb}: 
\[ \{ y=\text{const},\ \theta=\text{const},\ \iota^*(\dx p_j \wedge \dx q_j) =0 \text{ (e.g. }p_j=\text{const)}\} \] 
\end{ex} 

\begin{prop} \label{prop:resb}
If the stable generalized complex structure is given by the complex log symplectic form $\sigma=B+i\omega$, a Lagrangian brane with boundary carries a natural logarithmic two-form $F=\iota^* B$ with non-vanishing residue. 
\end{prop}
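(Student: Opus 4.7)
The plan is to work in the standard local form for $\sigma$ near $\partial L$, align coordinates with $L$ via Lemma \ref{lem:smooth}, and then read the residue off directly from the Lagrangian condition.

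First I observe that both $B$ and $\omega$ are elliptic log two-forms on $M$: writing $w=re^{i\theta}$ and $z=x+iy$ in the standard local form $\sigma=\frac{dw}{w}\wedge dz + i\sum_j dp_j\wedge dq_j$ from the standard example gives
\[
B=\frac{dr}{r}\wedge dx - d\theta\wedge dy, \qquad \omega=\frac{dr}{r}\wedge dy + d\theta\wedge dx + \sum_j dp_j\wedge dq_j,
\]
so both lie in $\Omega^2(M,\log\abs{D})$. Proposition \ref{prop:incl} then guarantees that $F:=\iota^*B\in\Omega^2(L,\log\partial L)$ is a well-defined logarithmic two-form, settling the first half of the claim.

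Next I apply Lemma \ref{lem:smooth} to arrange $\theta=0$ on a neighbourhood of $\partial L$ in $L$, so that $a:=\iota^*r$ is a real defining function for $\partial L$, $\iota^*d\theta=0$, and $\iota^*(\tfrac{dr}{r})=\tfrac{da}{a}$. A direct pullback then gives
\[
F=\iota^*B=\frac{da}{a}\wedge d(\iota^*x), \qquad \operatorname{res}(F)=\iota_{\partial L}^*\,d(\iota^*x)=d\bigl(\iota^*x|_{\partial L}\bigr).
\]
Writing local coordinates $(a,s_1,\dots,s_{n-1})$ on $L$, the Lagrangian condition reads $\iota^*\omega=\tfrac{da}{a}\wedge d(\iota^*y)+\iota^*(\sum_j dp_j\wedge dq_j)=0$, and matching the $1/a$-singular part against the smooth remainder forces $\partial_{s_i}(\iota^*y)|_{a=0}=0$ for each $i$, i.e.\ $\iota^*y|_{\partial L}$ is locally constant. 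Moreover, since $\omega$ restricted to vectors tangent to $D$ is exactly $\sum_j dp_j\wedge dq_j|_D$, the Lagrangian condition also tells me that $T_p\partial L\subset T_pD$ is isotropic for this transverse symplectic form.

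The crux is to show $d(\iota^*x|_{\partial L})$ is nowhere zero. I argue by contradiction: if it vanished at some $p\in\partial L$, then combined with $d(\iota^*y|_{\partial L})=0$ the tangent space $T_p\partial L$ would lie inside $\operatorname{span}\{\partial_{q_j},\partial_{p_j}\}_j$, i.e.\ entirely inside the $2(n-2)$-dimensional transverse symplectic subspace. Being isotropic there, $T_p\partial L$ would have dimension at most $n-2$, contradicting $\dim T_p\partial L=n-1$. This dimension count is the heart of the argument and is precisely where the boundary-intersection condition of the brane (as opposed to transverse intersection) forces the residue to be non-vanishing.
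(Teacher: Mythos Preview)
Your argument is correct, and it takes a genuinely different route from the paper's.

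The paper defers the proof of this proposition to the end of Section~\ref{sec:nbhdbdy} and invokes the wedge neighbourhood normal form (Proposition~\ref{prop:ellnbhd}), which in turn rests on the real oriented blow-up to a log symplectic manifold and the log Lagrangian neighbourhood theorem (Theorem~\ref{thm:lognbhd}). In those normal-form coordinates one has $\res(\iota^*B)=\iota^*dx|_{\partial L}$, and the non-vanishing is read off from the fact that $\tilde\omega\colon N\tilde L\to T^*\tilde L(\log\partial\tilde L)$ is an isomorphism sending $\partial_\theta\mapsto dx$. Your approach bypasses all of that machinery: you use only the standard local form for $\sigma$ from \cite{Cavalcanti2015a} together with Lemma~\ref{lem:smooth}, and then the non-vanishing follows from a clean linear-algebra dimension count on $T_p\partial L$. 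This makes the proposition logically independent of Sections~\ref{sec:lnbhd} and \ref{sec:blow}, which is a real gain. Conversely, the paper's argument, once the normal form is in hand, packages the non-vanishing as an immediate consequence of non-degeneracy of $\omega$, which is conceptually pleasing.

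One small point worth tightening: applying Lemma~\ref{lem:smooth} rotates the divisor coordinate $w\mapsto e^{-i\lambda}w$, which perturbs the standard form of $\sigma$ by the smooth term $-i\,d\lambda\wedge dz$. So after the adjustment, $B$ and $\omega$ each pick up an extra smooth two-form (namely $-d\lambda\wedge dy$ and $d\lambda\wedge dx$ respectively). This does not affect $\res(\iota^*B)=d(\iota^*x)|_{\partial L}$, nor your contradiction step, since on the subspace $\operatorname{span}\{\partial_{q_j},\partial_{p_j}\}$ the correction $d\lambda\wedge dx$ vanishes and $\sum_j dp_j\wedge dq_j$ remains non-degenerate there; but you should acknowledge the perturbation rather than write $B$ and $\omega$ as if they were still exactly in standard form.
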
 
(This proposition will be proved at the end of section \ref{sec:nbhdbdy} of this paper.)

Hence, Lagrangian branes with boundary are \emph{not} generalized complex branes. This text will argue that they should nonetheless be considered when studying submanifolds of stable generalized complex manifolds, and show how they fit into a general framework of branes in stable generalized complex manifolds. Towards this aim, we will establish a correspondence of stable generalized complex manifolds and log symplectic manifolds, as well as their Lagrangian submanifolds. 

\section{Lagrangian neighbourhood theorem for log symplectic manifolds} \label{sec:lnbhd}
Let $(M,Z,\omega)$ be a real logarithmic symplectic manifold. We can prove a Lagrangian neighbourhood theorem for compact Lagrangians that intersect the singular locus transversely, employing the same techniques as in the proof of Weinstein's original Lagrangian neighbourhood theorem and the version for stable generalized complex manifolds (see Theorem \ref{thm:lnbhd1} above and \cite{Cavalcanti2015a}). As far as we are aware, this proof has not previously appeared in the literature. 
\begin{prop} 
If $\iota_Y: Y\hookrightarrow M$ is a submanifold which intersects $Z$ transversely, there are induced morphisms 
\begin{align*}  \iota_{Y,*}:  TY(-\log Y\cap Z) &\rightarrow TM(-\log Z)  \\ 
\iota_Y^*: \Omega^{\bullet}_M(\log Z)&\rightarrow \Omega^{\bullet}_Y(\log Y\cap Z) 
\end{align*} 
\end{prop}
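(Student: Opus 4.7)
The plan is to mimic the construction in Proposition \ref{prop:incl}, but the argument is considerably cleaner because transverse intersection means $Y \cap Z$ is a genuine smooth hypersurface in $Y$ and no wedge structure is needed. Since away from $Z$ the log tangent bundle agrees with the ordinary tangent bundle and the usual pushforward and pullback are available, the only issue is to construct and check smoothness of the maps near points of $Y \cap Z$.

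First I would work locally. Choose a point $p \in Y \cap Z$ and a local defining function $f$ for $Z$ near $p$. By transversality, $\iota_Y^* f$ has nonzero differential along $Y \cap Z$ and is therefore a local defining function for $Y \cap Z$ in $Y$. Sections of $TY(-\log Y \cap Z)$ are locally generated as a $C^\infty$-module by $(\iota_Y^*f)\,\partial/\partial(\iota_Y^*f)$ together with smooth vector fields tangent to $Y$ that annihilate $\iota_Y^* f$. I would then define $\iota_{Y,*}$ by sending such vector fields tangent to $Y$ (which annihilate $\iota_Y^* f$, hence push forward to vectors annihilating $f$, hence lie in $TM(-\log Z)$) to their ordinary pushforwards, and by sending $(\iota_Y^*f)\,\partial/\partial(\iota_Y^*f)|_{Y\cap Z}$ to $f\,\partial/\partial f|_{Y \cap Z}$.

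To see that this is well-defined, I would invoke the standard fact in logarithmic geometry that $f\,\partial/\partial f|_Z \in \Gamma(TM(-\log Z)|_Z)$ is intrinsic: if $\tilde f = g f$ with $g$ nowhere vanishing, then $\tilde f\,\partial/\partial \tilde f - f\,\partial/\partial f$ vanishes on $Z$. The same fact applied downstairs on $Y$ shows the prescription above does not depend on the choice of $f$. Smoothness of $\iota_{Y,*}$ on a neighbourhood of $Y \cap Z$ is immediate in coordinates adapted to transversality: choose coordinates $(x,y_1,\dots,y_{2n-1})$ on $M$ with $Z = \{x=0\}$ and $Y = \{y_{k+1}=\cdots=y_{2n-1}=0\}$; in these coordinates the assignment is the obvious linear map on generators and extends smoothly across $\{x=0\}$. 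The pullback of log forms is then defined fibrewise by duality,
\[ (\iota_Y^*\alpha)_p(X_1,\dots,X_k) = \alpha_{\iota_Y(p)}(\iota_{Y,*}X_1,\dots,\iota_{Y,*}X_k), \]
for $X_j \in T_pY(-\log Y\cap Z)$, and this is manifestly a morphism of differential complexes because $\iota_{Y,*}$ intertwines the Lie algebroid anchors.

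The only subtle point — and the step I would treat most carefully — is verifying that the extension across $Y \cap Z$ is smooth and coordinate-independent; but since the intersection is transverse and of codimension one on both $M$ and $Y$, this reduces to the one-variable statement that $f \,\partial/\partial f$ pulls back to $(\iota_Y^* f)\,\partial/\partial(\iota_Y^*f)$ as residual vector fields. This is the log analogue of the elliptic computation carried out in Proposition \ref{prop:incl}, and is in fact the easier case; the harder wedge analysis of Lemma \ref{lem:smooth} has no counterpart here.
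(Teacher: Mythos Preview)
Your proposal is correct and follows essentially the same approach as the paper's proof: both note that transversality makes the pullback of a defining function for $Z$ a defining function for $Y\cap Z$, then define $\iota_{Y,*}$ by sending the residual log vector field $x\,\partial/\partial x$ to $\tilde{x}\,\partial/\partial\tilde{x}$ and extending by the ordinary pushforward, with the pullback of forms defined by duality. Your version is in fact more detailed than the paper's --- you spell out the well-definedness via the intrinsic nature of the residual vector field, the smoothness check in adapted coordinates, and the compatibility with the differential --- whereas the paper simply refers back to Proposition~\ref{prop:incl} and asserts that the same argument goes through more easily here.
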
 
\begin{proof} 
This proceeds exactly like the proof for Proposition \ref{prop:incl}: Again, we only need to consider the pushforward of $\left. x\party{}{x}\right\vert_{Y\cap Z}$, where $x$ is a defining function for $Y\cap Z$. Obviously, since $Y\pitchfork Z$, any defining function for $Z$ on $M$ will provide one for $Y\cap Z$ in $N$ via pullback. Assume $x=\iota^*\tilde{x}$. Then we can define the pushforward 
\[ \iota_{Y,*}\left(\left. \party{}{x}\right\vert_{Y\cap Z}\right) = \left. \tilde{x}\party{}{\tilde{x}}\right\vert_{Y \cap Z}, \]
which is well-defined and smooth: $\tilde{x}$ can be chosen in such a way that $\tilde{x}\party{}{\tilde{x}}$ is tangent to $Y$.  
The definition of the pullback for logarithmic forms is then obvious. 
\end{proof} 

\begin{thm}\textbf{(Lagrangian neighbourhood theorem for log symplectic manifolds)} \label{thm:lognbhd}
Let $(M,Z,\omega)$ as above and $\iota_L: L\hookrightarrow M$ a compact Lagrangian submanifold which intersects the degeneracy locus $Z$ transversely. 

Then there is a neighbourhood $(U,U\cap Z)$ of $L$ in $M$ which is isomorphic to a neighbourhood of the zero section in $T^*L(\log L\cap Z)$, i.e. there exists a diffeomorphism onto its image 
\[ \phi: (U,U\cap Z) \rightarrow T^*L(\log L\cap Z)\] 
such that $\phi^*(\omega_0)=\omega$, where $\omega_0$ is the standard log symplectic form on $T^*L(\log L\cap Z)$, and $(\phi(U),\phi(U\cap Z))$ is a tubular neighbourhood of $(L,L\cap Z)$. 
\end{thm}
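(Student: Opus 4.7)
The plan is to adapt Weinstein's Moser-trick proof to the log Lie algebroid setting, mirroring the strategy used for the elliptic case in Theorem~\ref{thm:lnbhd1}. The argument splits into two stages: first construct a candidate diffeomorphism $\phi_0$ that matches the log-symplectic data pointwise along $L$, then correct it by a Moser isotopy.

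For the linear-algebraic stage, the preceding proposition gives an injection $\iota_{L,*}:TL(-\log L\cap Z)\hookrightarrow TM(-\log Z)|_L$ whose image is an $\omega$-Lagrangian subbundle by the Lagrangian hypothesis. Non-degeneracy of $\omega$ then identifies the quotient $TM(-\log Z)|_L/TL(-\log L\cap Z)$ with $T^*L(\log L\cap Z)$. I would choose a smooth Lagrangian complement to $TL(-\log L\cap Z)$ fibrewise by standard symplectic linear algebra on the Lie algebroid fibres, globalised via a compatible metric and a partition of unity. Combined with a tubular neighbourhood of $L$ in $M$ picked compatibly with $Z$ — available because $L\cap Z$ is a transversely embedded hypersurface of $Z$, so one may take a product collar — this yields a diffeomorphism $\phi_0:(U,U\cap Z)\to (V,V\cap\pi^{-1}(L\cap Z))$ onto a neighbourhood of the zero section in $T^*L(\log L\cap Z)$, where $\pi$ denotes the bundle projection. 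By construction $\omega$ and $\phi_0^*\omega_0$ agree pointwise as log $2$-forms on $TM(-\log Z)|_L$.

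I would then run the Moser argument in the usual way: set $\omega_t=(1-t)\omega+t\phi_0^*\omega_0$; compactness of $L$ together with agreement on $L$ ensures log non-degeneracy on a smaller neighbourhood for all $t\in[0,1]$. A relative log Poincaré lemma should produce $\alpha\in\Omega^1(U,\log U\cap Z)$ with $d\alpha=\omega_0-\omega$ and $\alpha|_L=0$. Define $X_t$ by $i_{X_t}\omega_t=-\alpha$; its anchor image $a(X_t)$ is a smooth vector field tangent to $Z$ and vanishing on $L$, so the flow $\psi_t$ exists on a possibly smaller neighbourhood, preserves $Z$, and fixes $L$ pointwise. The standard Cartan computation then yields $\psi_1^*(\phi_0^*\omega_0)=\omega$, so $\phi:=\phi_0\circ\psi_1$ is the required diffeomorphism.

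The main obstacle I anticipate is the relative log Poincaré lemma. After transferring to the model $T^*L(\log L\cap Z)$ via $\phi_0$, I would use the Cartan homotopy operator $K\eta=\int_0^1\phi_s^*(i_Y\eta)\,ds$ built from the fibrewise scaling $\phi_s:(q,\xi)\mapsto(q,s\xi)$ and its Euler vector field $Y$. The key point to verify is that $K$ sends log forms to log forms: this should hold because $Y$ is a smooth log vector field on $T^*L(\log L\cap Z)$ (being tangent to the degeneracy hypersurface $\pi^{-1}(L\cap Z)$) and each $\phi_s$ is a log morphism (it fixes both the zero section and $\pi^{-1}(L\cap Z)$ setwise), so pullback and interior contraction both preserve the log complex. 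The resulting primitive automatically vanishes on $L$ because $Y$ does. With this lemma in hand, the persistence of log non-degeneracy of $\omega_t$, solvability for $X_t$, and the flow argument are routine extensions of the smooth symplectic case.
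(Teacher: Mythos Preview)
Your proposal is correct and follows essentially the same Weinstein--Moser strategy as the paper: identify the normal bundle of $L$ with $T^*L(\log L\cap Z)$ via $\omega$, transfer to a tubular neighbourhood compatible with $Z$, and run the Moser argument on $\omega_t=(1-t)\omega+t\omega_0$ using a log primitive vanishing along $L$. The only notable difference is that you spell out the relative log Poincar\'e lemma via the fibrewise-scaling homotopy operator (obtaining $\alpha|_L=0$ pointwise), whereas the paper invokes the homotopy equivalence $p:T^*L(\log L\cap Z)\to L$ on log cohomology and simply asserts one may choose $\iota_L^*\alpha=0$; your version is a more explicit justification of the same step.
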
 

\begin{proof} The proof proceeds exactly like that of the original Weinstein Lagrangian neighbourhood theorem, see for example \cite{CannasdaSilva2001}. In this case, we start by choosing a tubular neighbourhood for $(L,L\cap Z)$ whose intersection with $Z$ is a tubular neighbourhood for $L\cap Z$. 

\textbf{Claim:} The cokernel of $\iota_{L,*}$ is $TM(-\log Z)\vert_L/\operatorname{Im}(\iota_{L,*}) \cong NL$.
\begin{proof}We consider the sheaves of sections and show that they are isomorphic as locally free sheaves. 
The sheaf $\Gamma\left(TM(-\log Z)\vert_L/TL(-\log L\cap Z)\right)$ includes into $\Gamma(NL)=\Gamma(TM\vert_L/TL)$ via the anchor. \\
The inverse map is as follows: Let $X+\Gamma(TL)\in \Gamma(TM\vert_L/TL)$. We have assumed $L\pitchfork Z$, so 
\[(TL+TZ)\vert_{L\cap Z}= TM \vert_{L\cap Z},\] 
i.e. we can write $X\vert_{L\cap Z}=X'+Y$, where $X'\in \Gamma(TL\vert_{L\cap Z}), Y\in \Gamma(TZ\vert_{L\cap Z})$. 
Extend $X'$ to a section on all of $L$. Then 
\[X-X' + \Gamma(TL) = X+\Gamma(TL),\text{ and }X-X' \in \Gamma(TM(-\log Z)\vert_L).\] 
Lastly, check that the class of $X-X'$ in $\Gamma(TM(-\log Z))\vert_L/TL(-\log L\cap Z)$ does not depend on the choice of $X'$ and its extension. 
\end{proof}

Since $L\pitchfork Z$, and $Z\subset M$ a codimension-1 submanifold, there is a tubular neighbourhood $U$ of $L$ in $M$ such that $U\cap Z$ is a tubular neighbourhood of $L\cap Z$ in $Z$. Now, since $TM(-\log Z)\vert_L/\operatorname{Im}(\iota_{L,*})\cong NL$ and $\iota_L^*\omega=0$, we obtain an isomorphism 
\[ \omega: TM(-\log Z)\vert_L/\operatorname{Im}(\iota_{L,*}) \rightarrow T^*L(\log L\cap Z), \]
which maps the tubular neighbourhood $(U,U\cap Z)$ of $(L,L\cap Z)$ to a tubular neighbourhood of the zero section in $T^*L(L\cap Z)$ in such a way that $U\cap Z$ gets mapped to the fibre over $L\cap Z$. 

Thus we can now view both $\omega$ and the natural log symplectic form on $T^*L(\log L\cap Z)$, $\omega_0$, as log symplectic forms on $(U,U\cap Z)$, both of which satisfy $\iota^*_L\omega= 0=\iota^*_L \omega_0$. 
The projection $p: TL(-\log L\cap Z)\rightarrow L$ induces an isomorphism on log cohomology $p^*:H^{\bullet}_L(\log L\cap Z) \rightarrow H^{\bullet}_U(\log U\cap Z)$. 
$p$ is homotopic to the identity on $U$, so $p^*$ is an isomorphism on cohomology: Since $p\circ \iota=\operatorname{Id}_L, \iota\circ p \sim \operatorname{Id}_U$, $i^*: H^{\bullet}_U(\log U\cap Z) \rightarrow H^{\bullet}_L(L\cap Z)$ is the inverse of $p^*$ on cohomology. 
Thus, since $i^*_L\omega=i^*_L\omega_0=0$, $\omega,\omega_0$ are in the same log cohomology class on $U$ (in fact, both are trivial in cohomology on $U$). 
\[ \Rightarrow \omega - \omega_0 = \dx \alpha \text{ for some } \alpha \in \Omega^1(U,\log U\cap Z). \] 
We now consider the family of cohomologous closed log symplectic forms $\omega_t=t\omega+(1-t)\omega_0$ (these are non-degenerate on a small tubular neighbourhood of $L$ for all $t\in[0,1]$) and apply 
 the Moser argument: 
\[ X_t:= -\omega_t^{-1}(\alpha), \text{ where } \dx \alpha = \omega-\omega_0 \] 
is a well-defined logarithmic vector field, in particular it is smooth. We assume that $\iota^*_L\alpha=0$, which is clearly always possible. $L$ was assumed to be compact, so this time-dependent log vector field can be integrated to a family of diffeomorphisms $\psi_t, t\in (0,1)$ on a small neighbourhood of $L$ in $U$, which preserve $U\cap Z$. We have $\psi_t\vert_L: L\rightarrow L$, since $\iota_L^*\alpha=0$. Furthermore $\psi_0=\operatorname{Id}$, so: 
\begin{align*} 
\psi^*_t(\omega_t)&=\omega_0\ \forall t\in [0,1] 
\end{align*} 
Thus there is a neighbourhood of $(L,L\cap Z)$ in $(U,U\cap Z)$ with diffeomorphism $\psi_1^*(\omega)=\omega_0$, which proves the theorem. 
\end{proof} 

\begin{cor} 
If $L\subset M$ a compact Lagrangian such that $L\pitchfork Z$, each connected component of $L\cap Z$ lies inside a single symplectic leaf of $\omega^{-1}$ in $Z$ and is Lagrangian inside this leaf. 
\end{cor}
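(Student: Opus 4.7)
The plan is to reduce both assertions to an explicit computation in the normal form provided by Theorem \ref{thm:lognbhd}, which identifies a neighbourhood of $L$ in $M$ with a neighbourhood of the zero section in $T^*L(\log L\cap Z)$ equipped with its standard log symplectic form $\omega_0$. Working in that model, around any $p\in L\cap Z$ I would pick coordinates $x_1,\ldots,x_n$ on $L$ with $L\cap Z=\{x_1=0\}$, together with dual fibre coordinates $y_1,\ldots,y_n$ corresponding to the frame $\frac{\dx x_1}{x_1},\dx x_2,\ldots,\dx x_n$ of $T^*L(\log L\cap Z)$, so that
\[ \omega_0=\dx y_1\wedge \frac{\dx x_1}{x_1}+\sum_{i\geq 2}\dx y_i\wedge \dx x_i, \]
with $Z=\{x_1=0\}$ and $L\cap Z=\{x_1=y_1=\cdots=y_n=0\}$.

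The induced Poisson bivector $\omega_0^{-1}$ restricts on $Z$ to $\sum_{i\geq 2}\party{}{x_i}\wedge \party{}{y_i}$, which is of constant corank one, and its symplectic leaves are the level sets $\{x_1=0,\ y_1=c\}$ for $c\in\Rnum$. Consequently $L\cap Z$ lies in the leaf $\{y_1=0\}$ on a neighbourhood of $p$, and since this holds near every point of $L\cap Z$ each connected component lies globally in a single leaf. Inside such a leaf the symplectic form is $\sum_{i\geq 2}\dx y_i\wedge \dx x_i$ in coordinates $(x_2,\ldots,x_n,y_2,\ldots,y_n)$, and $L\cap Z=\{y_2=\cdots=y_n=0\}$ is a half-dimensional submanifold on which this leaf form manifestly pulls back to zero; hence it is Lagrangian in the leaf.

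The step requiring the most care is correctly identifying the symplectic foliation of $Z$ and checking that the local assertion about lying in a single leaf patches into a global one. A cleaner coordinate-free route, worth noting if the above piecing together feels ad hoc, uses the residue map: since $\iota_L^*\omega=0$ as an element of $\Omega^2(L,\log L\cap Z)$ and $\res$ commutes with pullback, one obtains $\iota_{L\cap Z}^*(\res \omega)=0$, so $T(L\cap Z)\subseteq \ker(\res \omega)$, which is precisely the tangent distribution of the symplectic foliation on $Z$. The Lagrangian property then follows from the half-dimension count together with the local description of the leaf symplectic form given above.
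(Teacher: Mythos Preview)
Your proposal is correct and essentially identical to the paper's own proof: both invoke the Lagrangian neighbourhood theorem to obtain the standard coordinates on $T^*L(\log L\cap Z)$, identify the symplectic leaves on $Z$ as $\ker(\res\omega)=\ker\dx y_1$, and then read off directly that $L\cap Z=\{x_1=0,\,y_i=0\}$ lies in a single leaf and is Lagrangian for the leaf form $\sum_{i>1}\dx x_i\wedge \dx y_i$. Your additional coordinate-free remark via $\iota_{L\cap Z}^*(\res\omega)=\res(\iota_L^*\omega)=0$ is a nice supplement, though the paper implicitly uses the same observation when it identifies the leaf distribution with $\ker(\res\omega)$.
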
 
\begin{proof} 
From the neighbourhood theorem, we obtain a tubular neighbourhood of $L$ with coordinates $(x^1,x^2,\dots x^n, y_1\dots,y_n)$ around a point of $L\cap Z$ such that $x_i$ are coordinates for $L$, $x_1$ a defining function for $Z$, $y_i$ are fibre coordinates for $T^*L(-\log L\cap Z)$, and where the log symplectic form is given by 
\[ \omega = \frac{\dx x^1}{x^1}\wedge \dx y_1 + \sum_{i>1} \dx x^i \wedge \dx y_i \] 
Clearly, the symplectic leaves of $\omega^{-1}$ are given by the integrable distribution $\ker (\res \omega)$. The intersection $L\cap Z$ in these coordinates is given by $y_i=0, x_1=0$ and thus clearly $T(L\cap Z)\subset \ker(\res \omega)=\ker \dx y_1$, so each connected component of $L\cap Z$ will lie inside a single symplectic leaf. 

The symplectic form on the symplectic leaves will clearly be given by 
\[ \omega'=\sum_{i>1} \dx x^i \wedge \dx y_i, \] 
so $L\cap Z$ will be Lagrangian inside the symplectic leaf. 
\end{proof} 

\section{Logarithmic and elliptic symplectomorphisms} \label{sec:symplecto}
First consider a compact log symplectic manifold $(M,Z,\omega)$. A \emph{diffeomorphism of the pair $(M,Z)$} is simply a diffeomorphism that preserves $Z$. These diffeomorphisms clearly form a subgroup of the diffeomorphism group of $M$ whose Lie algebra is precisely given by the logarithmic vector fields $\Gamma(TM(-\log Z))$. As a consequence, the pullback and push-forward of logarithmic forms and vector fields with respect to such \emph{logarithmic diffeomorphisms} are well-defined in a natural way, and a \emph{logarithmic symplectomorphism} is simply such a diffeomorphism $\phi$ which satisfies 
\[ \phi^*\omega= \omega. \] 

Now, for a compact elliptic symplectic manifold $(M,D,\omega)$, this is less obvious: For a general diffeomorphism of $\phi: M\rightarrow M$, and the chosen elliptic divisor $D=(R,s)$ on $M$, we can always consider the pullback divisor $\phi^*D=(\phi^*D,\phi^*s)$. This is isomorphic to the original divisor, and, up to isomorphism, gives rise to the same elliptic tangent and cotangent bundle. But the space of elliptic vector fields inside smooth vector fields will in general be different, even if $\phi$ preserves $D$.  

In order to compare the symplectic form before and after pullback with a diffeomorphism, we need the notion of elliptic vector field to stay the same, i.e. if $X\in\Gamma(TM(-\log\abs{D}))$, we need $\phi_*(X)\in\Gamma(TM(-\log\abs{D}))$ \emph{with respect to the original elliptic divisor}. 

\begin{prop}\label{prop:ellflow}
The flow $\phi_t$ of a time-dependent elliptic vector field $X_t \in \Gamma(TM(-\log \abs{D}))$ preserves the space of elliptic vector fields under push-forward, i.e. 
\[ Y\in \Gamma(TM(-\log\abs{D})) \Rightarrow (\phi_t)_* Y \in \Gamma(TM(-\log\abs{D})) \] 
The diffeomorphisms obtained in this manner form a subgroup of the identity component of the diffeomorphism group. 
\end{prop}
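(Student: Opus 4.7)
My approach is to reduce the vector-field statement to a statement about the vanishing ideal, namely that $\phi_t^* I_q = I_q$ for all $t$ in the flow's domain. Given this, for any elliptic $Y$ and any $f \in I_q$, the naturality identity
\[ ((\phi_t)_* Y)(f) = (\phi_t^{-1})^*\bigl(Y(\phi_t^* f)\bigr) \]
places $\phi_t^* f$ in $I_q$, then $Y(\phi_t^* f)$ in $I_q$ by ellipticity of $Y$, and finally $(\phi_t^{-1})^*$ keeps us inside $I_q$. Since $(\phi_t)_* Y$ is automatically smooth (as $\phi_t$ is a diffeomorphism), this establishes that $(\phi_t)_* Y \in \Gamma(TM(-\log\abs{D}))$.

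\textbf{The key estimate.} To prove $\phi_t^* I_q = I_q$, I would work in a tubular neighbourhood of $D$ in which the elliptic divisor is represented by the squared radial function $r^2$ in polar--transverse coordinates $(r,\theta,y_i)$, so that $I_q$ is locally generated by $r^2$. Ellipticity of $X_t$ means $X_t(r^2) = 2c_t\,r^2$ for some smooth $c_t$: expanding $X_t$ in the local frame $r\party{}{r},\party{}{\theta},\party{}{y_i}$, only the $r\party{}{r}$-component contributes to $X_t(r^2)$, with a smooth coefficient. The standard flow identity then gives the linear ODE
\[ \frac{d}{dt}\phi_t^* r^2 \;=\; \phi_t^*(X_t\cdot r^2) \;=\; 2\,\phi_t^*(c_t)\cdot \phi_t^* r^2, \]
whose solution is $\phi_t^* r^2 = r^2 \cdot \exp\!\bigl(2\int_0^t \phi_s^* c_s\,ds\bigr)$, i.e.\ $r^2$ times a smooth strictly positive function. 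Hence $\phi_t^* r^2 \in I_q$, so locally $\phi_t^* I_q \subset I_q$; rerunning the argument with the time-reversed flow (which is generated by a time-dependent elliptic vector field) yields equality.

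\textbf{Subgroup property and anticipated obstacle.} Compositions and inverses of such flows are themselves flows of concatenated/time-reversed elliptic vector fields, so these diffeomorphisms form a subgroup; the path $s \mapsto \phi_{st}$ connects each $\phi_t$ to the identity inside this subgroup, placing it in the identity component of $\mathrm{Diff}(M)$. The only genuinely delicate step is the local-to-global passage in the key estimate: although $r^2$ is a local representative of $q$ (unique up to multiplication by a smooth positive function), the conclusion $\phi_t^* q = h_t \cdot q$ with $h_t$ smooth and positive is intrinsic, because rescaling the local generator by a positive smooth factor rescales $h_t$ by a positive smooth factor. I expect the smooth bookkeeping near $D$ to be the only nontrivial point; the rest is a direct consequence of the Lie algebroid structure of $TM(-\log\abs{D})$, under which $[X_t, Y]$ lies in $\Gamma(TM(-\log\abs{D}))$ whenever $X_t$ and $Y$ do, which is the infinitesimal form of the statement being proved.
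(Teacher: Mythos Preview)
Your proof is correct and takes a genuinely different route from the paper. The paper does not give a direct argument at all: it simply invokes general Lie groupoid and Lie algebroid theory (citing Mackenzie), noting that sections of a Lie algebroid exponentiate to bisections of an integrating Lie groupoid, which then act adjointly on the algebroid. In this framework the elliptic tangent bundle is a Lie algebroid, and the statement is a specialisation of that general machinery.

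Your approach is more elementary and self-contained: you work directly with the defining vanishing ideal $I_q$, show via a linear ODE that the flow of an elliptic vector field preserves $I_q$ (indeed maps a local generator $r^2$ to a positive smooth multiple of itself), and then the characterisation of elliptic vector fields as those preserving $I_q$ immediately gives the pushforward statement. This has the advantage of being accessible without any Lie groupoid background and makes the geometric content transparent; it also applies verbatim to any Lie algebroid defined as the stabiliser of an ideal sheaf (e.g.\ the log tangent bundle). What the paper's approach buys is a conceptual placement of the result: it is an instance of a general phenomenon for Lie algebroids, and the subgroup you describe is identified with the identity component of the group of bisections of the integrating groupoid. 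Your local-to-global concern is handled correctly, since the statement $\phi_t^* I_q = I_q$ is intrinsic and can be verified chart by chart near $D$, while away from $D$ the ideal is the whole ring.
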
 

This result follows from general Lie groupoid and Lie algebroid theory, which can for example be found in Chapter 3 of \cite{Mackenzie2005}. If $A$ is the Lie algebroid of the Lie groupoid $G$, there is a notion of exponential map allowing the integration of Lie algebroid sections to bisections of the Lie groupoid, which reaches the entire identity subgroupoid of $G$. $G$ acts adjointly on $A$ and the groupoid bisections induce diffeomorphisms -- in this case, this means that elliptic diffeomorphisms obtained as the flow of elliptic vector fields do indeed act on the original elliptic vector fields. 

So it makes sense to consider the diffeomorphisms obtained as the flow of (time-dependent) elliptic vector fields, corresponding to the identity component of the elliptic groupoid. Within this set, an \emph{elliptic symplectomorphism} is one that satisfies 
\[ \phi^*\omega = \omega. \] 
We call this subgroup of (the identity component of) all diffeomorphisms of $M$ the \emph{elliptic symplectomorphism group}. 

\subsection{The Flux homomorphism for log and elliptic symplectic manifolds} 
Analogously to ordinary symplectic manifolds, we can define a flux homomorphisms for log and elliptic symplectic manifolds to pick out Hamiltonian diffeomorphisms (i.e. the endpoints of  Hamiltonian isotopies) in the identity component of the log or elliptic symplectomorphism group respectively. 
To simplify the notation, in this subsection only, we will denote the elliptic and logarithmic objects in the same manner: The singularity locus is $D$, the elliptic or log symplectic form is $\omega$, the log or elliptic tangent bundle is $TM(-\log D)$, the elliptic or log cohomology is $H^{\bullet}(M,\log D)$, and so on. Where there is a difference between the logarithmic and the elliptic case, it will be specifically indicated. 
Denote by $\operatorname{Symp}_0(M,\omega)$ the identity component of the group of (log or elliptic) symplectomorphisms of a log or elliptic symplectic manifold $(M,D)$ with (log or elliptic) symplectic form $\omega$, and by $\widetilde{\Symp}_0(M,\omega)$ its universal cover. 

The results and proofs in this section closely follow \cite{McDuffSala1998} and \cite{Oh2015}, which develop this theory for ordinary symplectic manifolds. 

\begin{defn}
The \emph{flux homomorphism} is 
\[ \Flux: \widetilde{\Symp}_0(M,\omega) \rightarrow H^1(M,\log D),\ \Flux(\{\psi_t\})= \int_0^1 [i_{X_t}\omega]\dx t,  \] 
where $\{\psi_t\}, t\in [0,1]$ is a representative of a homotopy class of paths in $\Symp_0(M,\omega)$ with endpoint $\psi_1$, and $X_t$ its associated time-dependent log or elliptic vector field. 
\end{defn} 
\begin{thm} 
The flux homomorphism, as above, is well-defined, and a group homomorphism. 
\end{thm}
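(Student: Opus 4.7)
The plan is to follow the classical McDuff--Salamon argument for ordinary symplectic manifolds, adapted to the Lie algebroid setting. Three things need to be checked: (a) that the integrand $[i_{X_t}\omega]$ makes sense as a log/elliptic cohomology class for each $t$; (b) that the integral depends only on the homotopy class of $\{\psi_t\}$ rel endpoints; (c) that $\Flux$ is a group homomorphism. The key structural inputs we get to use are Cartan's formula on log/elliptic forms, the closedness of $\omega$, and Proposition \ref{prop:ellflow}, which guarantees that flows of time-dependent log/elliptic vector fields stay within the identity component of the log/elliptic symplectomorphism group.

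For (a), I would differentiate the relation $\psi_t^*\omega=\omega$ in $t$ to obtain $\psi_t^*(L_{X_t}\omega)=0$, hence $L_{X_t}\omega=0$; Cartan's formula then gives $\dx(i_{X_t}\omega)=L_{X_t}\omega-i_{X_t}\dx\omega=0$, so $i_{X_t}\omega$ represents a well-defined class in $H^1(M,\log D)$, and the integrand is a continuous path of cohomology classes.

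For (b), let $\psi_{s,t}$ be a smooth homotopy of paths in $\Symp_0(M,\omega)$ with $\psi_{s,0}=\Id$ and $\psi_{s,1}$ independent of $s$, and set $X_t^s=(\partial_t\psi_{s,t})\circ\psi_{s,t}^{-1}$, $Y_s^t=(\partial_s\psi_{s,t})\circ\psi_{s,t}^{-1}$. The standard commutation identity $\partial_s X_t^s-\partial_t Y_s^t=[Y_s^t,X_t^s]$ holds on any Lie groupoid, so in particular for elliptic/log ones. Contracting with $\omega$ and repeatedly applying Cartan together with $L_{X_t^s}\omega=L_{Y_s^t}\omega=0$ produces an identity of the form $\partial_s(i_{X_t^s}\omega)-\partial_t(i_{Y_s^t}\omega)=\dx\beta_{s,t}$ for some log/elliptic $0$-form $\beta_{s,t}$. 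Integrating in $t$ from $0$ to $1$, the $\partial_t$-term vanishes because $Y_s^0=Y_s^1=0$, and the right-hand side becomes exact in $H^1(M,\log D)$; thus $\partial_s\int_0^1[i_{X_t^s}\omega]\,\dx t=0$ and homotopy invariance follows.

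For (c), given two paths $\{\psi_t\},\{\phi_t\}$ in $\Symp_0(M,\omega)$, form their concatenation, or equivalently use the product path $\psi_t\phi_t$ whose generating vector field at time $t$ is $X_t^\psi+(\psi_t)_*X_t^\phi$. The class $[(\psi_t)_*(i_{X_t^\phi}\omega)]=[\psi_t^{-1,*}(i_{X_t^\phi}\omega)]$ is preserved by the pullback $\psi_t^{-1,*}$ on $H^1(M,\log D)$, because by (b) the action of $\Symp_0$ on cohomology is trivial (it is generated by Lie derivatives along log/elliptic vector fields, and by Cartan plus $L_{X}\omega=0$ these act by coboundaries on classes of the form $[i_X\omega]$, and by homotopy invariance they act trivially on all classes via the same argument on paths to $\Id$). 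Summing the contributions gives $\Flux(\{\psi_t\phi_t\})=\Flux(\{\psi_t\})+\Flux(\{\phi_t\})$.

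The main obstacle I expect is in step (b): one must verify carefully that the algebraic identity relating $\partial_s X_t^s$, $\partial_t Y_s^t$, and $[Y_s^t,X_t^s]$ survives intact on the elliptic and log tangent Lie algebroids, and that the resulting application of Cartan produces a genuinely log/elliptic primitive $\beta_{s,t}$ rather than merely a smooth one; Proposition \ref{prop:ellflow} supplies exactly what is needed, but the bookkeeping of which contractions land in $\Omega^{\bullet}(M,\log D)$ (versus smooth forms) should be done explicitly to close the argument.
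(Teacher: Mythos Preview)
Your proposal is correct and follows essentially the same route as the paper: both invoke the McDuff--Salamon argument, using Banyaga's two-parameter identity $\partial_t Y = \partial_s X + [Y,X]$ to establish homotopy invariance (the paper phrases the key step as ``the Lie bracket of two log/elliptic symplectic vector fields is Hamiltonian,'' which is your statement that $i_{[Y,X]}\omega$ is exact), and both obtain the group homomorphism property from the fact that $(\psi_t)_*X - X$ is Hamiltonian for symplectic $X$. Your worry about primitives landing in the log/elliptic complex is unfounded here, since $i_X$, $L_X$, and $\dx$ all preserve $\Omega^\bullet(M,\log D)$ once $X$ is log/elliptic; the paper simply takes this for granted.
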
 

To prove this theorem, we first establish the two following lemmas: 
\begin{lem}\textbf{(Banyaga's Lemma, \cite{Banyaga1978})} \label{lem:bany}
Let $\{\phi^s_t\}$ be a smooth two-parameter family of diffeomorphisms on $M$. Denote 
\[ X=X^s_t= \party{\phi^s_t}{t} \circ (\phi^s_t)^{-1},\ Y=Y^s_t= \party{\phi^s_t}{s} \circ (\phi^s_t)^{-1}. \] 
Then 
\[ \party{Y}{t}=\party{X}{s} + [Y,X] \]
\end{lem}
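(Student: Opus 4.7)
The plan is to reduce Banyaga's Lemma to a straightforward chain-rule computation combined with equality of mixed partials. Rewriting the definitions as the identities
\[ X^s_t \circ \phi^s_t = \party{\phi^s_t}{t}, \qquad Y^s_t \circ \phi^s_t = \party{\phi^s_t}{s}, \]
we can differentiate each in the opposite parameter and use $\partial_t \partial_s \phi^s_t = \partial_s \partial_t \phi^s_t$ to produce a single relation between $\partial_t Y$, $\partial_s X$, and the mixed terms involving the differentials $DX$ and $DY$.

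Concretely, I would first apply $\partial_s$ to the identity $X^s_t\circ \phi^s_t = \partial_t \phi^s_t$. By the chain rule this yields
\[ (\partial_s X^s_t)\circ \phi^s_t + (DX^s_t)\bigl(Y^s_t \circ \phi^s_t\bigr) = \partial_s\partial_t \phi^s_t, \]
using that $\partial_s \phi^s_t = Y^s_t \circ \phi^s_t$. Symmetrically, applying $\partial_t$ to $Y^s_t\circ \phi^s_t = \partial_s \phi^s_t$ gives
\[ (\partial_t Y^s_t)\circ \phi^s_t + (DY^s_t)\bigl(X^s_t \circ \phi^s_t\bigr) = \partial_t\partial_s \phi^s_t. \]
Equating the right-hand sides via Clairaut's theorem and subtracting produces
\[ (\partial_t Y - \partial_s X)\circ \phi^s_t = (DX)(Y\circ \phi^s_t) - (DY)(X\circ \phi^s_t). \]

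The next step is to recognise the right-hand side as a Lie bracket: in local coordinates $(DX)(Y) - (DY)(X)$ has $i$-th component $Y^j\partial_j X^i - X^j\partial_j Y^i$, which is exactly the coordinate expression for $[Y,X]$ acting as a derivation. Hence the right-hand side equals $[Y,X]\circ \phi^s_t$. Post-composing with $(\phi^s_t)^{-1}$ yields the desired identity $\partial_t Y = \partial_s X + [Y,X]$.

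I do not expect a real obstacle here; the lemma is essentially a bookkeeping exercise. The only point requiring care is keeping the two distinct roles of $DX$ straight — namely, that $(DX)(Y\circ \phi^s_t)$ arises from differentiating the \emph{first} slot of $X\circ \phi$ via the chain rule, not from any derivative of $X$ in $s$ or $t$ — and using the correct sign convention so that $(DX)(Y) - (DY)(X) = [Y,X]$ rather than $[X,Y]$. Since the argument is purely local and uses only smoothness of the two-parameter family, it applies verbatim in the logarithmic or elliptic setting once one observes that flows of (log or elliptic) vector fields are defined on the relevant domain.
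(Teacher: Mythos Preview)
Your argument is correct and is essentially the standard proof of Banyaga's Lemma: rewrite the defining relations, differentiate each in the other parameter, equate mixed partials, and identify $(DX)(Y)-(DY)(X)$ with $[Y,X]$. The paper does not supply its own proof of this lemma at all; it simply cites \cite{Oh2015}, Lemma~2.4.2, for both the formulation and the proof, so there is nothing to compare against beyond noting that your computation matches the textbook derivation.
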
 
This formulation of Banyaga's Lemma and a proof can be found in \cite{Oh2015}, Lemma 2.4.2. The following lemma generalises Lemma 2.4.3 in \cite{Oh2015} to log and elliptic forms: 
\begin{lem}
Let $\{\psi_t\},\{\psi'_t\} \in \widetilde{\Symp}_0(M,\omega)$ two paths from the identity to the same endpoint $\psi=\psi_1=\psi'_1$. Let $X_t,X'_t$ be the log/elliptic vector fields associated to these paths. \\
If $\{\psi_t\},\{\psi'_t\}$ are homotopic relative to their ends, then the log/elliptic one-form 
\[ \int_0^1 i_{X_t-X'_t}\omega \dx t \] 
is exact. 
\end{lem}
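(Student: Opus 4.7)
The plan is to adapt the classical symplectic proof of flux independence to the Lie algebroid setting of $TM(-\log D)$ (in both the log and the elliptic case). First I would choose a smooth homotopy $\{\psi^s_t\}_{s,t\in[0,1]}$ in $\Symp_0(M,\omega)$ with $\psi^0_t=\psi_t$, $\psi^1_t=\psi'_t$, and $\psi^s_0=\Id$, $\psi^s_1=\psi$ independent of $s$, and set
\[ X^s_t = \party{\psi^s_t}{t}\circ (\psi^s_t)^{-1},\qquad Y^s_t = \party{\psi^s_t}{s}\circ (\psi^s_t)^{-1}, \]
which are log/elliptic vector fields by Proposition~\ref{prop:ellflow}. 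The goal is then to show that the cohomology class $\bigl[\int_0^1 i_{X^s_t}\omega\,\dx t\bigr]\in H^1(M,\log D)$ is independent of $s$, which immediately yields the statement of the lemma at $s=0,1$.

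The next step is to establish two key identities. Since each $\psi^s_t$ is a symplectomorphism, differentiating $(\psi^s_t)^*\omega=\omega$ in $t$ and $s$ gives $\mathcal{L}_{X^s_t}\omega=\mathcal{L}_{Y^s_t}\omega=0$; combined with $\dx\omega=0$ and Cartan's formula, this forces $\dx(i_{X^s_t}\omega)=\dx(i_{Y^s_t}\omega)=0$. The Lie algebroid identity $i_{[Y,X]}=\mathcal{L}_Y i_X-i_X\mathcal{L}_Y$ then produces
\[ i_{[Y^s_t,X^s_t]}\omega \;=\; \mathcal{L}_{Y^s_t}(i_{X^s_t}\omega) \;=\; \dx\bigl(i_{Y^s_t}\, i_{X^s_t}\omega\bigr). \]

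I would then apply Banyaga's Lemma (Lemma~\ref{lem:bany}) to obtain $\partial_s X^s_t = \partial_t Y^s_t - [Y^s_t,X^s_t]$, and compute
\[ \frac{d}{ds}\int_0^1 i_{X^s_t}\omega\,\dx t \;=\; \int_0^1 \partial_t(i_{Y^s_t}\omega)\,\dx t \;-\; \dx\!\int_0^1 i_{Y^s_t}\, i_{X^s_t}\omega\,\dx t. \]
The first integral telescopes to $i_{Y^s_1}\omega - i_{Y^s_0}\omega$, which vanishes because the endpoints $\psi^s_0$ and $\psi^s_1$ are independent of $s$, forcing $Y^s_0=Y^s_1=0$. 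Hence $\tfrac{d}{ds}\bigl[\int_0^1 i_{X^s_t}\omega\,\dx t\bigr]=0$ in cohomology, and integrating from $s=0$ to $s=1$ exhibits $\int_0^1 i_{X_t-X'_t}\omega\,\dx t$ as the $\dx$-differential of $-\int_0^1\!\int_0^1 i_{Y^s_t}\, i_{X^s_t}\omega\,\dx t\,\dx s$.

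The main obstacle, and essentially the only novelty compared to the smooth symplectic proof, is verifying that every ingredient remains valid in the log or elliptic setting. Cartan's formula and the interior-bracket identity hold on any Lie algebroid, so they transfer verbatim to $TM(-\log Z)$ and $TM(-\log\abs{D})$. The subtle point is that $X^s_t$ and $Y^s_t$ must be genuine sections of $TM(-\log D)$ \emph{with respect to the original divisor}, so that $i_{X^s_t}\omega$ and $\mathcal{L}_{X^s_t}\omega$ are well-defined log/elliptic forms in the first place; this is exactly what Proposition~\ref{prop:ellflow} guarantees, and is why we restricted to the identity component of the Lie groupoid in the previous subsection.
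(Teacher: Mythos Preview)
Your proof is correct and follows essentially the same approach as the paper: choose a homotopy, apply Banyaga's Lemma to rewrite $\partial_s X^s_t$, observe that the $\partial_t Y^s_t$ contribution telescopes to zero by the fixed-endpoint condition, and use that the bracket of two symplectic vector fields is Hamiltonian to conclude exactness. Your version is slightly more explicit in that you write out the primitive $-\int_0^1\int_0^1 i_{Y^s_t}i_{X^s_t}\omega\,\dx t\,\dx s$ via the Cartan identity, whereas the paper simply appeals to the general fact that $[X^s_t,Y^s_t]$ is Hamiltonian; but this is a cosmetic difference, not a different route.
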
 
\begin{proof} 
Let $\psi^s_t$ be a homotopy between $\psi_t,\psi'_t$ relative to $\{0,1\}$ i.e. 
\[ \psi^s_0= \Id_M,\ \psi^s_1 = \psi,\ 0\leq s\leq 1 \] 
Denote by $X^s_t$ the vector field associated to the path $\{\psi^s_t,0\leq t\leq 1\}$. \\ 
If we can prove that the log/elliptic one-form 
\[ \frac{\dx}{\dx s} \int_0^1 i_{X^s_t}\omega \dx t \] 
is exact for all $s$, the result follows. 

We have already defined $X^s_t$ as in Lemma \ref{lem:bany}; define $Y^s_t$ accordingly as well. All $\psi^s_t$ are log/elliptic symplectomorphisms, so the vector fields $X^s_t, Y^s_t$ are symplectic vector fields. Compute: 
\[ \frac{\dx}{\dx s} \int_0^1 i_{X^s_t}\omega \dx t= \int_0^1 \party{}{s}(i_{X^s_t}\omega)\dx t = \int_0^1i_{\party{X^s_t}{s}}\omega \dx t, \]
which we can rewrite using Lemma \ref{lem:bany}: 
\[   \frac{\dx}{\dx s} \int_0^1 i_{X^s_t}\omega \dx t= \int_0^1 i_{\left(\party{Y^s_t}{t} + [X^s_t,Y^s_t]\right)}\omega \dx t.\]
Now, the first part of the integral is simply 
\[ i_{Y^s_1-Y^s_0}\omega =0\ \forall s, \] 
since $\psi^s_1=\psi\ \forall s, \psi^s_0=\Id_M\ \forall s$. As the Lie bracket of two log/elliptic symplectic vector fields, $[X^s_t,Y^s_t]$ is a log/elliptic Hamiltonian vector field (proof proceeds exactly as in the ordinary symplectic case) and thus the second term is exact for all $s$. 
\end{proof} 

This shows that $\Flux: \widetilde{\Symp}_0(M,\omega) \rightarrow H^1(M,\log D)$ is well-defined. 

The group homomorphism property follows easily from the fact that if the isotopy of symplectomorphisms $\{\phi_t\}$ is generated by $X_t$, and $\{\psi_t\}$ by $Y_t$, $\psi_t \circ \phi_t$ is generated by $Y_t + (\psi_t)_* X_t$. Furthermore, $\phi_* X - X$ is Hamiltonian whenever $X$ is a symplectic vector field and $\phi$ a symplectomorphism. 

\begin{thm}\textbf{(Compare Theorem 10.12 in \cite{McDuffSala1998})}  
Let $\psi \in \Symp_0(M,\omega)$. $\psi$ is a log/elliptic Hamiltonian symplectomorphism if there exists a symplectic isotopy $\psi_t$ with $\psi_0=\operatorname{Id}_M, \psi_1=\psi$ such that 
\[ \Flux(\{\psi_t\}) = 0.\] 
Conversely, given a symplectic isotopy $\{\psi_t\}$ with $\Flux(\{\psi_t\})=0$, it is homotopic (with fixed endpoints) to a Hamiltonian isotopy. 
\end{thm}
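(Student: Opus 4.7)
The plan is to follow the classical argument of Banyaga \cite{Banyaga1978} (cf.\ \cite{McDuffSala1998}, Theorem 10.12) for ordinary symplectic manifolds, verifying that the main ingredients carry over to the log and elliptic settings via Proposition \ref{prop:ellflow} and the preceding lemmas. The forward direction is immediate: if $\{\phi_t\}$ is a Hamiltonian isotopy with time-dependent Hamiltonian $H_t \in C^{\infty}(M)$, then $i_{X_t}\omega = \dx H_t$ represents the zero class in $H^1(M,\log D)$ for every $t$, so the integral defining $\Flux(\{\phi_t\})$ vanishes. Since the second assertion implies the first (an isotopy homotopic rel endpoints to a Hamiltonian one with endpoint $\psi$ exhibits $\psi$ as Hamiltonian), the substance of the theorem is the converse.

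Assume then that $\{\psi_t\}$ is a log/elliptic symplectic isotopy with $\Flux(\{\psi_t\}) = 0$, so that $\int_0^1 \alpha_t \,\dx t = \dx f$ for some $f \in C^{\infty}(M)$, where $\alpha_t := i_{X_t}\omega$. Fix a smooth $\rho: [0,1] \to \Rnum$ with $\rho(0) = 0$ and $\rho(1) = 1$, and consider the two-parameter family of closed log/elliptic 1-forms
\[ \alpha_t^s := (1-s)\,\alpha_t + s\,\rho'(t)\,\dx f, \qquad (s,t)\in [0,1]^2, \]
which restricts to $\alpha_t$ at $s=0$ and to the exact form $\rho'(t)\,\dx f$ at $s=1$. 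Using non-degeneracy of $\omega$, set $X_t^s := \omega^{-1}(\alpha_t^s)$; these are smooth log/elliptic symplectic vector fields which, by Proposition \ref{prop:ellflow}, integrate to a smooth family $\psi_t^s$ of log/elliptic symplectomorphisms with $\psi_0^s = \Id_M$ and $\psi_t^0 = \psi_t$. At $s=1$ one has $X_t^1 = \rho'(t)\,\omega^{-1}(\dx f)$, which is Hamiltonian with time-dependent Hamiltonian $\rho'(t)\,f$, so $\{\psi_t^1\}$ is a Hamiltonian isotopy.

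To close the argument, one must enforce $\psi_1^s \equiv \psi$. Setting $Y_t^s := \party{\psi_t^s}{s}\circ (\psi_t^s)^{-1}$, Banyaga's Lemma \ref{lem:bany} combined with the cohomological computation from the well-definedness proof of $\Flux$ shows that $i_{Y_1^s}\omega$ is cohomologous to
\[ \int_0^1 \party{\alpha_t^s}{s}\,\dx t = \int_0^1 \bigl(\rho'(t)\,\dx f - \alpha_t\bigr)\,\dx t = \dx f - \dx f = 0, \]
so $Y_1^s$ is a log/elliptic Hamiltonian vector field for every $s$; equivalently, $\eta_s := \psi_1^s \circ \psi^{-1}$ is a Hamiltonian isotopy in $\Symp_0(M,\omega)$ starting at $\Id_M$. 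Choosing a smooth cutoff $\chi:[0,1]\to[0,1]$ with $\chi(0)=0$ and $\chi(1)=1$, define the corrected family
\[ \tilde{\psi}_t^s := \eta_{s\chi(t)}^{-1}\circ \psi_t^s. \]
A direct check gives $\tilde{\psi}_0^s = \Id_M$, $\tilde{\psi}_1^s = \psi$, $\tilde{\psi}_t^0 = \psi_t$, and $\{\tilde{\psi}_t^1\}$ is a composition of Hamiltonian isotopies and hence itself Hamiltonian. This provides the required homotopy rel endpoints.

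I expect the main obstacle to be this endpoint-correction step, i.e.\ passing from the cohomological vanishing ``$Y_1^s$ is Hamiltonian for all $s$'' to a bona fide homotopy with fixed endpoints in $\Symp_0(M,\omega)$. In the ordinary symplectic case this is routine; in the log/elliptic setting no new input is needed beyond Proposition \ref{prop:ellflow}, which guarantees that all the isotopies constructed remain within the elliptic/log symplectomorphism group, together with the standard facts that pushforwards and sums of Hamiltonian vector fields remain Hamiltonian.
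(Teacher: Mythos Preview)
Your proposal is correct and is precisely the classical Banyaga/McDuff--Salamon argument that the paper is invoking; the paper itself gives no details beyond ``The proof again proceeds exactly as in the ordinary symplectic case; with the vector fields and forms being log or elliptic'', so your write-up simply fills in what the paper leaves implicit. The only places where the log/elliptic setting could in principle cause trouble---existence of the flows (Proposition \ref{prop:ellflow}), the fact that the bracket of two log/elliptic symplectic vector fields is Hamiltonian, and that an exact log/elliptic $1$-form has a smooth primitive---are exactly the points you flag, and they are handled by the lemmas already established in the section.
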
 
The proof again proceeds exactly as in the ordinary symplectic case; with the vector fields and forms being log or elliptic. 

\begin{lem} \textbf{(Compare Lemma 10.14 in \cite{McDuffSala1998})} \label{lem:exact}
Now assume that $M=T^*L(\log D_L)$ for some compact $(L,D_L)$ with either a logarithmic or elliptic structure. As previously discussed in the logarithmic case, the log or elliptic structure pulls back to the log/elliptic cotangent bundle, and there is a canonical log/elliptic symplectic form $\omega=\dx \lambda$, $\lambda$ the tautological log/elliptic one-form on the log/elliptic cotangent bundle. Let $\psi_t$ a log/elliptic symplectic isotopy on $M$, then 
\[ \Flux(\{\psi_t\}) = [\psi_1^*\lambda-\lambda] \] 
\end{lem}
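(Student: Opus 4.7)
The plan is to exploit the exactness of the canonical form $\omega = \dx \lambda$ and reduce everything to two successive applications of Cartan's magic formula, exactly as in the ordinary symplectic case but working throughout in the log/elliptic calculus.

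First I would compute $\frac{\dx}{\dx t}(\psi_t^*\lambda)$. Since by hypothesis $\psi_t$ is a log/elliptic symplectic isotopy generated by a time-dependent log/elliptic vector field $X_t$ (which is legitimate in the elliptic case by Proposition \ref{prop:ellflow}), the standard Lie derivative identity $\frac{\dx}{\dx t}\psi_t^*\alpha = \psi_t^* \Ld_{X_t}\alpha$ carries over without change. Applying Cartan's formula to $\lambda$ and using $\dx \lambda = \omega$ gives
\[ \frac{\dx}{\dx t}(\psi_t^*\lambda) = \psi_t^*(\dx i_{X_t}\lambda) + \psi_t^*(i_{X_t}\omega). \]
Integrating over $t \in [0,1]$ and pulling $\dx$ outside the first integral yields
\[ \psi_1^*\lambda - \lambda = \dx \int_0^1 \psi_t^* i_{X_t}\lambda \, \dx t + \int_0^1 \psi_t^*(i_{X_t}\omega)\, \dx t, \]
so in log/elliptic cohomology $[\psi_1^*\lambda - \lambda] = \left[\int_0^1 \psi_t^*(i_{X_t}\omega)\, \dx t\right]$.

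Second, I would show that this cohomology class coincides with $\int_0^1 [i_{X_t}\omega]\, \dx t = \Flux(\{\psi_t\})$. The key point is that for each fixed $t$ the form $\beta_t := i_{X_t}\omega$ is closed (as $X_t$ is a symplectic vector field, so $\Ld_{X_t}\omega = 0$). Applying the same Cartan homotopy argument to the isotopy $\psi_s$ acting on the closed form $\beta_t$, one obtains
\[ \psi_t^*\beta_t - \beta_t = \dx \int_0^t \psi_s^*(i_{X_s}\beta_t)\, \dx s. \]
Integrating this identity over $t \in [0,1]$ and pulling $\dx$ outside shows that $\int_0^1 \psi_t^*(i_{X_t}\omega)\, \dx t$ and $\int_0^1 i_{X_t}\omega\, \dx t$ differ by an exact log/elliptic one-form, which gives the desired equality in cohomology.

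The main obstacle is purely formal: one must verify that every operation (the pullback $\psi_t^*$, the Lie derivative $\Ld_{X_t}$, Cartan's formula, and the identity $\frac{\dx}{\dx t}\psi_t^*\alpha = \psi_t^*\Ld_{X_t}\alpha$) is legitimate on log/elliptic forms, so that the primitives produced really live in $\Omega^\bullet(M,\log D)$ and not merely in $\Omega^\bullet(M\setminus D)$. In the log case this is standard Lie algebroid calculus; in the elliptic case it is exactly the content of Proposition \ref{prop:ellflow}, which ensures that flows of elliptic vector fields preserve the space of elliptic sections, so that $\psi_t^*$ is well-defined on $\Omega^\bullet(M,\log \abs{D})$ and the tautological log/elliptic one-form $\lambda$ on $T^*L(\log D_L)$ is mapped to a form of the same type. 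Once this is in place, the two Cartan homotopy computations above go through verbatim.
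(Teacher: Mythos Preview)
Your proof is correct and follows essentially the same route as the paper's. Both arguments boil down to the identity $\frac{\dx}{\dx t}\psi_t^*\lambda = \psi_t^*L_{X_t}\lambda$ combined with Cartan's formula and the closedness of $i_{X_t}\omega$. The only cosmetic difference is in how the passage from $[\psi_t^*(i_{X_t}\omega)]$ to $[i_{X_t}\omega]$ is handled: the paper invokes the already-established fact that $\phi_*X - X$ is Hamiltonian for any symplectomorphism $\phi$ and symplectic vector field $X$, whereas you re-derive the needed exactness directly via the Cartan homotopy formula applied to the closed form $\beta_t$. These are two phrasings of the same observation, and your version is arguably more self-contained.
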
 
\begin{proof} 
Let $X_t$ be the family of log/elliptic symplectic vector fields which generates $\psi_t$. 
\begin{equation}\label{eq:exactpr} [i_{X_t}\omega]=[i_{X_t}\psi_t^*\omega] = [\psi_t^*(i_{(\psi_t)_*X_t}\omega)\stackrel{(1)}{=} [\psi_t^*(i_{X_t}\omega)]= [\psi_t^*(L_{X_t}\lambda)]\stackrel{(2)}{=}\frac{\dx}{\dx t}[\psi_t^*\lambda] \end{equation} 
For $(1)$, we use that for any (log/elliptic) symplectic vector field $X$ and any log/elliptic symplectomorphism $\phi$, $\phi_*X-X$ is Hamiltonian. $(2)$ is the application of the general identity for the Lie derivative of forms with respect to a time-dependent vector field: 
\[ \left.\frac{\dx }{\dx t'} \psi_{t'}^* \alpha\right\vert_{t'=t} = \psi_t^* L_{X_t} \alpha \] 
We obtain the result by integrating the right-hand side of (\ref{eq:exactpr}) from $t=0$ to $1$. 
\end{proof} 

\begin{cor} \label{cor:exact}
If $M=T^*L(\log D_L), D=T^*L(\log D_L)\vert_{D_L}$ with exact symplectic form, as above, 
\[\Flux(\pi_1(\Symp_0(M,\omega))) = 0.\] 
Thus we obtain a morphism $\Flux: \Symp_0(M,\omega) \rightarrow H^1(M,\log D)$
whose kernel is precisely given by Hamiltonian diffeomorphisms. 
\end{cor}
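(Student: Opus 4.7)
The plan is to deduce this corollary almost entirely from Lemma \ref{lem:exact} together with the Hamiltonian-vs-flux characterization stated just before it (the analogue of Theorem 10.12 in \cite{McDuffSala1998}). The argument splits naturally into two parts: first, showing the flux descends from $\widetilde{\Symp}_0(M,\omega)$ to $\Symp_0(M,\omega)$, i.e.\ vanishes on $\pi_1(\Symp_0(M,\omega))$; second, identifying the kernel of the descended map with the group of Hamiltonian diffeomorphisms.

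For the first part I would take any class in $\pi_1(\Symp_0(M,\omega))$, represented by a symplectic loop $\{\psi_t\}_{t\in[0,1]}$ with $\psi_0=\psi_1=\Id_M$, and apply Lemma \ref{lem:exact} directly. Since the base is the log/elliptic cotangent bundle with its tautological primitive $\lambda$ (so $\omega=\dx \lambda$), the lemma gives
\[ \Flux(\{\psi_t\}) = [\psi_1^*\lambda - \lambda] = [\lambda - \lambda] = 0. \]
This is the only place where exactness of $\omega$ is used, and it is exactly why the hypothesis $M=T^*L(\log D_L)$ appears. Consequently $\Flux$ factors through the quotient $\widetilde{\Symp}_0(M,\omega) / \pi_1(\Symp_0(M,\omega)) = \Symp_0(M,\omega)$, yielding a well-defined group homomorphism $\Flux: \Symp_0(M,\omega)\to H^1(M,\log D)$, independent of the chosen isotopy from $\Id$ to $\psi$.

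For the second part, I would characterize the kernel by a two-sided implication. If $\psi$ is Hamiltonian, choose a generating Hamiltonian isotopy $\{\psi_t\}$ with time-dependent log/elliptic Hamiltonian vector field $X_t=\omega^{-1}(\dx H_t)$; then $i_{X_t}\omega=\dx H_t$ is exact, so the defining integral for $\Flux(\{\psi_t\})$ is a path integral of exact cohomology classes and hence vanishes, giving $\Flux(\psi)=0$. Conversely, if $\Flux(\psi)=0$ in the descended sense, then for any choice of symplectic isotopy $\{\psi_t\}$ from $\Id$ to $\psi$ the representative $\Flux(\{\psi_t\})\in H^1(M,\log D)$ vanishes (by well-definedness of the descent); Theorem 5.4 then gives a homotopy with fixed endpoints from $\{\psi_t\}$ to a Hamiltonian isotopy, so $\psi$ itself is Hamiltonian.

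The only step that is not purely formal is the first: it depends on being able to apply Lemma \ref{lem:exact}, which in turn relies on having a well-defined pullback action $\psi_1^*\lambda$ of the log/elliptic symplectomorphism $\psi_1$ on the tautological one-form. In the logarithmic case this is immediate from the general pullback on log forms under log diffeomorphisms; in the elliptic case it requires Proposition \ref{prop:ellflow} to ensure that $\psi_1$, being the time-one flow of an elliptic vector field, genuinely preserves the space of elliptic forms so that $[\psi_1^*\lambda - \lambda]\in H^1(M,\log D)$ makes sense. I expect this sanity check — that the hypothesis $\psi_1\in\Symp_0$ is strong enough for Lemma \ref{lem:exact} to apply verbatim in both the log and elliptic settings — to be the main conceptual point, with everything else following the classical symplectic template.
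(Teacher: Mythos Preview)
Your proposal is correct and follows essentially the same route as the paper: apply Lemma \ref{lem:exact} to a loop (so $\psi_1=\Id_M$ and the flux vanishes), then invoke the log/elliptic analogue of Theorem 10.12 in \cite{McDuffSala1998} to identify the kernel with Hamiltonian diffeomorphisms. The paper's proof is terser—just two sentences—whereas you have spelled out the two-sided kernel characterization and the sanity check on pullbacks in the elliptic case; these additions are sound but not logically new.
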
 

\begin{proof} 
Lemma \ref{lem:exact} immediately proves the first sentence. This establishes that the flux only depends on the endpoint of a symplectic isotopy, and thus those with Hamiltonian endpoints are precisely those with zero flux. 
\end{proof} 

\subsection{Small deformations of Lagrangians in log symplectic manifolds} \label{sec:logdef}
In this section, we consider compact Lagrangians $(L,L\cap Z)$ inside a log symplectic manifold $(M,Z,\omega)$, and we assume that $L\pitchfork Z$. We have already proved a Lagrangian neighbourhood theorem for this scenario, Theorem \ref{thm:lognbhd}. We consider a neighbourhood $U\cong T^*L(\log L\cap Z)$ of $L$ as in this theorem, equipped with the canonical log symplectic form $\omega= \dx \lambda$. 

\begin{defn} 
A \emph{strong map of pairs} $f:(A,B)\rightarrow (M,N)$ (where $B\subset A, N\subset M$ are smooth submanifolds) is a smooth map with $f^{-1}(N)=B$. 
\end{defn} 

\begin{defn}\label{def:logdef}
A \emph{small deformation} of $L$ is a second Lagrangian $L'$ connected to $L$ by a smooth family of Lagrangians 
\[ \phi: L\times [0,1] \rightarrow M, \phi(L,0)=0, \phi(L,1)=L', \]
where each $\phi_t:=\phi(\cdot,t): (L,L\cap Z) \rightarrow (M,Z)$ is a smooth embedding and a strong map of pairs, and all $\phi_t$ are $C^1$-close to $\phi_0$. 
\end{defn} 
Note that if we have any smooth family of log Lagrangian embeddings $\phi: L\times [0,1] \rightarrow M, \phi(L,0)=L$, for sufficiently small $t$, $\phi_t$ will always be $C^1$-close to $\phi_0$. Furthermore, the images $\phi(L,t)$ will intersect the fibres in the tubular neighbourhood $U\cong T^*L(\log L\cap Z)$ of $L$ transversely. \\
Any such small deformation of $L$ can then be written as the graph of a logarithmic one-form on $L$ in $U$ -- and it is easy to check that with the canonical log symplectic form on $T^*L(\log L\cap Z)$, such a graph will be Lagrangian if and only if the log one-form is closed. 

\begin{prop} \label{prop:logdef}
Small deformations of $L$ up to local Hamiltonian isotopy (i.e. such that the image of $L$ never leaves $U$) are given by 
\[ H^1(L,\log L\cap Z). \] 
\end{prop}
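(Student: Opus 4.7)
The plan is to apply the Lagrangian neighbourhood theorem (Theorem \ref{thm:lognbhd}) to identify $U$ with a neighbourhood of the zero section in $T^*L(\log L\cap Z)$ equipped with the canonical exact log symplectic form $\omega=d\lambda$, and then follow the pattern of the classical Weinstein--Moser proof in this logarithmic model, with the flux homomorphism of Section \ref{sec:symplecto} replacing its ordinary symplectic counterpart. The first step is to show that every small deformation $L'$ arises as the graph of a unique closed logarithmic one-form $\alpha\in\Omega^1(L,\log L\cap Z)$. The $C^1$-closeness of $\phi_t$ to $\phi_0$ forces transversality to the fibres of $\pi:T^*L(\log L\cap Z)\to L$, so that $L'$ is cut out by a section $\sigma_\alpha:L\to T^*L(\log L\cap Z)$; the strong-map-of-pairs condition $\phi_t^{-1}(Z)=L\cap Z$ is what guarantees that this section extends smoothly across $L\cap Z$ as a section of the \emph{log} cotangent bundle, so $\alpha$ is genuinely a logarithmic one-form rather than one with uncontrolled singularities. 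Since $\sigma_\alpha^*\lambda=\alpha$ and $\omega=d\lambda$, the graph is Lagrangian iff $d\alpha=0$, and one defines $\Phi(L'):=[\alpha]\in H^1(L,\log L\cap Z)$.

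Next I would show that $\Phi$ descends to a bijection modulo local Hamiltonian isotopy. If $L_2'=\psi_1(L_1')$ for a Hamiltonian isotopy $\psi_t$ in $U$, Corollary \ref{cor:exact} gives $\Flux(\{\psi_t\})=0$, and Lemma \ref{lem:exact} then yields $\psi_1^*\lambda-\lambda=df$ for some $f\in C^\infty(U)$. The parametrization $\psi_1\circ\sigma_{\alpha_1}$ of $L_2'$ differs from the graph parametrization $\sigma_{\alpha_2}$ by a diffeomorphism $q=\pi\circ\psi_1\circ\sigma_{\alpha_1}$ of $L$ which is $C^1$-close to $\Id$ and hence isotopic to $\Id$; pulling back $\lambda$ along both parametrizations yields $q^*\alpha_2=\alpha_1+d(f\circ\sigma_{\alpha_1})$, and since $q^*$ acts trivially on log cohomology one reads off $[\alpha_1]=[\alpha_2]$. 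For the converse, suppose $\alpha_2-\alpha_1=df$ for some $f\in C^\infty(L)$. The fibrewise translation $\tau_\eta:(x,p)\mapsto(x,p+\eta(x))$ satisfies $\tau_\eta^*\lambda=\lambda+\pi^*\eta$ and is a log symplectomorphism whenever $\eta$ is closed; the path $t\mapsto\tau_{t\,df}$ is a symplectic isotopy in $\Symp_0(U,\omega)$ with flux $\pi^*[df]=0$, so by Corollary \ref{cor:exact} its endpoint $\tau_{df}$ lies in the Hamiltonian subgroup, and any Hamiltonian path realizing $\tau_{df}$ carries $\operatorname{graph}(\alpha_1)$ to $\operatorname{graph}(\alpha_2)$. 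Surjectivity onto a neighbourhood of zero in $H^1(L,\log L\cap Z)$ is immediate, since the graph of any sufficiently small closed log one-form is itself a small deformation.

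The main obstacle I expect is the first step: rigorously establishing that a $C^1$-small, strongly-paired Lagrangian perturbation of the zero section really yields a \emph{smooth} section of the log cotangent bundle, rather than a one-form with pathological behaviour along $L\cap Z$. This is the log-geometric refinement of the usual graph argument and is precisely where the strong-map-of-pairs hypothesis enters crucially; once it is in place, the remainder of the proof is a direct transcription of the Weinstein--Moser and flux-theoretic arguments into the logarithmic setting developed in Section \ref{sec:symplecto}.
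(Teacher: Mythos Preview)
Your proposal is correct and follows essentially the same route as the paper: identify small deformations with graphs of closed log one-forms via Theorem \ref{thm:lognbhd}, then use Lemma \ref{lem:exact} together with the tautological property $\sigma^*\lambda=\sigma$ to show that Hamiltonian-isotopy classes correspond to log cohomology classes. The only minor differences are that the paper handles the ``exact $\Rightarrow$ Hamiltonian'' direction by directly flowing along the Hamiltonian vector field $X_{\pi^*f}$ rather than appealing to Corollary \ref{cor:exact}, and it compares only against the zero section, eliding the reparametrisation $q$ that you correctly make explicit.
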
 

\begin{proof} Given the results on the log flux homomorphism, this proof proceeds exactly as for ordinary Lagrangians in an ordinary symplectic manifold.   
If we consider the graph of a small one-form $\alpha$ on $L$ in $U$, this is connected to $L$ by the smooth family of Lagrangians 
\[\phi:L\times [0,1]\rightarrow U, \phi(l,t)= \operatorname{Graph}(t\alpha)\vert_l, l\in L \]
If $\alpha=\dx f$ is exact, the Hamiltonian isotopy $\psi_t$ given by the flow of the Hamiltonian vector field $X_f$ maps $\psi_1: L\mapsto \operatorname{Graph}(\dx f)$.  

What is left to show: If the graph of a closed log one-form $\alpha\in \Gamma(T^*L(\log L\cap Z))$ is the image of $L$ under a Hamiltonian isotopy, $\alpha$ must have been exact. \\ 
We assume that there is a Hamiltonian isotopy $\{\psi_t\}$ such that $\psi_1(L)=\operatorname{Graph}(\alpha)$. We know that $\Flux(\{\psi_t\})=0$. According to Lemma \ref{lem:exact}, we have 
\[ \Flux(\{\psi_t\})= [\psi_t^*\lambda - \lambda] \Rightarrow \psi_1^*\lambda=\lambda + \dx g, g\in C^{\infty}(U).\]
Denote by $\iota: L \rightarrow T^*L(\log L\cap Z)$ the inclusion of $L$ as the zero section and view the section $\alpha$ as a map $\alpha: L \rightarrow T^*L(\log L\cap Z)$.
We have $\psi_1(L)=\operatorname{Graph}(\alpha)$, so $\alpha=\psi_1 \circ \iota$. \\
Now, the canonical one-form $\lambda$ on $T^*L(\log L\cap Z)$ has the property $\sigma^*\lambda=\sigma\ \forall \sigma \in \Omega^1(L,\log L\cap Z)$. So: $ \alpha = \alpha^*\lambda = \iota^*(\psi_1^*(\lambda))=\iota^*(\lambda + \dx g)= \dx \iota^*(g) $.
\end{proof} 

\begin{rmk} 
We can immediately make an analogous statement for compact Lagrangian branes in a stable generalized complex manifold which intersect the degeneracy locus $D$ transversely: For such branes, there is the analogous Lagrangian neighbourhood theorem by \cite{Cavalcanti2015a}, Theorem \ref{thm:lnbhd1}, and we can then apply exactly the same reasoning as above to conclude that small deformations of such a brane $(L,L\cap D)$ up to Hamiltonian isotopy are given by the first elliptic cohomology 
\[ H^1(L,\log\abs{L\cap D}). \] 
\end{rmk} 

\begin{rmk}
Lagrangians in logarithmic symplectic manifolds and generalized complex branes in stable generalized complex manifolds which intersect the degeneracy locus transversely are both examples of coisotropic submanifolds with respect to the associated Poisson structures. 
\cite{Cattaneo2005} investigate an $L_{\infty}$-algebroid structure on the normal bundle $NC$ of each coisotropic submanifold $C$, and \cite{Schatz2013} show that if the Poisson structure is \emph{fibrewise entire} on a tubular neighbourhood of $C$, small deformations of $C$ as a coisotropic submanifold are given by the degree-1 Maurer-Cartan elements of the $L_{\infty}$-algebroid. 
In both cases discussed here, the Lagrangian neighbourhood theorems show that $\omega^{-1}$ is indeed fibrewise entire, and we can show that the degree-1 Maurer-Cartan elements of the $L_{\infty}$-structure are just $\Omega^1_{\operatorname{cl}}(L,\log \partial L)$ or $\Omega^1_{\operatorname{cl}}(L,\log\abs{L\cap D})$ respectively. Thus these results from Poisson geometry provide an alternative way to obtain these deformation results. 
\end{rmk} 

\section{Real oriented blow-up of stable generalized complex manifolds} \label{sec:blow}
In this section we establish a connection between elliptic symplectic geometry and log symplectic geometry via the so-called \emph{real oriented blow-up}. In particular this allows us to relate every stable generalized complex manifold to a log symplectic manifold, and the Lagrangian branes to Lagrangian submanifolds with respect to the log symplectic structure. 

The real oriented blow-up of a compact submanifold $Y$ involves replacing this submanifold with its normal sphere bundle $S^1(NY)$, thus producing a manifold with boundary $[M;Y]$, a procedure that is well-defined up to diffeomorphism.\footnote{Recall that in a stable generalized complex manifold $(M,D)$ the radial residue of the elliptic symplectic form naturally lives on $S^1(ND)$, the normal sphere bundle to $D$.}
There is always a natural lift of vector fields on $M$ tangent to $Y$ to vector fields on $[M;Y]$ tangent to $\partial[M;Y]=S^1(NY)$. 

\begin{prop} \label{prop:blow}
\begin{enumerate}[label=(\roman*)]
\item If $E\rightarrow Y$ is a complex line bundle, its complex vector bundle structure induces a smooth free $U(1)$-action on the real oriented blow-up $\tilde{E}=[E;Y]$, which restricts to $\tilde{Y}:=\partial \tilde{E}$ s.t. $\beta\vert_{\tilde{Y}}:\tilde{Y}\rightarrow Y$ is a principal $U(1)$-bundle. 
\item If $\tilde{M}$ is a manifold with boundary $\tilde{D}$ s.t. there is a $U(1)$-principal bundle structure $\beta:\tilde{D}\rightarrow D$, there exists a smooth structure without boundary on 
\[ M:= (\tilde{M}\setminus \tilde{D})\sqcup D, \] 
which is canonical up to diffeomorphism and such that $[M;D]\cong \tilde{M}$. \\
$\beta$ extends to a $\beta: \tilde{M} \rightarrow M$ and $D$ has a complex normal bundle in $M$. \\ 
\item Let $(\tilde{M},\tilde{D})$ be a manifold with boundary $\partial\tilde{M}=\tilde{D}$ such that $\beta: \tilde{D}\rightarrow D$ is a principal $U(1)$-bundle. 
Then there is a canonical induced complex divisor on $(M,D)$ which is determined up to isomorphism. 
\end{enumerate}
\end{prop}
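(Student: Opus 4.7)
For part (i), I would fix a Hermitian metric on $E$ to introduce polar coordinates $(r,\phi)$ on each fiber, so that $[E;Y]$ is identified with the manifold with boundary obtained by gluing $S(E)\times[0,\infty)$ (normal sphere bundle times collar) to $E\setminus Y$ via $(e,r)\mapsto re$. Fiberwise complex multiplication $v\mapsto e^{i\theta}v$ becomes $\phi\mapsto\phi+\theta$, $r\mapsto r$, which extends verbatim to $r=0$, giving a smooth $U(1)$-action on $[E;Y]$ that is tangent to the boundary. Freeness off $\partial[E;Y]$ is just freeness on $E\setminus Y$, and on the boundary the action restricts to the standard free $S^1$-action on each fiber circle, exhibiting $\beta|_{\tilde Y}:\tilde Y\to Y$ as a principal $U(1)$-bundle.

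For part (ii), I would first obtain a $U(1)$-equivariant collar $c:\tilde D\times[0,\epsilon)\hookrightarrow\tilde M$ by averaging an arbitrary collar over the compact group $U(1)$. Form the associated complex line bundle $E:=\tilde D\times_{U(1)}\Cnum\to D$ (with $U(1)$ acting on $\Cnum$ by the defining representation); the map $(p,r)\mapsto[p,r]$ identifies the open disk sub-bundle $E_{<\epsilon}\subset E$ with the quotient of $\tilde D\times[0,\epsilon)$ by $U(1)$. Gluing $E_{<\epsilon}$ to $\tilde M\setminus\tilde D$ along the punctured collar (identified with $E_{<\epsilon}$ minus its zero section) defines a smooth structure on $M=(\tilde M\setminus\tilde D)\sqcup D$ in which $D$ has complex normal bundle $E$. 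Part (i) applied to $E\to D$ reproduces exactly this collar together with its boundary $\tilde D$, whence $[M;D]\cong\tilde M$. For canonicity, I would invoke the equivariant uniqueness of collars of $\tilde D$ (up to ambient isotopy) to show two such constructions yield diffeomorphic outcomes.

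For part (iii), on the tubular neighborhood $V\cong E$ constructed in (ii), take $U|_V=\pi^*E$ and let $s|_V$ be the tautological section $v\mapsto v$, which vanishes transversely along the zero section $D$. To globalize, shrink to $V'\Subset V$, use the nowhere vanishing $s$ on $V\setminus\overline{V'}$ to trivialize $U$ there, and then extend by the trivial line bundle with nonvanishing constant section $1$ over $M\setminus\overline{V'}$, gluing smoothly on the overlap. The resulting pair $(U,s)$ is a complex divisor whose isomorphism class is determined by the principal bundle $\tilde D\to D$, since any two global extensions of the local tautological datum differ by a nowhere vanishing smooth complex function, which is precisely an isomorphism of line bundle pairs. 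The main obstacle I anticipate throughout is the careful bookkeeping of smoothness at the collar: in (ii) one must check that the identification of the punctured collar with $E_{<\epsilon}\setminus D$ extends to a bona fide smooth atlas at $r=0$, and that different equivariant collars give identifications related by smooth diffeomorphisms of $M$ rather than only topological ones. This is the step where compactness of $U(1)$ and the equivariant tubular neighborhood theorem do the essential work, and where the blow-up picture of (i) serves as the indispensable local model for the smooth structure.
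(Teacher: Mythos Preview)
Your proposal is correct and, for parts (i) and (ii), follows essentially the same route as the paper: identify $[E;Y]$ with $S^1(E)\times\Rnum_{\geq 0}$ and extend the circle action; build $M$ by gluing the associated line bundle to $\tilde M\setminus\tilde D$ along a collar. Your use of averaging to produce a $U(1)$-equivariant collar is a nice explicit step that the paper leaves implicit (the paper simply extends the action to an arbitrary collar and defers well-definedness to the end).

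For part (iii) you take a genuinely different route. The paper first constructs the \emph{elliptic} divisor: choosing a hermitian metric $h$ on $ND$, the function $v\mapsto h(v,v)$ on $\operatorname{tot}(ND)$ has positive-definite normal Hessian along the zero section, hence defines an elliptic divisor, and the already-fixed co-orientation of $ND$ then promotes this to a complex divisor (unique up to isomorphism since $h$ is unique up to a positive scalar). You instead write down the complex divisor directly as $(\pi^*E,\text{tautological section})$ on the tubular neighbourhood and extend trivially. Both arguments are valid; yours is more economical, while the paper's makes the link to the elliptic picture explicit, which is what the surrounding theory (elliptic symplectic forms, Theorem~\ref{thm:blowdown}) actually uses. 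The uniqueness clauses also differ in flavour: the paper argues via rescalings of $h$ being isomorphisms of elliptic divisors, whereas you argue that two global extensions differ by a nowhere-vanishing complex function.
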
 
\begin{proof} \begin{enumerate}[label=(\roman*)]
\item Since $E \rightarrow Y$ is a complex line bundle, its structure group can be reduced to $U(1)$. Its sphere bundle $S^1(E)$ is in fact the associated principal $U(1)$-bundle associated to $E$, whose $U(1)$-action naturally extends to $[E;Y]\cong S^1(E)\times \Rnum_{\geq 0}$ as a free action. 
\item Consider the open cover of $\tilde{M}$ given by $\tilde{M}\setminus \tilde{D}$ and a collar neighbourhood of $\tilde{D}$, $\tilde{U}\cong \tilde{D}\times [0,1)$. Since $\tilde{D}$ has the structure of a $U(1)$-principal bundle, it comes with a free $U(1)$-action that we can extend to the collar neighbourhood (this depends on the choice of collar neighbourhood, of course).
 Collapse the $U(1)$-fibres in $\tilde{U}$ to obtain the quotient space $U$, which still carries a $U(1)$-action, now with fixed-point set $D$. $U$ can be viewed as a neighbourhood of the zero section of the complex vector bundle associated to $\beta:\tilde{D}\rightarrow D$. 
Glue $U$ to $\tilde{M}\setminus \tilde{D}$ using the same gluing functions as for $\tilde{U}$. This is $M$. From the description of the real oriented blow-up, it is clear that $\tilde{M}\cong [M;D]$. It is not hard to see that this is, up to diffeomorphism, independent of the choice of tubular neighbourhood. 
\item We have already established that the $U(1)$-action on $\tilde{D}$ induces a complex structure on $ND$, the normal bundle to $D$ in $M$. Pick a hermitian metric $h$ for this complex line bundle. Then the function 
\[ ND \rightarrow \Rnum, v \mapsto h(v,v) \] 
clearly defines an elliptic divisor for $D$ in $\operatorname{tot}(ND)$, and since we have already picked an orientation of $ND$, this induces a complex divisor on $\operatorname{tot}(ND)$ (the complex line bundle being $p^*ND\rightarrow \operatorname{tot}(ND)$, where $p:ND\rightarrow D$ is the vector bundle projection). 
$h$ is unique up to multiplication with a positive real function on $D$, but such a rescaling is merely an isomorphism of elliptic divisors. Thus the induced complex divisor is also unique up to isomorphism. 

We can embed a neighbourhood of the zero section in $ND$ into $M$ as a tubular neighbourhood of $D$, and extend the elliptic divisor defined by $h$ to the entirety of $M$ in some positive smooth way. Since the real line bundle associated to an elliptic divisor is trivial, this is again unique up to isomorphism. 
\end{enumerate} 
\end{proof} 
Note that the choice of a defining function for $\tilde{D}$ in $\tilde{M}$ and an extension of the $U(1)$-action to a collar neighbourhood of $\tilde{D}$  fix the choice of elliptic divisor on a tubular neighbourhood of $D$, if these are to be compatible with the blow-down map $\beta: \tilde{M}\rightarrow M$. 

\begin{figure}[h] \centering
\includegraphics[scale=0.7]{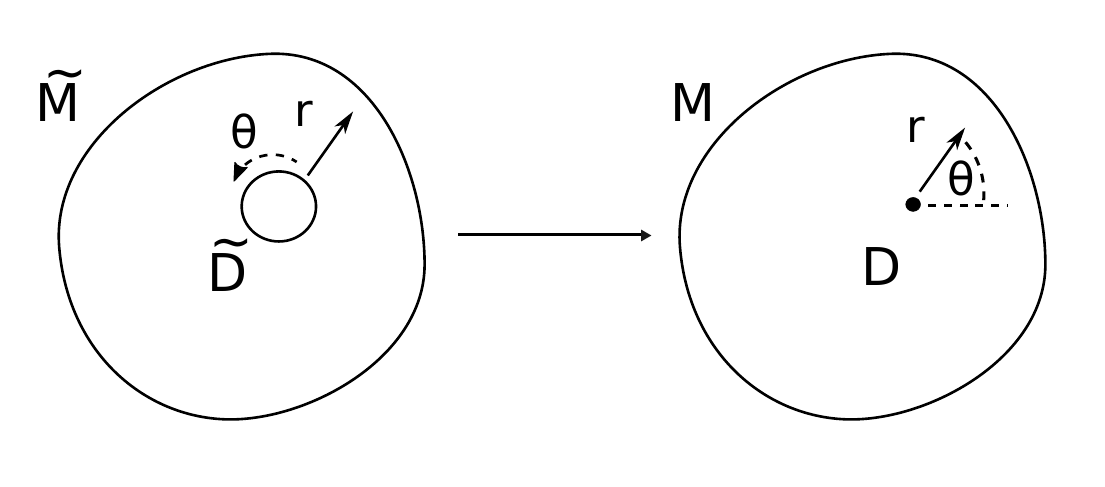}
\caption{Real oriented blow-up of the anticanonical divisor}
\end{figure} 

\begin{thm}\textbf{(Relation between stable generalized complex and log symplectic manifolds via real oriented blow-up I)} \label{thm:blowup11}\footnote{Note that this relation was first proposed by M. Gualtieri at Poisson 2010. The author's thesis is the first published instance of formal treatment and further development of the theory.}
Every stable generalized complex manifold $(M,D,\omega)$ can be related to a log symplectic manifold with boundary $(\tilde{M},\tilde{D},\tilde{\omega})$ via the real oriented blow-up of the anticanonical divisor. With $\beta: \tilde{M}\rightarrow M$ the blow-down map, 
\[ \tilde{\omega}=\beta^*(\omega),\]
which is a non-degenerate log two-form.  
\end{thm}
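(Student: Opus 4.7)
The plan is to reduce the theorem to a local computation near $D$, and then transfer $\omega$ to $\tilde{M}$ via a canonical isomorphism between the elliptic Lie algebroid on $M$ and the log Lie algebroid on $\tilde{M}$. Since $\beta: \tilde{M}\setminus \tilde{D} \to M\setminus D$ is a diffeomorphism and $\omega$ is smooth symplectic there, the content of the statement is entirely about smoothness and non-degeneracy of $\tilde{\omega}$ near the boundary $\tilde{D} = \partial \tilde{M}$.

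First I would work in local coordinates $(r,\theta, y_3,\ldots, y_{2n})$ near a point of $D$, where $(r,\theta)$ are polar coordinates on the normal bundle $ND$, the elliptic divisor is locally generated by $r^2$, and $TM(-\log \abs{D})$ is freely generated by $r\party{}{r}, \party{}{\theta}, \party{}{y_3}, \ldots, \party{}{y_{2n}}$. By the local normal form for stable generalized complex structures (Theorem 3.21 in \cite{Cavalcanti2015a}, cf.\ the example in Section \ref{sec:bwb}), we may arrange
\[
\omega = \frac{\dx r}{r}\wedge \dx y + \dx \theta \wedge \dx x + \sum_j \dx p_j \wedge \dx q_j
\]
in suitable such coordinates. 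In these same coordinates the real oriented blow-up is the half-space $\{r\geq 0\}$, with $r$ a boundary-defining function for $\tilde{D}=\{r=0\}$ and $\theta\in S^1$ promoted to a globally defined smooth coordinate.

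The central observation is that in these coordinates the log tangent bundle $T\tilde{M}(-\log \tilde{D})$ is freely generated by \emph{exactly the same} vector fields $r\party{}{r}, \party{}{\theta}, \party{}{y_j}$ as the elliptic tangent bundle. Consequently the differential of the blow-down extends across the boundary to a canonical Lie algebroid isomorphism
\[
d\beta: T\tilde{M}(-\log \tilde{D}) \xrightarrow{\ \sim\ } \beta^* TM(-\log \abs{D}),
\]
which agrees on $\tilde{M}\setminus \tilde{D}$ with the ordinary differential of $\beta$. I would then verify that this identification is coordinate-independent by using that $r\party{}{r}\vert_{\tilde{D}}$ depends only on the log structure, and correspondingly on the elliptic side $r\party{}{r}\vert_{D}$ depends only on the elliptic divisor; changes of complex framing of $ND$ act by smooth $U(1)$-valued functions which preserve both algebroids simultaneously, so the local identifications glue.

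Dualizing yields a natural map $\beta^*: \Omega^k(M,\log \abs{D}) \to \Omega^k(\tilde{M},\log \tilde{D})$ intertwining the differentials, so $\tilde{\omega}:=\beta^*\omega$ is a closed log two-form on $\tilde{M}$, and its non-degeneracy is immediate: under the algebroid isomorphism, the bundle map $\tilde{\omega}^{\flat}: T\tilde{M}(-\log \tilde{D}) \to T^*\tilde{M}(\log \tilde{D})$ is the pullback of the isomorphism $\omega^\flat: TM(-\log \abs{D}) \to T^*M(\log \abs{D})$. The main obstacle is the verification of canonicity and global smoothness of $d\beta$ across the boundary: one must ensure that the local algebroid identification glues consistently across different tubular neighborhoods of $D$, and is independent of the (non-unique) extension of the $U(1)$-action near $\tilde{D}$ used in Proposition \ref{prop:blow}. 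Once this naturality is established, the log symplectic property of $\tilde{\omega}$ follows by pure transport of structure.
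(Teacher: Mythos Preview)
Your approach is essentially the paper's own: the theorem is reduced to establishing that $\beta$ induces a fibrewise isomorphism between $T\tilde{M}(-\log \tilde{D})$ and $\beta^*TM(-\log\abs{D})$, after which closedness and non-degeneracy of $\tilde{\omega}=\beta^*\omega$ follow by transport of structure. The paper packages this as a lemma showing that the lift of elliptic vector fields sends local $C^\infty(M)$-frames $(r\partial_r,\partial_\theta,\partial_{y_i})$ to local $C^\infty(\tilde{M})$-frames, and then dualises.

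Two minor points. First, your invocation of the local normal form for $\omega$ (Theorem~3.21 in \cite{Cavalcanti2015a}) is unnecessary: the argument is about the Lie algebroids, not about $\omega$, and the frame-to-frame property holds in any adapted polar coordinates. Bringing in the normal form risks the appearance that you need $\omega$ to be standard in the same chart where the blow-up is standard, which is not required. Second, the global coherence you flag as ``the main obstacle'' is handled in the paper not by chasing coordinate changes but by choosing a $U(1)$-connection on $\tilde{D}\to D$, using it to split the relevant short exact sequences over $D$ and $\tilde{D}$ simultaneously, and then observing that the resulting bundle map is independent of the connection. This is cleaner than patching local identifications and is worth adopting.
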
 

This result is a consequence of the following lemma:
\begin{lem} Let $D=(U,s)$ be a complex divisor on a smooth manifold $M$; also denote its zero locus by $D$. Consider the real oriented blow-up $\tilde{M}:=[M;D],\tilde{D}:=\partial\tilde{M}$. 
\begin{enumerate}[label=(\roman*)]
\item The lift of vector fields associated with the real oriented blow-up induces a map 
\[ \beta^*: \Gamma(TM(-\log \abs{D})) \rightarrow \Gamma (T\tilde{M}(-\log \tilde{D})) \] 
which maps a local $C^{\infty}(M)$-basis to a local $C^{\infty}(\tilde{M})$-basis. 
\item 
There is a well-defined vector bundle morphism 
\[ \beta_*:T\tilde{M}(-\log \tilde{D})\rightarrow TM(-\log\abs{D}). \] 
\item This $\beta_*: T\tilde{M}(-\log \tilde{D})\rightarrow TM(-\log\abs{D})$ further induces a pullback 
\[ \beta^*: \Omega^{\bullet}(M,\log \abs{D}) \rightarrow \Omega^{\bullet}({\tilde{M}},\log \tilde{D}), \] 
which maps local bases to local bases,
and a vector bundle morphism 
\[ \beta_*: \wedge^k T^*\tilde{M}(\log \tilde{D}) \rightarrow \wedge^k T^*M(\log \abs{D}). \] 
\item The pullback $\beta^*$ induces an isomorphism on cohomology: 
\[ \beta^*: H^{\bullet}(M,\log\abs{D})\rightarrow H^{\bullet}(\tilde{M},\log\tilde{D}). \] 
\end{enumerate} 
\end{lem}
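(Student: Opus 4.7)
The plan is to establish parts (i)--(iii) by a direct local coordinate computation around $D$, then prove part (iv) via a Mayer--Vietoris argument that reduces to a two-dimensional model.

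Near a point of $D$, trivialise the complex divisor by a defining function $w = re^{i\theta}$, with smooth coordinates $(x = r\cos\theta, y = r\sin\theta, y_3, \ldots, y_{2n})$ on $M$ and smooth coordinates $(r, \theta, y_3, \ldots, y_{2n})$ on $\tilde{M}$ with $r \geq 0$ and $\tilde{D} = \{r = 0\}$. A direct check shows that $TM(-\log\abs{D})$ admits the local $C^\infty(M)$-basis $(r\partial_r, \partial_\theta, \partial_{y_i})$, and that $T\tilde{M}(-\log\tilde{D})$ admits the local $C^\infty(\tilde{M})$-basis given by the same formal expressions. For (i), the canonical lift of a vector field tangent to $D$ sends an elliptic vector field $a\, r\partial_r + b\, \partial_\theta + \sum c_i\, \partial_{y_i}$ on $M$ to the log vector field on $\tilde{M}$ obtained by pulling the coefficients back along $\beta$, so the above basis lifts to the corresponding basis on $\tilde{M}$. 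For (ii), pointwise identification of the two standard bases promotes this lift to a fibrewise isomorphism $\beta_*: T\tilde{M}(-\log\tilde{D}) \to TM(-\log\abs{D})$ covering $\beta$; independence from the choice of defining function $w$ follows from the intrinsic characterisations of the log and elliptic tangent bundles. For (iii), because $\beta_*$ is fibrewise a linear isomorphism it induces a canonical bundle morphism $\beta_*: \wedge^k T^*\tilde{M}(\log\tilde{D}) \to \wedge^k T^*M(\log\abs{D})$ on exterior powers of the cotangent bundles, which sends $(dr/r, d\theta, dy_i)$ to the identically written basis on the target side; taking sections yields the pullback $\beta^*$ on log/elliptic differential forms, again basis to basis.

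For part (iv), use the open cover $U_1 = M \setminus D$ and $U_2$ a tubular neighbourhood of $D$ in $M$, lifted to $\tilde{U}_1 = \tilde{M}\setminus\tilde{D}$ and $\tilde{U}_2 = \beta^{-1}(U_2)$. Over $U_1$ and $U_1 \cap U_2$, $\beta$ restricts to a diffeomorphism and log/elliptic forms reduce to ordinary smooth forms, so $\beta^*$ induces an isomorphism on the associated cohomologies. By a Mayer--Vietoris argument and the five lemma, it then remains to show that $\beta^*: H^\bullet(U_2, \log\abs{D}) \to H^\bullet(\tilde{U}_2, \log\tilde{D})$ is an isomorphism. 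Taking $U_2$ to be (a neighbourhood of the zero section in) the normal bundle $ND$ and $\tilde{U}_2 \cong S^1(ND) \times [0, 1)$, a K\"unneth-style reduction along the fibres of the base $D$ brings the question to the two-dimensional model $M = \Cnum$, $D = \{0\}$, $\tilde{M} = [0, \infty) \times S^1$, $\tilde{D} = \{0\} \times S^1$. In this model a direct calculation (or alternatively a residue long exact sequence comparing to the ordinary de Rham cohomology of $\tilde{M} \simeq S^1$ and $\tilde{D} \simeq S^1$) shows that in each case $H^0 = \Rnum$, $H^1 = \Rnum^2$ generated by the classes of $dr/r$ and $d\theta$, and $H^2 = \Rnum$ generated by $(dr/r)\wedge d\theta$, with $\beta^*$ matching generators.

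The principal obstacle is to make the local comparison for (iv) rigorous. The elliptic complex is not straightforwardly homotopy-invariant near $D$, since the naive radial retraction onto $D$ passes through the singular locus, so a Poincar\'e-style lemma for elliptic forms must be established directly rather than inherited from smooth de Rham theory; similar care is required for the K\"unneth reduction along the fibres of $D$. The cleanest route is likely to exploit the short exact sequences relating the smooth, log/elliptic, and residue complexes so as to express each local cohomology in terms of ordinary de Rham cohomology of $M \setminus D = \tilde{M}\setminus\tilde{D}$ and of $D$ (respectively $\tilde{D}$), where $\beta^*$ is manifestly an isomorphism. Once these local Poincar\'e and K\"unneth inputs are in hand, the global assembly via Mayer--Vietoris is routine.
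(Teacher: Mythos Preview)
Your treatment of parts (i)--(iii) is essentially the paper's approach: both work in the local basis $(r\partial_r,\partial_\theta,\partial_{y_i})$ and observe that it serves simultaneously as a frame for the elliptic and for the log tangent bundle. The paper is more careful about (ii): rather than saying ``pointwise identification of the two standard bases'' and appealing to intrinsic characterisations, it fixes a hermitian metric on $ND$ and a $U(1)$-connection on $\tilde{D}\to D$, uses these to split the short exact sequences
\[
0\to \mathfrak{t}'\to T\tilde{D}\to\beta^*(TD)\to 0,\qquad
0\to \mathbf{R}\oplus\mathfrak{t}\to TM(-\log|D|)\vert_D\to TD\to 0,
\]
defines $\beta_*$ via these splittings, and then checks that the result is independent of the connection. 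Your shortcut is defensible but would need an explicit check that the identification is stable under change of defining function $w\mapsto gw$; this is precisely the content the paper's connection argument packages.

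For part (iv) your route is genuinely different. The paper does not use Mayer--Vietoris or a local Poincar\'e lemma at all; instead it invokes the known global decompositions
\[
H^k(M,\log|D|)\cong H^k(M\setminus D)\oplus H^{k-1}(S^1ND),\qquad
H^k(\tilde{M},\log\tilde{D})\cong H^k(\tilde{M})\oplus H^{k-1}(\tilde{D}),
\]
(the first from the elliptic theory in \cite{Cavalcanti2015a}, the second the standard Mazzeo--Melrose-type result for log cohomology), rewrites the second as $H^k(\tilde{M}\setminus\tilde{D})\oplus H^{k-1}(\tilde{D})$ using that $\tilde{M}$ retracts onto its interior, and observes that $\beta^*$ is compatible with these splittings via restriction and residue, so that the bottom map in the resulting commutative square is literally the identity on $H^k(M\setminus D)\oplus H^{k-1}(S^1ND)$. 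This is much shorter given the cited inputs. Your Mayer--Vietoris strategy is sound in principle, but as you yourself note, the ``K\"unneth-style reduction along the fibres of $D$'' is not free when $ND$ is nontrivial, and you would end up re-deriving the residue decomposition locally anyway. Your closing suggestion --- to compare via the short exact sequences linking the log/elliptic complexes to smooth forms and residues --- is exactly what the paper does, only globally rather than patch by patch.
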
 
\begin{proof}
\begin{enumerate}[label=(\roman*)]
\item All vector fields which are tangent to $D$ lift to logarithmic vector fields w.r.t $\tilde{D}$ on the real oriented blow-up $\tilde{M}$, and elliptic vector fields are certainly tangent to $D$. 
On $\tilde{M}\setminus \tilde{D}$, $\beta$ is a diffeomorphism, so local bases of vector fields get mapped to each other. Coordinates $(y^i,\theta,r)$ on a neighbourhood of $D$ pull back to coordinates on a neighbourhood of $\tilde{D}$. The associated local basis of elliptic vector fields is 
\[ \left(r\party{}{r},\party{}{\theta},\party{}{y^i}\right),\] 
which clearly lift to a local basis of log vector fields around $\tilde{D}$. 
\item Since $\beta\vert_{\tilde{M}\setminus\tilde{D}}$ is a diffeomorphism, and $T\tilde{M}(-\log\tilde{D}), TM(-\log \abs{D})$ are isomorphic to $T\tilde{M},TM$ outside $\tilde{D},D$ respectively, it suffices to define $\beta_*:T\tilde{M}(-\log\tilde{D})\rightarrow TM(-\log\abs{D})$ on $\tilde{D}$ and ensure that it forms a smooth vector bundle isomorphism together with the standard definition away from $\tilde{D}$. \\ 
Choose a tubular neighbourhood of $D$; write $M=ND$, and $p: ND\rightarrow D$ for the projection of the normal bundle. Fix a hermitian metric for $ND$. Also choose a $U(1)$-principal connection $\alpha$ for $\beta\vert_{\tilde{D}}: \tilde{D}\rightarrow D$. 
This immediately defines a complex linear connection on the associated complex line bundle $ND$, and splittings of the short exact sequences of vector bundles 
\begin{align} 
0 \rightarrow \mathfrak{t}' \rightarrow &T\tilde{D} \rightarrow \beta^*(TD) \rightarrow 0 \label{equ:conn1} \\ 
0 \rightarrow VD \rightarrow &TM \rightarrow p^*(TD) \rightarrow 0 \\ 
0 \rightarrow \mathbf{R}\oplus \mathfrak{t} \rightarrow &TM(-\log \abs{D})\vert_D \rightarrow TD \rightarrow 0 \label{equ:conn3}
\end{align} 
The line bundle $\mathfrak{t}'$ is the trivial line bundle spanned by the $U(1)$-action vector field on $\tilde{D}$, $\party{}{\theta}$. Similarly, $\mathfrak{t}$ is the trivial line bundle spanned by the elliptic vector field $\party{}{\theta}\vert_D$ on $D$, and $\mathbf{R}$ is the trivial line bundle given by the elliptic vector field $r\party{}{r}$ restricted to $D$. 

The splitting of the third sequence (\ref{equ:conn3}) is induced by the splitting of the second: The connection is unitary, so the associated horizontal lift will lift any section of $p^*(TD)$ to an elliptic vector field. For any $X\in T_dD$ we can choose an extension to a section in $\Gamma(TD)$, which in turn lifts to $\bar{X}\in \Gamma(TM(-\log\abs{D})\vert_D)$. The splitting of the sequence is defined by 
\[X \mapsto \bar{X}\vert_d, \] 
which is independent of the chosen extension. Thus we obtain an isomorphism 
\[ TM(-\log \abs{D})\vert_D \cong \mathbf{R}\oplus \mathfrak{t}\oplus TD \] 

Upon choice of hermitian metric on $ND$, the real oriented blow-up of $ND$ along the zero section $[ND;D]$ is canonically identified with $\tilde{D}\times [0,\infty)$ ($\tilde{D}=S^1ND$). $T\tilde{M}(-\log \tilde{D})\vert_{\tilde{D}}$ is canonically isomorphic to $\tilde{\mathbf{R}}\oplus T\tilde{D}$, with $\tilde{\mathbf{R}}$ the trivial line bundle spanned by $r\party{}{r}\vert_{\tilde{D}}$. The $U(1)$-principal connection for $\tilde{D}$ with its associated splitting (\ref{equ:conn1}) then defines an isomorphism 
\[ T\tilde{M}(-\log\tilde{D})\vert_{\tilde{D}} \cong \tilde{\mathbf{R}} \oplus \mathfrak{t}' \oplus \beta^*(TD) \] 

There is now clearly a vector bundle morphism $\tilde{\mathbf{R}}\oplus \mathfrak{t}'\oplus \beta^*(TD) \rightarrow \mathbf{R}\oplus \mathfrak{t}\oplus TD$ over $\beta\vert_{\tilde{D}}$ given by 
\begin{align*} 
\left.r\party{}{r}\right\vert_{\tilde{d}} &\mapsto \left.r\party{}{r}\right\vert_{\beta(\tilde{d})} \\ 
\left.\party{}{\theta}\right\vert_{\tilde{d}} &\mapsto \left.\party{}{\theta}\right\vert_{\beta(\tilde{d})} \\ 
(X,\tilde{d}) \in \beta^*(TD)_{\tilde{d}} &\mapsto X \in T_{\beta(\tilde{d})}D 
\end{align*} 
which in turn allows us to define 

\begin{minipage}{\linewidth} \centering
\begin{tikzpicture}[scale=0.75]
\draw (0,0)node{$T\tilde{M}(-\log\tilde{D})\vert_{\tilde{D}}$} (5,0)node{$\tilde{\mathbf{R}}\oplus \mathfrak{t}'\oplus \beta^*(TD)$} ; 
\draw (0,-2)node{$TM(-\log D)\vert_D$} (5,-2)node{$\mathbf{R}\oplus \mathfrak{t} \oplus TD$} ; 
\draw[->] (1.7,0)--(3.2,0); \draw[->] (5,-0.5)--(5,-1.5); 
\draw[->] (0,-0.5)--(0,-1.5); \draw[->] (1.7,-2)--(3.5,-2) ; 
\draw (-0.5,-1)node{$\beta_*$} ;
\draw(2.5,0.3)node{$\cong$} (2.5,-1.7)node{$\cong$};
\end{tikzpicture} 
\end{minipage} 
This joins smoothly with the standard definition of $(\beta\vert_{\tilde{M}\setminus\tilde{D}})_*: \tilde{M}\setminus \tilde{D} \rightarrow M\setminus D$ to give 
\[ \beta_*: T\tilde{M}(-\log \tilde{D})\rightarrow TM(-\log\abs{D}) \] 
The way we have defined this map, the definition of $\beta$ a priori depends on the choice of $U(1)$-principal connection. But since we have used the same connection to define the isomorphism in both the upper and the lower line of the above diagram, $\beta_*$ is invariant under a change of this connection. 

\item These morphisms can be defined using the ones in (i) and (ii): If $\alpha\in \Omega^k_M(-\log \abs{D})$ and $\tilde{X}_1,\dots, \tilde{X}_k \in T\tilde{M}(-\log \tilde{D})$, 
\[ \beta^*(\alpha)(\tilde{X}_1,\dots,\tilde{X}_k):= \alpha(\beta_*(\tilde{X}_1),\dots,\beta_*(\tilde{X}_k)) \] 
is a well-defined map of differential complexes. It is easy to check that local bases get mapped to local bases using the usual coordinates around $D$, $(y^i,\theta,r)$.  \\ 
Similarly, if $\tilde{\alpha} \in \wedge^k T^*M(\log \tilde{D})$, we can define a pointwise pushforward using the lift of elliptic to log vector fields: Let $X_1,\dots,X_k\in \Gamma(TM(-\log\abs{D}))$.
\[ \beta_*(\alpha)(X_1,\dots,X_k):= \alpha(\beta^*(X_1),\dots,\beta^*(X_k)). \] 
\item Recall that the following results for the logarithmic and elliptic de Rham cohomology are known: 
\begin{align*} 
H^k(M,\log\abs{D})&\cong H^k(M\setminus D) \oplus H^{k-1}(S^1 ND), [\alpha]\mapsto ([\alpha\vert_{M\setminus D}], [\res_r \alpha]) \\ 
H^k(\tilde{M},\log \tilde{D}) &\cong H^k(\tilde{M})\oplus H^{k-1}(\tilde{D}), [\tilde{\alpha}] \mapsto ([\tilde{\alpha}-s(\res\tilde{\alpha})],[\res \tilde{\alpha}]), \\ 
\text{where }s: H^{k-1}(\tilde{D})&\rightarrow H^k(\tilde{M},\log \tilde{D}) \text{ is some section of } \res 
\end{align*} 
$\tilde{M}$ is a manifold with boundary $\partial\tilde{M}=\tilde{D}$, and so $H^k(\tilde{M})\cong H^k(\tilde{M}\setminus \tilde{D})\cong H^k(M\setminus D)$.

\[ H^k(\tilde{M},\log \tilde{D}) \rightarrow H^k(\tilde{M}\setminus \tilde{D})\oplus H^{k-1}(\tilde{D}), [\tilde{\alpha}]\mapsto ([\tilde{\alpha}\vert_{\tilde{M}\setminus \tilde{D}}],[\res \tilde{\alpha}]) \] 
is an isomorphism, since we can map 
\begin{align*} [\tilde{\alpha}] &\mapsto ([\tilde{\alpha}-s(\res\tilde{\alpha})],[\res \tilde{\alpha}]) \\
&\mapsto \left([\tilde{\alpha}-s(\res\tilde{\alpha})]\vert_{\tilde{M}\setminus\tilde{D}},[\res\tilde{\alpha}]\right) \\ 
&\mapsto \left([\tilde{\alpha}]\vert_{\tilde{M}\setminus\tilde{D}},[\res\tilde{\alpha}]\right) 
\end{align*} 
Each of these maps is an isomorphism on cohomology, thus the composition is. 

Now, we obtain a commutative diagram 

\begin{minipage}{\linewidth} \centering
\begin{tikzpicture}[scale=0.8]
\draw[->] (1.5,0)--(3.5,0); \draw[->] (5,-0.5)--(5,-1.5) ; 
\draw[->] (0,-0.5)--(0,-1.5); \draw[->] (2.4,-2)--(3.2,-2); 
\draw (0,0)node{$H^k(M,\log\abs{D})$} (5,0)node{$H^k(\tilde{M},\log \tilde{D}$)} (-0.5,-2)node{$H^k(M\setminus D)\oplus H^{k-1}(S^1ND)$} (5.5,-2)node{$H^k(\tilde{M}\setminus \tilde{D})\oplus H^{k-1}(\tilde{D})$} ;
\draw (2.7,-1.5)node{Id} (2.7,0.5)node{$\beta^*$}; 
\draw (-0.5,-1)node{$\cong$} (5.5,-1)node{$\cong$};
\end{tikzpicture} 
\end{minipage} 

Three of the morphisms in this diagram are isomorphisms, so $\beta^*$ is one as well. 
\end{enumerate}
\end{proof} 
Theorem \ref{thm:blowup11} follows immediately from this result. 

The following theorem illustrates the converse relationship: 

\begin{thm} \label{thm:blowdown}\textbf{(Relation between stable generalized complex and log symplectic manifolds via real oriented blow-up II)}
Let $(\tilde{M},\tilde{D}=\partial\tilde{M},\tilde{\omega})$ be a real logarithmic symplectic manifold with a $U(1)$-principal bundle structure $\beta: \tilde{D} \rightarrow D$ and associated blow-down $\beta: (\tilde{M},\tilde{D})\rightarrow (M,D)$ (see Proposition \ref{prop:blow}, (ii)). Assume that 
\begin{enumerate} 
\item $i_{\party{}{\theta}} (\res \tilde{\omega}) = 0$, where $\party{}{\theta}$ is the action vector field of the $U(1)$-action on $\tilde{D}$. 
\item $\dx \left( i_{\party{}{\theta}} \tilde{\omega}\middle\vert_{\tilde{D}} \right) = 0$. Note that the first assumption implies that $i_{\party{}{\theta}}\tilde{\omega}\vert_{\tilde{D}}$ is actually a smooth one-form on $\tilde{D}$, so its exterior derivative on $\tilde{D}$ is defined. 
\end{enumerate} 
Then $(M,D)$ carries and induced elliptic divisor and $\tilde{\omega}$ induces a gauge equivalence class of stable generalized complex structures $\omega$ with anticanonical divisor $D$. 
\end{thm}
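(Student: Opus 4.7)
The plan is to produce the elliptic divisor on $(M,D)$, construct a closed non-degenerate elliptic $2$-form $\omega$ on $M$ with vanishing elliptic residue, and then invoke Theorem~3.7 of \cite{Cavalcanti2015a} to promote $\omega$ to a gauge equivalence class of stable generalized complex structures whose anticanonical divisor is $D$. The first stage is immediate from Proposition~\ref{prop:blow}(iii): the $U(1)$-principal structure on $\tilde D$ canonically yields a complex, and hence elliptic, divisor on $(M,D)$ up to isomorphism.

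For the construction of $\omega$ I would work locally near $\tilde D$ in coordinates $(r,\theta,y^i)$ with $\tilde D=\{r=0\}$ and $\partial_\theta$ generating the $U(1)$-action, expanding $\tilde\omega$ in the log basis $\{\tfrac{\dx r}{r}\wedge \dx\theta,\ \tfrac{\dx r}{r}\wedge \dx y^i,\ \dx\theta\wedge \dx y^i,\ \dx y^i\wedge \dx y^j\}$. Condition~$1$ translates to the coefficient of $\tfrac{\dx r}{r}\wedge \dx\theta$ vanishing on $\tilde D$; together with $\dx\tilde\omega=0$ and Cartan's identity, this gives $L_{\partial_\theta}\res\tilde\omega=0$, so $\res\tilde\omega$ is basic and descends to a smooth closed $1$-form on $D$ (the radial residue of the sought $\omega$). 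Condition~$2$ makes $i_{\partial_\theta}\tilde\omega|_{\tilde D}$ closed on $\tilde D$, hence locally of the form $\dx\phi$; extending $\phi$ to $\tilde M$ and applying the smooth closed $B$-transform $\tilde\omega\mapsto\tilde\omega+\dx\phi\wedge \dx\theta$, I arrange that $i_{\partial_\theta}\tilde\omega|_{\tilde D}=0$ as well. At this point $i_{\partial_\theta}\tilde\omega$ is a smooth $1$-form on a collar neighbourhood of $\tilde D$, so averaging $\tilde\omega$ over the $U(1)$-action on that collar differs from $\tilde\omega$ by a further smooth exact $2$-form (again an absorbable gauge term) and produces a $U(1)$-invariant log form $\bar{\tilde\omega}$. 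Using the splittings $T\tilde M(-\log\tilde D)|_{\tilde D}\cong\tilde{\mathbf R}\oplus\mathfrak t'\oplus\beta^*(TD)$ and $TM(-\log\abs{D})|_D\cong\mathbf R\oplus\mathfrak t\oplus TD$ established in the preceding lemma, $\bar{\tilde\omega}$ matches $\beta^*\omega$ for a uniquely determined elliptic $2$-form $\omega$ on a tubular neighbourhood of $D$; on $M\setminus D$ take $\omega=(\beta^{-1})^*\tilde\omega$. The local choices of $\phi$ differ by locally constant functions on overlaps, so the accumulated $B$-transforms assemble into a globally defined gauge class.

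Closedness and non-degeneracy of $\omega$ then transport from those of $\tilde\omega$ via the injectivity of $\beta^*$ and the fibrewise surjectivity of $\beta_*$ from the preceding lemma, while $\res_e\omega=0$ is exactly condition~$1$. Theorem~3.7 of \cite{Cavalcanti2015a} then yields the desired gauge equivalence class of stable generalized complex structures with anticanonical divisor $D$.

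The hardest step will be the local construction of $\omega$, and in particular the subtlety that smoothness on $M$ near $D$ is stricter than $U(1)$-invariance along $\tilde D$: smooth functions on $M$ must descend from smooth functions of the local complex coordinate $z=a+ib$, not merely from polar data $(r,\theta)$. I expect to handle this via a Borel-type construction matching $\omega$ to $\bar{\tilde\omega}$ to all orders in $r$ at $D$, absorbing the formal-in-$r$ discrepancies into the gauge freedom furnished by smooth closed $2$-forms on $\tilde M$, with a final Moser interpolation between $\beta^*\omega$ and the modified $\tilde\omega$ producing the genuine log-$B$-equivalence.
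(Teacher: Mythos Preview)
Your overall outline is reasonable and shares several ingredients with the paper's proof, but there is a genuine gap in the B-transform step that breaks the argument.

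You propose to apply the modification $\tilde\omega\mapsto\tilde\omega+\dx\phi\wedge\dx\theta$ to arrange $i_{\partial_\theta}\tilde\omega|_{\tilde D}=0$. Combined with condition~1 (which already gives $i_{\partial_\theta}\res\tilde\omega=0$, i.e.\ $\tilde\omega(\partial_\theta,r\partial_r)|_{\tilde D}=0$), this forces $i_{\partial_\theta}\tilde\omega$ to vanish as a \emph{log} one-form at every point of $\tilde D$: indeed, on the collar the normal form gives $i_{\partial_\theta}\tilde\omega=\tilde\Omega_R$, and you have set $\tilde\Omega_R=0$. Hence $\partial_\theta$ lies in the kernel of the modified $\tilde\omega$ as a bilinear form on $T\tilde M(-\log\tilde D)|_{\tilde D}$, so the modified form is no longer log symplectic. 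Equivalently, after blow-down the putative elliptic form would satisfy both $\res_e\omega=0$ and $i_{\partial_\theta}\omega|_D=0$, which is incompatible with non-degeneracy on $TM(-\log\abs{D})|_D$. The one-form $\tilde\Omega_R=i_{\partial_\theta}\tilde\omega|_{\tilde D}$ is not an artefact to be gauged away: it is precisely what becomes the $\dx\theta\wedge\Omega_R$ term in the elliptic symplectic form and is essential for non-degeneracy.

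The paper's argument avoids this by never attempting to kill $\tilde\Omega_R$. Instead, it shows from conditions~1 and~2 that both $\tilde\Omega_I=\res\tilde\omega$ and $\tilde\Omega_R$ are horizontal and $U(1)$-invariant on $\tilde D$, hence basic, descending to closed one-forms $\Omega_I,\Omega_R$ on $D$. The key manoeuvre for the smoothness issue you identify is then different from your Borel/gauge proposal: the paper averages $\tilde\omega$ over an extension of the $U(1)$-action to a collar, applies Moser to the pair $(\tilde\omega,\bar\omega)$ (which agree on $\tilde D$), and uses the resulting diffeomorphism $\phi$ to \emph{replace the $U(1)$-action} by the one generated by $\phi_*\partial_\theta$, with respect to which the \emph{original} $\tilde\omega$ is exactly invariant on the collar. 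This sidesteps the problem that a merely $U(1)$-invariant log form need not descend to a smooth elliptic form, because the blow-down $M$ is defined using this adjusted $U(1)$-action, and $\tilde\omega$ is then literally $\beta^*$ of the elliptic form $\frac{\dx r}{r}\wedge\Omega_I+\dx\theta\wedge\Omega_R+\sigma$.
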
 
\begin{proof} 
Since $\beta: \tilde{D}\rightarrow D$ carries the structure of a $U(1)$-principal bundle, $ND$ is a complex vector bundle and thus in particular oriented. 
Let $\tilde{\pi}=\tilde{\omega}^{-1}$ be the Poisson structure associated to the real logarithmic symplectic structure on $\tilde{M}$. $\tilde{\pi}^n \in \Gamma(\wedge^{2n}T\tilde{M})$ is then a section that vanishes transversely on $\tilde{D}$. \\ 
\textbf{Claim:} $\beta_*\tilde{\pi}^n \in \Gamma(\wedge^{2n}TM)$ defines an elliptic divisor with vanishing locus $D\subset M$. 
Clearly, the pointwise pushforward of $\tilde{\pi}^n$ is defined, we only need to ensure that this actually gives a smooth section, and that it has a positive definite normal Hessian on $D$. 

According to the normal form theorem for log symplectic forms, we can choose a collar neighbourhood around $\tilde{D}$ with coordinate $r$ such that the log symplectic form $\tilde{\omega}$ takes the form 
\[ \tilde{\omega} = \frac{\dx r}{r} \wedge \tilde{\Omega}_I + \Sigma, \] 
where $\Sigma$ is a closed two-form on $\tilde{D}$ and $\tilde{\Omega}_I= \res \tilde{\omega}$ is a closed one-form on $\tilde{D}$ (which are pulled back to the collar neighbourhood). \\ 
By assumption, $i_{\party{}{\theta}} (\res \tilde{\omega}) = 0$, so 
\[ L_{\party{}{\theta}} \tilde{\Omega}_I = 0 \] 
Furthermore, $i_{\party{}{\theta}} (\res \tilde{\omega}) = 0$ implies that $i_{\party{}{\theta}}\tilde{\omega}\vert_{\tilde{D}}=i_{\party{}{\theta}}\Sigma=: \tilde{\Omega}_R$. By assumption, this form is closed, and $L_{\party{}{\theta}}\tilde{\Omega}_R= 0$. 

Taken together, we obtain that $\tilde{\Omega}_I,\tilde{\Omega}_R$ are \emph{horizontal} one-forms on $\tilde{D}$, which are invariant under the $U(1)$-action. This implies that they are pulled back from smooth (closed) one-forms $\Omega_I,\Omega_R$ on $D$. 

The next step is to show that given the above, the $U(1)$-action on $\tilde{D}$ can always be extended to a collar neighbourhood $\tilde{U}=\tilde{D}\times [0,1)$ in such a way that $L_{\party{}{\theta}} \tilde{\omega}=0$ on all of $\tilde{U}$. We begin by extending the $U(1)$-action to $\tilde{U}$ as a free action in some way. (This amounts to fixing the diffeomorphism $\tilde{D}\times [0,1) \cong \tilde{U}$.) Assume $L_{\party{}{\theta}}\tilde{\omega} \neq 0$. Consider the family of logarithmic forms 
\[ \tilde{\omega}_t:= (e^{it})^*\tilde{\omega}, t \in [0,2\pi). \] 
Here $(e^{it})^*$ is the pullback with respect to the $U(1)$-action diffeomorphism. \\ 
We can average over $t$ to obtain the $U(1)$-invariant log form 
\[ \bar{\omega} = \frac{1}{2\pi}\int_{S^1} \tilde{\omega}_t \dx t \] 
We have
\[\left. \frac{\dx}{\dx t}\right\vert_{t=0} \tilde{\omega}_t = L_{\party{}{\theta}} \tilde{\omega} = \dx\left(i_{\party{}{\theta}} \tilde{\omega}\right) \] 
Since we assumed $\dx\left(i_{\party{}{\theta}} \tilde{\omega}\vert_{\tilde{D}}\right) = 0$, we obtain $\tilde{\omega}_t\vert_{\tilde{D}}=\tilde{\omega}\vert_{\tilde{D}}=\bar{\omega}\vert_{\tilde{D}}$. In particular, $\bar{\omega}$ is also non-degenerate, i.e. a log symplectic form, at least upon restriction to a smaller collar neighbourhood of $\tilde{D}$. 
The difference $\tilde{\omega}-\bar{\omega}$ is a smooth two-form which vanishes on $\tilde{D}$. The blown-up locus $\tilde{D}$ is a deformation retract of its collar neighbourhood, so $\tilde{\omega}-\bar{\omega}=\dx \alpha$, with $\alpha$ a smooth one-form on the collar neighbourhood of $\tilde{D}$. \\ 
Since $\tilde{D}$ was assumed to be compact and $\tilde{\omega}\vert_{\tilde{D}}=\bar{\omega}\vert_{\tilde{D}}$, we can apply the Moser argument to the family of non-degnerate (on a small neighbourhood of $\tilde{D}$) log forms 
\[ \omega'_s:= s \tilde{\omega} + (1-s)\bar{\omega}, s\in [0,1] \] 
The logarithmic vector field $X_s:= (\omega'_s)^{-1}(\alpha)$ integrates to an isotopy $\phi_s$ with 
\[ \phi_s^*(\omega'_s)=\bar{\omega},\ \phi_1^*(\tilde{\omega}) = \bar{\omega}. \] 
Write $\phi:=\phi_1$ for this diffeomorphism. We have: 
\[ 0=L_{\party{}{\theta}} \phi^*(\tilde{\omega})= \phi^*\left(L_{\phi_*\party{}{\theta}} \tilde{\omega}\right) \Rightarrow L_{\phi_*\party{}{\theta}}\tilde{\omega} = 0 \] 
Since $\phi$ is a diffeomorphism, $\phi_*\party{}{\theta}$ is again the action vector field of a $U(1)$-action on a collar neighbourhood of $\tilde{D}$, and this is the extension of the $U(1)$-action we have been looking for. 

Locally on an open set in $\tilde{D}$, we can now write 
\[ \Sigma = \dx \theta \wedge \tilde{\Omega}_R + \tilde{\sigma}, \text { where } i_{\party{}{\theta}} \tilde{\sigma} = 0, \] 
so on a neighbourhood near the boundary: 
\[ \tilde{\omega} = \frac{\dx r}{r} \wedge \tilde{\Omega}_I + \dx \theta \wedge \tilde{\Omega}_R + \tilde{\sigma},  \]
where $\theta$ is chosen such that $L_{\party{}{\theta}}\tilde{\omega}=0$ on the entire collar neighbourhood.  
$\tilde{\sigma}$ is a horizontal, closed two-form on $\tilde{D}$, i.e. it is also the pullback of a closed two-form $\sigma$ on $D$.  
\\
In particular, this implies that $\beta_*\tilde{\pi}^n$ is smooth on $D$. 
The normal Hessian of $\beta_*\tilde{\pi}^n$ as a section of $\wedge^{2n}TM$ is clearly positive definite, since $\tilde{\pi}^n$ itself vanishes transversely, and we obtain an elliptic divisor $(\beta_*\tilde{\pi}^n, \wedge^{2n} TM)$ with vanishing locus $D$, which is co-oriented. 

Using the local expression for $\tilde{\omega}$ on a collar neighbourhood established above, and the established fact that $\tilde{\Omega}_I=\beta^*\Omega_I, \tilde{\Omega}_R=\beta^*(\Omega_R),\tilde{\sigma}=\beta^*(\sigma)$ for smooth forms on $D$, it is clear that $\tilde{\omega}$ is the pullback of 
\[ \omega= \frac{\dx r}{r} \wedge \Omega_I + \dx \theta \wedge \Omega_R + \sigma. \]
Such an expression exists for each open set of a covering of $D$, and because the coordinate transformations for a tubular neighbourhood $U\cong ND$ are compatible with those for $\tilde{U}\cong \tilde{D}\times [0,1)$, these patch to a well-defined elliptic symplectic form $\omega \in \Omega^2(M,\log\abs{D})$.  

Lastly, we have 
\[ \res_e(\omega)= 0, \]  
so $\omega$ and the already established co-orientation of $D$ together define the gauge-equivalence class of a stable generalized complex structure on $(M,D)$. 
\end{proof} 

\subsection{Lagrangian branes under blow-up} 
Let $(\tilde{M},\tilde{D},\tilde{\omega})$ and $(M,D,\omega)$ be a logarithmic symplectic manifold and stable generalized complex manifold related by real oriented blow-up as above. 
We begin by showing that Lagrangian branes with and without boundary in $(M,D)$ lift to logarithmic Lagrangians in $(\tilde{M},\tilde{D})$: 
\begin{prop} \textbf{(Lift of Lagrangian branes to the real oriented blow-up)} 
\begin{enumerate}[label=(\roman*)]
\item Let $L\subset M$ be a Lagrangian brane which intersects $D$ transversely. Then $\tilde{L}=[L;L\cap D]\subset \tilde{M}$ is a Lagrangian submanifold with boundary which intersects $\tilde{D}$ transversely. 
\item Let $L\subset M$ be a Lagrangian brane with boundary. The \emph{lift} of $L$ is $\tilde{L}:= \operatorname{cl}\left(\beta^{-1}(L\setminus(L\cap D))\right)=\beta^*(L)$, where $\operatorname{cl}(U)$ denotes the closure of the subset $U$ in the ambient manifold. \\
Then the lift is a Lagrangian submanifold in $\tilde{M}$ which intersects the singular locus $\tilde{D}$ transversely, and such that $\party{}{\theta}$ is nowhere tangent to $\tilde{L}\cap \tilde{D}$. $\tilde{L}$ intersects each $U(1)$-fibre in at most one point. 
\end{enumerate}
\end{prop}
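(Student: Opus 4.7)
The plan is to treat the two parts separately, relying on the functoriality of the real oriented blow-up for (i) and on the normal form of Lemma \ref{lem:smooth} for (ii).

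For part (i), my approach exploits the transverse inclusion. Since $L \pitchfork D$, the intersection $L \cap D$ is a codimension-two submanifold of $L$, and its normal bundle inside $L$ is canonically identified with $ND|_{L\cap D}$. The blow-up $\tilde{L} = [L; L \cap D]$ therefore embeds as a neat submanifold with boundary into $\tilde{M} = [M; D]$, with $\partial \tilde{L}$ sitting inside $\tilde{D}$ over $L\cap D$. Transversality with $\tilde{D}$ follows by comparing boundary-defining functions: the radial normal coordinate $r$ defines both $\partial\tilde{L}$ in $\tilde{L}$ and $\tilde{D}$ in $\tilde{M}$. For the Lagrangian condition I would invoke naturality of the pullback: letting $\beta_L:\tilde{L}\to L$ denote the blow-down for $L$, the identity $\beta\circ \tilde{\iota} = \iota_L \circ \beta_L$ gives $\tilde{\iota}^*\tilde{\omega} = \beta_L^*\iota_L^*\omega = 0$ as a log two-form on $\tilde{L}$.

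For part (ii), I would begin with the normal form of Lemma \ref{lem:smooth}: near $\partial L$ choose polar coordinates $(r,\theta)$ on $ND$ and coordinates $y^1,\dots,y^{2n-2}$ on $D$ so that locally, inside the wedge neighbourhood, $L = \{\theta=0,\ y^j=0 \text{ for } j>k\}$ with $r\geq 0$. In the blow-up $(r,\theta)$ become genuine smooth coordinates, $\tilde{D}=\{r=0\}$, and the closure
\[
\tilde{L} = \{r\geq 0,\ \theta=0,\ y^j=0 \text{ for } j>k\}
\]
is manifestly a smooth neat submanifold of $\tilde{M}$. From this explicit description the geometric claims are immediate: $\tilde{L}$ meets $\tilde{D}$ transversely with $r$ as common defining function, the vector field $\partial/\partial\theta$ is nowhere tangent to $\tilde{L}\cap \tilde{D}$ (since $\theta$ is locally constant on $\tilde{L}$), and each $U(1)$-fibre $\{r=0,\ y^i=\mathrm{const}\}$ meets $\tilde{L}$ in at most the point $\theta=0$.

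For Lagrangianity in (ii), the restriction $\beta: \tilde{L}\setminus \tilde{D}\to L\setminus \partial L$ is a diffeomorphism intertwining $\tilde{\omega}$ and $\omega$, so $\tilde{\iota}^*\tilde{\omega}$ vanishes on the complement of $\tilde{L}\cap \tilde{D}$; smoothness of $\tilde{L}$ and of $\tilde{\omega}$ as a log form then imply that $\tilde{\iota}^*\tilde{\omega}$ is a smooth log two-form on $\tilde{L}$, and vanishing on a dense open subset forces it to vanish identically. The main obstacle I anticipate is rigorously establishing smoothness of $\tilde{L}$ at its intersection with $\tilde{D}$ in case (ii): the transverse functoriality used in (i) fails because $L$ is not transverse to $D$, and Lemma \ref{lem:smooth} is exactly the device that supplies coordinates making the wedge-lift of $L$ a coordinate hyperplane through the boundary of $\tilde{M}$.
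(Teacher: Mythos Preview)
Your proposal is correct and complete. The main difference from the paper lies in part (ii): you work entirely in the explicit coordinate normal form supplied by Lemma~\ref{lem:smooth}, reading off smoothness of $\tilde{L}$, transversality to $\tilde{D}$, non-tangency of $\partial/\partial\theta$, and injectivity on $U(1)$-fibres all at once from the description $\tilde{L}=\{r\geq 0,\ \theta=0,\ y^j=0\}$. The paper instead argues each point intrinsically: transversality by contradiction (if $T\tilde{L}\subset T\tilde{D}$ then $TL\subset TD$, contradicting cleanness together with $\dim L = \dim\partial L +1$), non-tangency of $\partial/\partial\theta$ by observing that the clean intersection lets one choose a tubular neighbourhood in which $L$ sits as a rank-one subbundle of $ND$ over $\partial L$, and injectivity on $U(1)$-fibres by noting that otherwise $L\cap D$ would fail to be the boundary of $L$. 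Your coordinate approach is more explicit and handles the smoothness of $\tilde{L}$ at $\tilde{D}$ head-on (a point the paper leaves implicit); the paper's arguments are shorter and more geometric but rely on the reader filling in why the lift is a smooth submanifold. For part (i) the two proofs are essentially identical.
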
 
\begin{proof} \begin{enumerate}[label=(\roman*)]
\item The blow-up $[L;L\cap D]$ naturally embeds into $(\tilde{M},\tilde{D})$ as $\tilde{L}=\operatorname{cl}\left(\beta^{-1}(L\setminus( L\cap D))\right)$: Since $L\pitchfork D$, we can always choose a small tubular neighbourhood of $D$ in which $L$ is a fibre of $ND$. 
The submanifold $\tilde{L}$ is clearly Lagrangian with respect to $\tilde{\omega}=\beta^*(\omega)$, since $\beta(\tilde{L})=L$. Obviously $\tilde{L}\pitchfork \tilde{D}$. 
\item Again, $\tilde{L}$ is clearly Lagrangian with respect to $\tilde{\omega}=\beta^*(\omega)$, since $\beta(\tilde{L})=L$. It intersects $\tilde{D}$ transversely: $\tilde{D}$ is codimension-1 in $\tilde{M}$. If $\tilde{L}$ did not intersect $\tilde{D}$ transversely, we would have $T\tilde{L}\vert_{\tilde{L}\cap\tilde{D}} \subset T\tilde{D}\vert_{\tilde{L}\cap\tilde{D}}$, which would also imply $TL\vert_{L\cap D} \subset TD\vert_{L\cap D}$, which is a contradiction -- we assumed $L\cap D$ to be a clean intersection. Because of the clean intersection, we can choose a tubular neighbourhood of $D$ such that $L$ is a rank-1 subbundle of $ND$ over $\partial L$. This means in particular that $\party{}{\theta}$ is not tangent to $L$ in some neighbourhood of $D$, so it will not be tangent to $\tilde{L}\cap\tilde{D}$ either. If $\tilde{L}$ intersected any $U(1)$-fibre in more than one point, $L\cap D$ would not be the boundary of $L$. 
\end{enumerate}\end{proof} 
\begin{figure}[h] \centering
\includegraphics[scale=0.4]{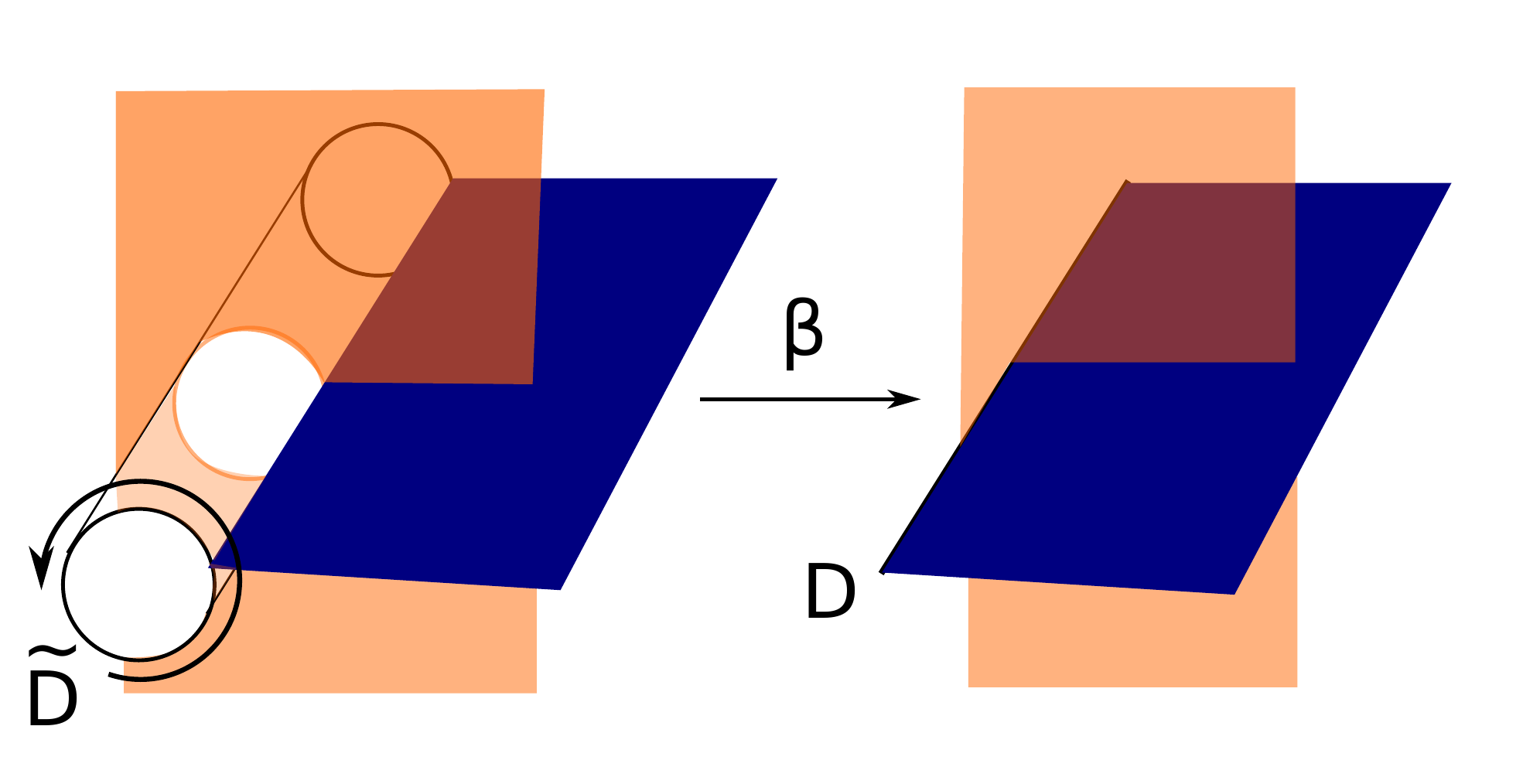}
\caption{Log Lagrangians under blow-down: Depending on the intersection with $\tilde{D}$, the result can be a brane with or without boundary.}
\end{figure}

Conversely,let $\tilde{L}\subset \tilde{M}$ be a compact Lagrangian submanifold with boundary, s.t. $\partial\tilde{L}\subset \tilde{D}$ and $\tilde{L}\pitchfork\tilde{D}$. According to Theorem \ref{thm:lognbhd} every connected component of $\partial\tilde{L}$ will lie inside a symplectic leaf of $\tilde{D}$ and be Lagrangian inside this leaf. We know that the symplectic foliation of $\tilde{D}$ is precisely given by the distribution $\ker(\res\tilde{\omega})=\ker(\tilde{\Omega}_I)$. 

There are two cases of interest, in which $\beta(\tilde{L})$ is a smooth submanifold in $M$: 

\begin{thm} \label{thm:branes}\textbf{(Blow-down of logarithmic Lagrangians)} 
Let $(M,D,\omega)$ be a stable generalized complex manifold and $(\tilde{M},\tilde{D},\tilde{\omega})$ its blow-up. Let $\tilde{L}\subset \tilde{M}$ be a Lagrangian submanifold with boundary that intersects $\tilde{D}$ transversely, and $\tilde{D}\cap\tilde{L}=\partial{L}$. 
\begin{enumerate}[label=(\roman*)]
\item If $\party{}{\theta}\in \Gamma(T(\tilde{L}\cap\tilde{D}))$, the $U(1)$-action restricts to $\partial {\tilde{L}}$. 
Any such $\tilde{L}$ is log Hamiltonian isotopic to a $\tilde{L}'$ whose image under the blow-down map $L'=\beta(\tilde{L}')$ is a smooth Lagrangian brane without boundary. 
\item If $\party{}{\theta}$ is nowhere tangent to $T(\tilde{L}\cap\tilde{D})$ and $\beta\vert_{\tilde{L}}$ is injective, $\beta(\tilde{L})=:L$ is a Lagrangian brane with boundary in $D$. 
\footnote{In Example \ref{ex:Hopf}, we consider a similar example where $\beta\vert_{\tilde{L}}$ is \emph{not} injective, but $\beta(\tilde{L})$ is nonetheless smooth and intersects $D$ in a smooth codimension-one submanifold, just not its boundary. But since branes with boundary are the focus of this text, this case is excluded for now.} 
\end{enumerate} 
\end{thm}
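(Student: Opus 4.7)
The plan is to treat the two parts separately: part (ii) is a local coordinate verification essentially reversing Lemma \ref{lem:smooth}, while part (i) requires producing a log-Hamiltonian isotopy that renders $\tilde{L}$ $U(1)$-invariant near its boundary.

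For (ii), away from $\tilde{D}$ the blow-down $\beta$ is a diffeomorphism and $\tilde{\omega}=\beta^*\omega$ (Theorem \ref{thm:blowup11}), so $\beta(\tilde{L}\setminus \partial\tilde{L})$ is automatically a smooth Lagrangian submanifold of $M \setminus D$. The real work is near boundary points. At a point $\tilde{p}\in \partial\tilde{L}$ I would use the non-tangency of $\party{}{\theta}$ to $\tilde{L}\cap\tilde{D}$ together with the injectivity of $\beta\vert_{\tilde{L}}$ to choose polar coordinates as in the proof of Lemma \ref{lem:smooth} in which $\tilde{L}$ is locally cut out by equations $\theta = \lambda(r, y^i)$ and $y^j = g^j(r, y^i)$ with smooth extensions to $r=0$. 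The blow-down $(r,\theta)\mapsto re^{i\theta}$ then presents $L = \beta(\tilde{L})$ as a smooth submanifold-with-boundary of exactly the type treated by Lemma \ref{lem:smooth}, with $\partial L = L \cap D = \beta(\partial\tilde{L})$ and clean intersection. Finally, the Lagrangian condition $\iota_L^*\omega = 0$, interpreted in $\Omega^2(L,\log\abs{\partial L})$ via Proposition \ref{prop:incl}, holds because $\beta^*\iota_L^*\omega = \iota_{\tilde{L}}^*\tilde{\omega} = 0$ on the dense open $L \setminus \partial L$ and extends by continuity.

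For (i), the first claim that the $U(1)$-action restricts to $\partial\tilde{L}$ is immediate: $\party{}{\theta}$ is the action vector field of the principal $U(1)$-bundle $\beta\vert_{\tilde D}\colon \tilde D\to D$, so its tangency to $\partial\tilde{L}$ means orbits stay inside $\partial\tilde{L}$. Set $Y_0 := \beta(\partial\tilde{L}) \subset D$. The plan for the main claim is to produce a log-Hamiltonian isotopy carrying $\tilde{L}$ to $\tilde{L}'$ which is $U(1)$-invariant in a neighborhood of $\partial\tilde{L}$ in $\tilde{M}$. Granting this, Proposition \ref{prop:blow}(iii) identifies $\tilde{L}'$ near its boundary with $\beta^{-1}(L'_{loc})$ for some smooth $L'_{loc} \subset M$ through $Y_0$, and gluing with $\beta(\tilde{L}'\setminus \mathrm{nbhd})$ yields a smooth Lagrangian brane $L' = \beta(\tilde{L}')$ without boundary; Lagrangianity against $\omega$ is automatic from $\tilde{\omega}=\beta^*\omega$. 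To produce $\tilde{L}'$ I would extend the $U(1)$-action to a small invariant tubular neighborhood preserving $\tilde{\omega}$ via the averaging/Moser argument of Theorem \ref{thm:blowdown}, pick a local smooth Lagrangian brane $L_0 \subset M$ through $Y_0$ transverse to $D$ (available by the elliptic normal form from \cite{Cavalcanti2015a} applied to the leaf of $\omega^{-1}$ in $D$ containing $Y_0$), and use its lift $\tilde{L}_0 := \beta^{-1}(L_0)$ as a $U(1)$-invariant model. The log Lagrangian neighborhood theorem (Theorem \ref{thm:lognbhd}) identifies a neighborhood of $\tilde{L}$ in $\tilde M$ with a neighborhood of the zero section of $T^*\tilde{L}(\log\partial\tilde{L})$; in this model $\tilde{L}_0$, after a $C^1$-small adjustment matching boundaries, is the graph of a closed log one-form $\alpha_0$ defined on a collar of $\partial\tilde{L}$ in $\tilde{L}$. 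A cutoff construction yields a closed log one-form $\tilde{\alpha}$ on all of $\tilde{L}$ agreeing with $\alpha_0$ near the boundary, and by Proposition \ref{prop:logdef} its graph $\tilde{L}'$ is log-Hamiltonian isotopic to $\tilde{L}$ provided $[\tilde{\alpha}] = 0$ in $H^1(\tilde{L},\log\partial\tilde{L})$.

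The hard part will be carrying out this cutoff while simultaneously preserving closedness of $\tilde{\alpha}$ and ensuring its cohomology class vanishes. Closedness after cutoff demands $\alpha_0$ to be exact on the annular transition region, which can be arranged by a preliminary Hamiltonian isotopy supported near $\partial\tilde{L}$ to pre-align $\tilde{L}$ with the $U(1)$-invariant model before cutting off. Vanishing of $[\tilde{\alpha}]$ is a cohomological constraint that I would handle through the decomposition of $H^1(\tilde{L},\log\partial\tilde{L})$ into smooth and residue components, exploiting the freedom in the choice of $L_0$ (different local branes through $Y_0$ shift the residue part) and an exact smooth correction (to adjust the smooth part). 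Both manoeuvres are standard in log symplectic topology but require care to verify explicitly in this setting.
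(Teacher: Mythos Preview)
Your treatment of part~(ii) is correct and matches the paper's, though the paper is terser: it simply observes that non-tangency of $\partial_\theta$ makes $\partial\tilde{L}$ transverse to each $U(1)$-fibre, injectivity of $\beta|_{\partial\tilde{L}}$ forces each fibre to meet $\partial\tilde{L}$ in at most one point, and hence $\beta|_{\tilde{L}}$ is a diffeomorphism onto its image. Your coordinate verification via Lemma~\ref{lem:smooth} fleshes out the clean-intersection and Lagrangian conditions the paper leaves implicit.

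For part~(i) your strategy diverges from the paper's in a way that creates the very ``hard part'' you flag. The paper reverses the roles of model and target: it first builds the explicit $U(1)$-invariant Lagrangian $\tilde{L}':=\partial\tilde{L}\times[0,1)$ inside a collar of $\tilde{D}$ using the normal form from Theorem~\ref{thm:blowdown}, then applies the \emph{elliptic} Lagrangian neighbourhood theorem (Theorem~\ref{thm:lnbhd1}) to its blow-down $L'=\beta(\tilde{L}')$, obtaining a log Lagrangian neighbourhood of $\tilde{L}'$ in which $\tilde{L}$ (near $\tilde{D}$) appears as the graph of a closed log one-form $\tilde\alpha\in\Omega^1(\tilde{L}',\log\partial\tilde{L}')$. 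The decisive step is then purely cohomological: since $\beta^*\colon H^1(L',\log|D_{L'}|)\to H^1(\tilde{L}',\log\partial\tilde{L}')$ is an isomorphism, $\tilde\alpha$ is automatically cohomologous to the pullback $\beta^*\alpha'$ of an elliptic one-form on $L'$, and the graph of such a pullback manifestly blows down to a smooth brane (the graph of $\alpha'$ in $T^*L'(\log|D_{L'}|)$). No exactness is required and no cutoff construction arises; the Hamiltonian is then extended by a bump function away from $\tilde{D}$.

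By contrast, you take the Lagrangian neighbourhood of the \emph{non}-invariant $\tilde{L}$ and try to realise the invariant model as a graph over it, which forces you to manufacture a globally closed, \emph{exact} extension $\tilde\alpha$ via cutoff. Your proposed remedy---a preliminary Hamiltonian isotopy to pre-align before cutting off---is close to circular, and the separate adjustment of residue and smooth cohomology classes by varying $L_0$ is left as a sketch. The route is not unsalvageable (for instance, since $\partial\tilde{L}_0=\partial\tilde{L}$ one can argue that $\alpha_0$ has vanishing residue and vanishing boundary restriction, hence is exact on the collar), but you have not carried this through, and the paper's use of the blow-up cohomology isomorphism is both shorter and avoids the issue entirely.
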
 
\begin{proof} \begin{enumerate}[label=(\roman*)] 
\item In this case, the free $U(1)$ action restricts to $\partial \tilde{L}$, which itself becomes a $U(1)$-principal bundle over $\beta(\partial \tilde{L})=D_L$. \\
Pick a collar neighbourhood of $\tilde{D}$, $\tilde{U}=\tilde{D}\times [0,1)$, in such a way that 
\[ \omega = \frac{\dx r}{r} \wedge \tilde{\Omega}_I + \dx \theta \wedge \tilde{\Omega}_R + \tilde{\sigma}, \] 
and $L_{\party{}{\theta}}\omega = 0$. 
We know that 
\[\iota^*_{\partial\tilde{L}} \tilde{\Omega}_I=0, \iota^*_{\partial\tilde{L}} (\dx \theta \wedge \tilde{\Omega}_R + \tilde{\sigma}) = 0, \]
so $\tilde{L}':= \partial \tilde{L} \times [0,1)$ is a log Lagrangian. Obviously the $U(1)$-action also restricts to its boundary.
Clearly, $\beta(\tilde{L}')=: L'$ is a smooth elliptic Lagrangian without boundary in $M$, equipped with a pullback elliptic divisor $D_{L'}$, and $L'\pitchfork D$. Pick an elliptic Lagrangian neighbourhood for $L'$ according to Theorem \ref{thm:lnbhd1}. This corresponds to a log Lagrangian neighbourhood of $\tilde{L}'$ in $\tilde{M}$ (via pullback of coordinates). 
At least in some neighbourhood of $\tilde{D}$, $\tilde{L}$ is contained in the thus obtained log neighbourhood of $\tilde{L}'$ and can be written as the graph of a closed log one-form $\tilde{\alpha}\in\Omega^1(\tilde{L}',\log \partial \tilde{L}')$. 
Since the pullback $\beta*: \Omega^1(L',\log \abs{D_{L'}}) \rightarrow \Omega^1(\tilde{L'},\log \partial \tilde{L}')$ is an isomorphism on cohomology, $\tilde{\alpha}$ has to be in the same cohomology class as a form $\tilde{\alpha}'$ which is the pullback of a smooth elliptic form $\alpha'$ on $L'$. Thus the graph of $\tilde{\alpha}$, $\tilde{L}$, is locally log Hamiltonian isotopic to a Lagrangian which blows down to a smooth Lagrangian brane without boundary in $M$. 

This argument takes place inside a tubular neighbourhood of $\tilde{D}$, it is however possible to cut off any Hamiltonian function with a bump function, so the Hamiltonian isotopy above can be extended by the identity outside a neighbourhood of $\tilde{D}$. 
\item Since $\party{}{\theta}$ is nowhere tangent to $\tilde{L}$, $\partial \tilde{L}$ intersects each $U(1)$-fibre transversely. Since $\beta\vert_{\partial \tilde{L}}$ is injective, the intersection with each fibre is either empty or in exactly one point. Thus $\beta\vert_{\tilde{L}}$ is a diffeomorphism onto its image, and $L:=\beta(\tilde{L})$ is a smooth submanifold of $M$, with boundary in $D$. 
\end{enumerate} \end{proof} 

\begin{ex} 
Consider $M=T^*L(-\log \abs{Y})$, where $L$ is a compact $n$-dimensional manifold and $Y\subset L$ a codimension-2 submanifold given as the zero-locus of a complex divisor. Equip $M$ with the canonical elliptic symplectic form $\omega_0$. 

Then the real oriented blow-up $\tilde{M}$ of $T^*L(-\log\abs{Y})\vert_Y$ inside $M$ with the pullback-form $\beta^*\omega_0$ is isomorphic to $T^*\tilde{L}(-\log\partial\tilde{L})$, where $\tilde{L}:=[L;Y]=\operatorname{cl}(L\setminus Y)$ is the lift under the real oriented blow-up, equipped with the canonical logarithmic symplectic form $\tilde{\omega}_0$.
  
Conversely, if $\tilde{L}$ is an $n$-dimensional manifold with boundary $\partial\tilde{L}$, s.t. $\partial\tilde{L}$ is a $U(1)$-principal bundle, we can consider the blow-down of $(T^*\tilde{L}(-\log \partial\tilde{L}),\tilde{\omega}_0)$, and the result will be isomorphic to $(T^*L(-\log\abs{Y}),\omega_0)$, where $L$ is the blow-down $\tilde{L}\rightarrow L$ and $Y=\partial\tilde{L}/U(1)$. 
\end{ex} 

\subsection{Neighbourhoods of Lagrangian branes with boundary} \label{sec:nbhdbdy}
We have now established that every Lagrangian brane with boundary $(L,\partial L) \subset (M,D)$ is the blow-down of a log Lagrangian submanifold with boundary $\tilde{L}$ which intersects $\tilde{D}$ transversely. \\ 
For such Lagrangians $\tilde{L}$, we have proved a Lagrangian neighbourhood theorem, Theorem \ref{thm:lognbhd}. Choose a neighbourhood $\tilde{U}$ of $\tilde{L}$ according to this theorem. Its image under the blow-down $U:=\beta(\tilde{U})$ is a wedge neighbourhood in the sense of Definition \ref{defn:wedge}. 
\begin{figure}[h] \centering 
\includegraphics[scale=0.45]{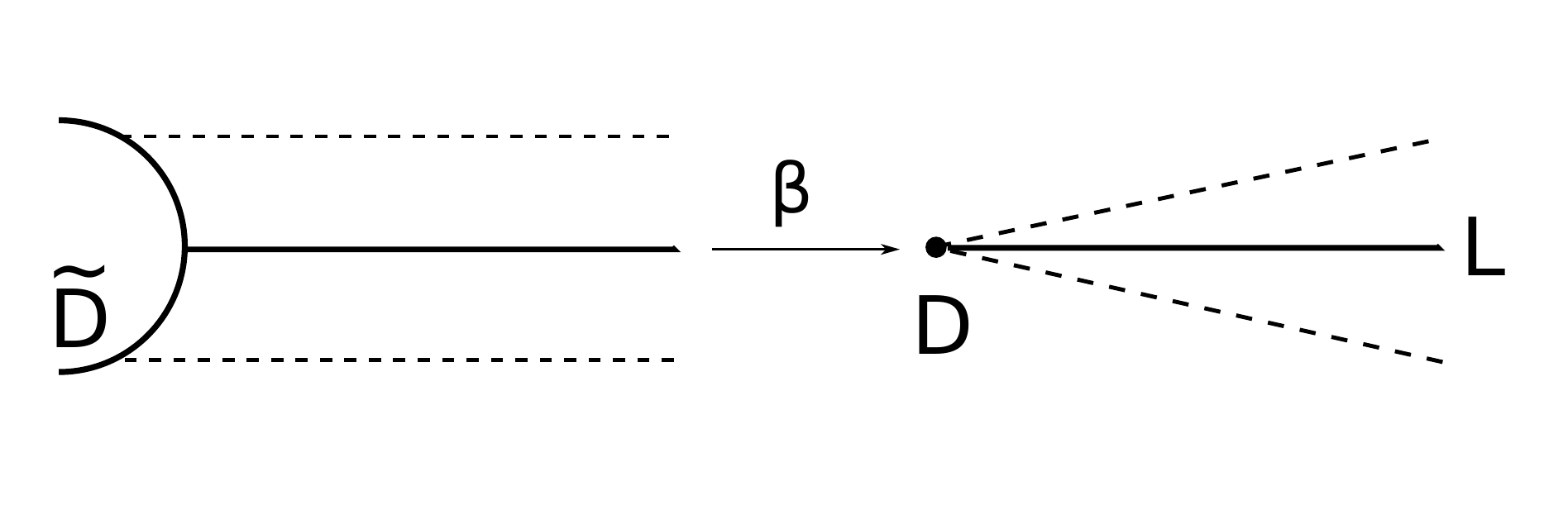}
\caption{Lagrangian wedge neighbourhood as a blow-down}
\end{figure}

\begin{prop} \label{prop:ellnbhd} \textbf{(Normal form for wedge neighbourhoods)}
Let $(L,\partial L)$ be a Lagrangian brane with boundary in a stable generalized complex manifold $(M,D,\omega)$, and $(\tilde{L},\partial \tilde{L})$ the corresponding log Lagrangian in the real oriented blow-up $(\tilde{M},\tilde{D},\tilde{\omega})$. Let $(\tilde{U},\partial \tilde{U})\subset T^*\tilde{L}(\log \partial\tilde{L})$ be a Lagrangian neighbourhood of $(\tilde{L},\partial \tilde{L})$ in the sense of Theorem \ref{thm:lognbhd}. Identify $\tilde{U}$ with the tubular neighbourhood of the zero section in $T^*\tilde{L}(\log \partial \tilde{L})$ and write $\tilde{U}$ for either. Recall that there is a diffeomorphism $\tilde{\psi}: \tilde{U}\rightarrow \tilde{U}$ such that \[\tilde{\psi}^*\left(\tilde{\omega}\vert_{\tilde{U}}\right)= \tilde{\omega}_0, \] 
where $\tilde{\omega}_0$ is the standard log symplectic form on the log cotangent bundle $T^*\tilde{L}(\log \partial \tilde{L})$. 

Now, there exists an elliptic symplectic form $\omega_0$ on the wedge neighbourhood
\[ (U,D_U):=(\beta(\tilde{U}),\beta(\partial \tilde{U})) \] 
which locally around each point in $D_U$ can be expressed in local coordinates $(r,x,q_i)$ on $L$ as 
\[ \omega_0= \frac{\dx r}{r} \wedge \dx y + \dx \theta \wedge \dx x + \sum_i \dx q^i \wedge \dx p_i. \] 
This form pulls back to $\tilde{\omega}_0$ on the blow-up of the wedge neighbourhood. 
Furthermore, the diffeomorphism $\tilde{\psi}$ of $\tilde{U}$ descends to a diffeomorphism of the wedge neighbourhood $\psi: U\rightarrow U$ such that $\psi^*\omega=\omega_0$. 
\end{prop}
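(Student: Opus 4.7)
The strategy is threefold: (i) define $\omega_0$ on $U$ via blow-down of $\tilde\omega_0$ and verify it extends smoothly across $D$ using local coordinates; (ii) run the log Moser argument on $\tilde U$ with a primitive that is itself a $\beta$-pullback; (iii) descend the resulting log flow to an elliptic symplectomorphism $\psi$ on the wedge neighbourhood.

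For (i), I would set $\omega_0 := (\beta^{-1})^*\tilde\omega_0$ on the smooth locus $U\setminus D$, where $\beta: \tilde M\setminus\tilde D\to M\setminus D$ is a diffeomorphism. To see that $\omega_0$ extends smoothly to an elliptic symplectic form on $U$ satisfying $\beta^*\omega_0 = \tilde\omega_0$, I would work in charts carrying the Standard Local Example normal form, with $L$ cut out by $\{y=\theta=p_j=\text{const}\}$. Under the identification of $\tilde U$ with a neighbourhood of the zero section of $T^*\tilde L(\log\partial\tilde L)$, the cotangent fibre coordinates $(y, x', p_j)$ dual to $(r\partial_r, \partial_x, \partial_{q_j})$ on $\tilde L$ match the ambient coordinates $(y, \theta, p_j)$ up to sign, as forced by comparing the canonical Liouville form with the restriction of the ambient log symplectic form term by term. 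Then
\[ \tilde\omega_0 = \frac{\dx r}{r}\wedge \dx y + \dx x\wedge \dx x' + \sum_j \dx q_j\wedge \dx p_j \]
reads as the standard elliptic form
\[ \omega_0 = \frac{\dx r}{r}\wedge \dx y + \dx\theta\wedge \dx x + \sum_j \dx q^j\wedge \dx p_j \]
in ambient coordinates, which is manifestly smooth and elliptic on the chart. Global consistency follows from the injectivity of $\beta^*$ on log forms established in the Lemma underlying Theorem \ref{thm:blowup11}.

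For (ii) and (iii), I would run the Moser step of the proof of Theorem \ref{thm:lognbhd} with the primitive $\tilde\alpha$ of $\tilde\omega-\tilde\omega_0$ (satisfying $\iota_{\tilde L}^*\tilde\alpha=0$) chosen to be of the form $\tilde\alpha = \beta^*\alpha$ for some elliptic one-form $\alpha$ on $U$ with $\iota_L^*\alpha=0$. The log Moser vector field $\tilde X_t = -\tilde\omega_t^{-1}(\beta^*\alpha)$ is then $\beta$-related to the elliptic vector field $X_t = -\omega_t^{-1}(\alpha)$, since the vector bundle morphism $\beta_*$ intertwines $\tilde\omega_t^{-1}$ and $\omega_t^{-1}$ via pullback on the smooth locus and extends smoothly across the boundary by the lemma in Theorem \ref{thm:blowup11}. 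By Proposition \ref{prop:ellflow}, $X_t$ integrates to an elliptic symplectomorphism $\psi: U \to U$, with $\psi^*\omega = \omega_0$ by the standard Moser computation and $\beta\circ\tilde\psi = \psi\circ\beta$ by $\beta$-relatedness of the flows, identifying $\psi$ as the descent of $\tilde\psi$.

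The main technical obstacle is step (ii): producing the primitive $\tilde\alpha$ in pullback form while maintaining $\iota_{\tilde L}^*\tilde\alpha = 0$. Starting from the log primitive constructed in the proof of Theorem \ref{thm:lognbhd}, I would first make both $\tilde\omega$ and $\tilde\omega_0$ $U(1)$-invariant on a collar of $\tilde D$ (for $\tilde\omega=\beta^*\omega$ this is automatic, and for $\tilde\omega_0=\beta^*\omega_0$ it follows from step (i)), then average the log primitive over the local $U(1)$-action to obtain a $U(1)$-invariant primitive $\bar\alpha$, and finally restore $\iota_{\tilde L}^*\bar\alpha = 0$ by the correction $\bar\alpha \mapsto \bar\alpha - p^*\iota_{\tilde L}^*\bar\alpha$, with $p: \tilde U \to \tilde L$ the Lagrangian neighbourhood retraction. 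The delicate point is preserving $U(1)$-invariance through this last correction, because $\tilde L$ is transverse to the $U(1)$-orbits by the brane-with-boundary definition and a naive retraction breaks equivariance; I would arrange $p$ and the cutoff functions appearing in the correction equivariantly with respect to the averaged action near the boundary, so that the corrected primitive is simultaneously $U(1)$-invariant and vanishes on $\tilde L$, and hence descends to an elliptic primitive $\alpha$ on $U$.
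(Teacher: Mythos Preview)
Your overall architecture is correct and matches the paper's, but step~(ii) has a real gap, and the paper's route around it is both simpler and different in substance.

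For step~(i), the paper does not assume ``Standard Local Example'' charts for $L$; that would be circular, since the proposition is precisely what produces such a normal form. Instead the paper uses Lemma~\ref{lem:smooth} to arrange $r\partial_r$ tangent to $L$, then makes the key geometric choice: the tubular neighbourhood of $\tilde L$ is taken so that near $\tilde D$ the vector field $\partial_\theta$ is tangent to the \emph{fibres} of $N\tilde L$ (possible because $\tilde L$ meets each $U(1)$-fibre in at most one point). Under the identification $N\tilde L\cong T^*\tilde L(\log\partial\tilde L)$ via $\tilde\omega$, this forces the fibre coordinate dual to $\partial_\theta$ to coincide with $\theta$ itself. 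One then checks directly that $i_{\partial_\chi}(\res\tilde\omega_0)=0$ and $\dx(i_{\partial_\chi}\tilde\omega_0|_{\tilde D})=0$, i.e.\ the hypotheses of Theorem~\ref{thm:blowdown}, so $\tilde\omega_0$ descends to an elliptic form $\omega_0$ with respect to the \emph{same} elliptic divisor as $\omega$. Your coordinate-matching sketch is morally this computation, but the paper's route makes explicit which choice of tubular neighbourhood is doing the work.

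For step~(ii), your averaging argument does not achieve descent. The condition ``$\bar\alpha$ is $U(1)$-invariant on a collar of $\tilde D$'' is neither necessary nor sufficient for $\bar\alpha$ to lie in the image of $\beta^*$: the log one-form $r\,\dx\theta$ on the blow-up of $\Rnum^2$ is $U(1)$-invariant but does not descend to an elliptic one-form, since $r=\sqrt{a^2+b^2}$ is not smooth at the origin. (Conversely, $\beta^*$-pullbacks need not be $U(1)$-invariant, since smooth coefficients on $M$ generally depend on $\theta$.) So even if you could make your corrected primitive $U(1)$-invariant --- which, as you note, already fights against the transversality of $\tilde L$ to the orbits --- you would still not know it descends.

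The paper avoids this entirely by working downstairs from the start: once step~(i) gives $\omega_0$ as an honest elliptic form on the wedge neighbourhood $U$, and $L$ is Lagrangian for both $\omega$ and $\omega_0$, one obtains an elliptic primitive $\alpha$ on $U$ with $\dx\alpha=\omega-\omega_0$ directly. Then $\tilde\alpha:=\beta^*\alpha$ is the log primitive used in the Moser field $\tilde X_t=-\tilde\omega_t^{-1}(\tilde\alpha)$, which is automatically $\beta$-related to the elliptic field $X_t=-\omega_t^{-1}(\alpha)$, and the flow descends. No averaging or equivariance is needed, and the vanishing condition $\iota_{\tilde L}^*\tilde\alpha=0$ is not required for the statement of the proposition.
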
  
\begin{proof} 
First we need to show that $\tilde{\omega}_0$ can indeed be written as the pullback of an elliptic form on $U$ that extends smoothly around the wedge of $U$. 
According to Lemma \ref{lem:smooth}, we can choose $(r,\theta)$ in such a way that $r\party{}{r}$ is tangent to $L$ in a small neighbourhood of $D$, a property that persists after blow-up. 
We have already established that the brane in the blow-up, $\tilde{L}$, intersects each $U(1)$-fibre in at most one point, so we can choose the tubular neighbourhood of $\tilde{L}$ in such a way that near $\tilde{D}$, $\party{}{\theta}$ is tangent to the fibres. We can view $\party{}{\theta}\vert_{\tilde{L}}$ as spanning a sub-line bundle of $N\tilde{L}\vert_{\partial \tilde{L}}$.  

We identify $N\tilde{L}$ with $T^*\tilde{L}(\log \tilde{L}\cap\tilde{D})$ using $\tilde{\omega}$. 
On an open set near $\tilde{D}$, we can write 
\[ \tilde{\omega}= \frac{\dx r}{r} \wedge \gamma_r + \dx \theta \wedge \gamma_{\theta} + \tilde{\epsilon} .\] 
The subbundle of $N\tilde{L}$ spanned by $\party{}{\theta}$ gets mapped to the subbundle of $T^*L(\log \tilde{L}\cap\tilde{D})$ spanned by $\iota^*\gamma_{\theta}, \iota: \tilde{L}\hookrightarrow \tilde{M}$.  \\ 

Let $\xi_r$ be the fibre coordinate associated to $\frac{\dx r}{r}$, and $\chi$ the fibre coordinate associated to $\iota^*\gamma_{\theta}$. 
Then $\tilde{\omega}_0$ has the form 
\[ \tilde{\omega}_0= -\frac{\dx r}{r} \wedge \dx \xi_r - \iota^* \gamma_{\theta}\wedge \dx \chi + \tilde{\rho} \] 
with $\tilde{\rho}$ a two-form on $T^*\tilde{L}(\log \tilde{L}\cap\tilde{D})$ s.t. $i_{r\party{}{r}}\tilde{\rho}=0, i_{\party{}{\chi}}\tilde{\rho}=0$. 

This implies 
\[ i_{\party{}{\chi}}(\res \tilde{\omega}_0)=0, \dx\left( i_{\party{}{\chi}} \tilde{\omega}_0 \middle\vert_{\tilde{D}} \right) = \dx \left(\iota^*\gamma_{\theta}\vert_{\tilde{D}\cap\tilde{L}}\right)=0. \] 
 We can choose the tubular neighbourhood embedding for $\tilde{L}$ such that $\theta=\omega^*(\chi)$ (viewing $\omega$ as a map $N\tilde{L}\rightarrow T^*\tilde{L}(\log\tilde{L}\cap \tilde{D})$). 
According to Theorem \ref{thm:blowdown} this means that $\tilde{\omega}_0$ is indeed the pullback of a locally defined elliptic form $\omega_0$ on $(U,U\cap D)$, with respect to the same elliptic divisor as $\omega$: $(\tilde{\omega}^{-1})^n$ and $(\tilde{\omega}_0^{-1})^n$ are both of the form 
\[ f r\party{}{r}\wedge \party{}{\theta} \wedge \party{}{y_3} \wedge \dots \wedge \party{}{y_{2n}}, f\neq 0 \]
with respect to the same $(r,\theta)$. 

The diffeormorphism $\tilde{\psi}:\tilde{U}\rightarrow \tilde{U}$ relating $\tilde{\omega}$ and $\tilde{\omega}_0$ is the time-1 flow of the time-dependent log vector field 
\[ \tilde{X}_t = - \tilde{\omega}_t^{-1}(\tilde{\alpha}),\ \tilde{\omega}_t= t\tilde{\omega}+(1-t)\tilde{\omega}_0, \tilde{\omega}-\tilde{\omega}_0 = \dx \tilde{\alpha} \] 
We have: 
\[ \tilde{\omega}=\beta^*(\omega), \tilde{\omega}_0=\beta^*(\omega_0),\] 
$L$ is Lagrangian with respect to both $\omega$ and $\omega_0$, so $\omega-\omega_0=\dx \alpha$, where $\beta^*(\alpha)=\tilde{\alpha}$. 
Clearly, 
\[ \beta_*(\tilde{X}_t)=-\omega_t^{-1}(\alpha) =: X_t, \] 
which is an elliptic vector field whose time-1 flow takes $\psi^*(\omega)=\omega_0$. 
\end{proof} 
We thus obtain a standard local neighbourhood of branes with boundary in stable generalized complex manifolds, which is a wedge neighbourhood in the sense of Definition \ref{defn:wedge}. 

With the results from this section, 
it is easy to prove Proposition \ref{prop:resb}: 
\begin{proof}\textit{of Proposition \ref{prop:resb}} 
According to  Proposition \ref{prop:ellnbhd}, we can pick coordinates on a wedge neighbourhood $(U,U\cap D)$ of a brane with boundary so that 
\begin{align*} 
\omega &= \frac{\dx r}{r} \wedge \dx y + \dx \theta \wedge \dx x+ \sigma 
\end{align*} 
Then, up to addition of a smooth closed two-form, the real part of a corresponding log symplectic form $\sigma=B+i\omega$ is 
\[ B= \frac{\dx r}{r} \wedge \dx x + \dx \theta \wedge \dx y, \] 
with $\iota: L \rightarrow M$, $\res(\iota^*B) = \iota^*\dx x\vert_{\partial L}.$
In the proof above, we have already established that after real oriented blow-up 
\[ \tilde{\omega}: N\tilde{L}\stackrel{\cong}{\rightarrow} T^*\tilde{L}(\log \tilde{L}\cap \tilde{D}), \left.\party{}{\theta}\right\vert_L \mapsto \dx x \] 
The fact that this is an isomorphism ensures that $\iota^*\dx x\neq 0$ everywhere. 
\end{proof}

\section{Small deformations of Lagrangian branes with boundary in stable GC manifolds} \label{sec:defo}
Using the neighbourhood normal form result established in the previous section, we now consider small deformations of Lagrangian branes with boundary and established the local form of the smooth deformation space. 

\begin{defn} 
A \emph{small deformation} of the Lagrangian brane with boundary $L$ is a Lagrangian brane with boundary $L_1$ such that there exists a smooth family of embeddings
\[ \phi: (L,\partial L) \times [0,1] \rightarrow (M,D) \]
which are strong maps of pairs, such that $\phi(L,0)=L, \phi(L,1)=L_1$, and such that all $\phi(L,t)$ are Lagrangian branes with boundary in the sense of Definition \ref{def:bb} which are $C^1$-close to $L$. 
\end{defn} 
When we lift such a family $\phi(L,t)$ of Lagrangian branes with boundary to the real oriented blow-up of $(M,D)$, $(\tilde{M},\tilde{D})$, we obtain a family of log Lagrangians which define a $C^1$-small deformation of $\tilde{L}=\beta^*(L)$ in the sense of Definition \ref{def:logdef}. Conversely, the image of any sufficiently $C^1$-small deformation of $\tilde{L}$ under the blow-down map $\beta$ is a small deformation of Lagrangian branes with boundary as above. 

If we pick a log Lagrangian neighbourhood for $\tilde{L}$ and a $\beta$-related wedge neighbourhood of $L$, the lifts of sufficiently small deformations $\phi(L,t)$ to the blow-up will intersect the fibres of $T^*L(\log \partial L)$ transversely and can thus be written as graphs of closed log one-forms on $L$. Conversely, under the identification of the tubular neighbourhood of $\tilde{L}$ with a neighbourhood of the zero section of $T^*L(\log \partial L)$ followed by the blow-down, every sufficiently small closed log one-form defines a small deformation of the Lagrangian brane with boundary $L$. 

\begin{thm} \label{thm:defbdy}\textbf{(Small deformations of Lagrangian branes with boundary)} 
Up to local Hamiltonian isotopy (i.e. Hamiltonian isotopy that stays within the wedge neighbourhood) the small deformations of a brane with boundary $L\subset (M,D)$ are given by $H^1(L,\log L\cap D)$, i.e. the first cohomology of logarithmic forms on $L$ with respect to $\partial L=L\cap D$. 
\end{thm}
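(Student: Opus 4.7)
The plan is to reduce the statement to Proposition \ref{prop:logdef} for log Lagrangians via the real oriented blow-up. Recall from Theorem \ref{thm:branes}(ii) that the lift $\tilde L = \beta^*(L)$ of a Lagrangian brane with boundary is a log Lagrangian in $(\tilde M, \tilde D, \tilde \omega)$ which intersects $\tilde D$ transversely, and that $\beta|_{\tilde L} \colon \tilde L \to L$ is a diffeomorphism of manifolds with boundary sending $\partial \tilde L$ onto $\partial L = L \cap D$. By Proposition \ref{prop:ellnbhd}, a wedge neighbourhood $U$ of $L$ lifts to a log Lagrangian neighbourhood $\tilde U$ of $\tilde L$ standardly identified with a neighbourhood of the zero section in $T^*\tilde L(\log \partial \tilde L)$, and the associated symplectic forms are $\beta$-related.

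Next I would set up a bijection between small deformations. Given a smooth family $\phi_t \colon (L, \partial L) \to (M, D)$ of small deformations of $L$ staying inside $U$, the lifts $\tilde \phi_t = \beta^* \phi_t$ form a smooth family of compact log Lagrangians in $\tilde U$ which remain transverse to $\tilde D$ and are $C^1$-close to $\tilde L$, i.e. a small log deformation of $\tilde L$ in the sense of Definition \ref{def:logdef}. Conversely, a sufficiently $C^1$-small log deformation of $\tilde L$ blows down to a small deformation of $L$, since $\beta|_{\tilde L}$ is a diffeomorphism and transversality with $\tilde D$ corresponds to boundary-clean intersection with $D$ downstairs. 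Proposition \ref{prop:logdef} then classifies small log deformations of $\tilde L$ modulo local log Hamiltonian isotopy by $H^1(\tilde L, \log \partial \tilde L)$, and because $\beta|_{\tilde L}$ is a diffeomorphism this cohomology is canonically isomorphic to $H^1(L, \log L \cap D)$.

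The substantive step is to match Hamiltonian equivalence classes on the two sides. In the forward direction: given an exact representative $\tilde \alpha = \dx \tilde f$ with $\tilde f \in C^\infty(\tilde L)$, there is a unique $f \in C^\infty(L)$ with $\tilde f = \beta^* f$. Extending $f$ to a smooth function $F$ on $M$ supported in a neighbourhood of $L$ by a bump function, the elliptic Hamiltonian vector field $X_F = \omega^{-1}(\dx F)$ is defined on the wedge neighbourhood and its time-one flow realises the deformation of $L$ by the graph of $\dx f$. Its pullback $\beta^* X_F$ coincides with the log Hamiltonian vector field of $\beta^* F$ on $\tilde U$, so the induced log Hamiltonian isotopy of $\tilde L$ blows down to the elliptic Hamiltonian isotopy of $L$. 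Conversely, if a small deformation $L'$ of $L$ is (elliptic) Hamiltonian isotopic to $L$ in $U$, its lift $\tilde L'$ is log Hamiltonian isotopic to $\tilde L$ in $\tilde U$ by pulling back the cutoff Hamiltonian, so the associated class $[\tilde \alpha] \in H^1(\tilde L, \log \partial \tilde L)$ vanishes by Proposition \ref{prop:logdef}, and under $\tilde L \cong L$ the corresponding $[\alpha] \in H^1(L, \log L \cap D)$ vanishes as well.

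The hardest point will be verifying that the bump-function extension $F$ of $f$ genuinely produces an \emph{elliptic} Hamiltonian vector field on the wedge neighbourhood, rather than merely one on $M \setminus D$, and that $\beta^* X_F$ is a smooth log vector field integrating to a log Hamiltonian isotopy preserving $\tilde D$. This is precisely where Proposition \ref{prop:ellnbhd} is needed: in its standard coordinates the symplectic form $\omega$ has a transparent local expression, and a smooth $F$ on $M$ pulls back to a smooth function on $\tilde M$, so $\beta^* X_F = \tilde \omega^{-1}(\dx \beta^* F)$ is automatically a smooth log vector field whose flow preserves $\tilde D$. Compactness of $L$ (hence of $\tilde L$) then guarantees that the isotopy stays inside $\tilde U$ for short time, which after blow-down is exactly the \emph{local} Hamiltonian isotopy condition appearing in the theorem.
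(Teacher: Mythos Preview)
Your reduction to the blow-up and your treatment of the direction ``elliptic Hamiltonian isotopy downstairs $\Rightarrow$ exact log form'' are correct and match the paper. The gap is in the opposite direction, precisely at the step you flag as hardest.

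You claim that for $\tilde\alpha = \dx\tilde f$ one may extend $f\in C^\infty(L)$ to a smooth $F\in C^\infty(M)$ and that the time-one flow of $X_F=\omega^{-1}(\dx F)$ carries $L$ to the graph of $\dx f$. This is not justified. In the standard wedge coordinates of Proposition~\ref{prop:ellnbhd} with $\omega_0=\tfrac{\dx r}{r}\wedge \dx y + \dx\theta\wedge \dx x+\sum \dx q^i\wedge \dx p_i$, the only extension of $f(r,x,q_i)$ whose Hamiltonian flow is fibrewise translation by $\dx f$ is the one that is constant in $(\theta,y,p_i)$; but a function of $r$ alone is \emph{not} smooth on $M$ near $D$ (only functions of $r^2$ are), so that extension is illegal. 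Any genuinely smooth extension $F$ will have $\beta^*F$ that is not constant along the fibres of $T^*\tilde L(\log\partial\tilde L)$, and then $X_{\beta^*F}$ has nontrivial base components and its time-one flow does not land on $\operatorname{Graph}(\dx f)$. (Try $f=rx$ with the natural smooth extension $F=(r\cos\theta)x$: the flow equations couple $\theta$ and $x$ and do not give $\theta(1)=r$.) The paper explicitly notes this obstruction: the log Hamiltonian vector field of the fibrewise-constant extension of $f$ does not descend as a smooth elliptic vector field to $(M,D)$.

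The paper's fix is not to produce an elliptic Hamiltonian isotopy directly. Instead it builds, for \emph{any} closed log one-form $\alpha'$ on $L$, a smooth elliptic \emph{symplectic} isotopy from $L$ to the graph of $\alpha'$: extend $\alpha'$ to an elliptic one-form $\alpha$ on the wedge neighbourhood by replacing $r$ by $r\cos\theta$ in the coefficients (which is smooth on $M$), use the fibrewise translations $\psi_t(\xi)=\xi+t\alpha(\xi)$, and correct them to symplectomorphisms by Moser with a primitive $\bar\alpha$ vanishing on $L$. Having both a log Hamiltonian isotopy upstairs and an elliptic symplectic isotopy downstairs, Lemma~\ref{lem:symham} upgrades the latter to an elliptic Hamiltonian isotopy when $\alpha'$ is exact. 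You do not invoke Lemma~\ref{lem:symham}, and without it (or an equivalent device) your forward direction does not go through.
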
 
\begin{rmk} 
Although wedge neighbourhoods are not tubular neighbourhoods in the usual sense, and drop in dimension by one over $\partial L$, these are the natural neighbourhoods containing small deformations of branes with boundary: According to the definition, we only consider deformations whose boundary stays inside $D$. All such small deformations precisely sweep out a wedge neighbourhood of $L$. 
\end{rmk}
In order to prove this theorem, we need the following lemma: 
\begin{lem} \label{lem:symham}
If a brane with boundary and a small deformation of it are related by a local log Hamiltonian isotopy in a tubular neighbourhood in $(\tilde{M},\tilde{D})$ and by a local elliptic symplectic isotopy in the corresponding wedge neighbourhood in $(M,D)$, there is also a local elliptic Hamiltonian isotopy between them in $(M,D)$. 
\end{lem}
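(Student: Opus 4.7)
The plan is to use the wedge neighbourhood normal form to represent $L'$ as the graph of a closed elliptic one-form on $L$, and then to deduce exactness of this form from the upstairs log Hamiltonian isotopy by invoking the cohomology isomorphism induced by real oriented blow-up.

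First I would invoke Proposition~\ref{prop:ellnbhd} to identify the wedge neighbourhood $U$ of $L$ with a neighbourhood of the zero section of $T^*L(\log\abs{L\cap D})$ carrying the canonical elliptic symplectic form $\omega_0$, and the $\beta$-related log tubular neighbourhood $\tilde U$ of $\tilde L$ with a neighbourhood of the zero section of $T^*\tilde L(\log\partial\tilde L)$ carrying the canonical log symplectic form $\tilde\omega_0=\beta^*\omega_0$. The hypothesis that $L$ and $L'$ are related by a local elliptic symplectic isotopy in $U$ ensures that $L'$ is $C^1$-close to $L$ there, so by the same argument used in Section~\ref{sec:logdef} one may write $L' = \operatorname{Graph}(\alpha)$ for some closed $\alpha\in\Omega^1(L,\log\abs{L\cap D})$. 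By naturality of the cotangent constructions under real oriented blow-up, the lift then satisfies $\tilde L' = \operatorname{Graph}(\beta^*\alpha)$ inside $T^*\tilde L(\log\partial\tilde L)$.

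Next, the hypothesis that $\tilde L$ and $\tilde L'$ are related by a local log Hamiltonian isotopy in $\tilde U$, combined with the exactness of $\tilde\omega_0$ and Proposition~\ref{prop:logdef} applied inside $T^*\tilde L(\log\partial\tilde L)$, forces $[\beta^*\alpha]=0$ in $H^1(\tilde L,\log\partial\tilde L)$. Since the blow-up induces an isomorphism $\beta^*\colon H^1(L,\log\abs{L\cap D})\to H^1(\tilde L,\log\partial\tilde L)$ by the lemma of Section~\ref{sec:blow}, this in turn gives $[\alpha]=0$, so $\alpha=\dx g$ for some $g\in C^\infty(L)$. A bump-cutoff extension of $\pi^*g$ to $U$ (where $\pi\colon T^*L(\log\abs{L\cap D})\to L$ is the projection) then has an elliptic Hamiltonian vector field whose time-one flow carries $L$ to $\operatorname{Graph}(\alpha)=L'$ inside $U$, yielding the required local elliptic Hamiltonian isotopy.

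The main obstacle is verifying the compatibility claim $\tilde L' = \operatorname{Graph}(\beta^*\alpha)$, together with the fact that small elliptic Lagrangian deformations of $L$ in the wedge neighbourhood really are graphical in the elliptic cotangent bundle. Both rely on the precise way the wedge neighbourhood is modelled by $T^*L(\log\abs{L\cap D})$ in Proposition~\ref{prop:ellnbhd}, and in particular on the fact that under this identification the elliptic symplectic form is the canonical one and that the real oriented blow-up of this model coincides with $T^*\tilde L(\log\partial\tilde L)$ with its canonical log symplectic form. Once this bookkeeping is in place, the Hamiltonian-versus-symplectic dichotomy proceeds exactly as in the standard closed smooth case.
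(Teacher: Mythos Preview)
Your argument correctly establishes that the log one-form $\tilde\alpha$ (with $\tilde L'=\operatorname{Graph}(\tilde\alpha)$) is exact, but the final step---producing an elliptic Hamiltonian isotopy from the function $\pi^*g$---does not work. The paper points this out explicitly in the proof of Theorem~\ref{thm:defbdy} immediately following this lemma: ``the graph of an exact log one-form $\dx f$ on $L$ \dots\ is log Hamiltonian isotopic to the zero section via the flow of the log Hamiltonian vector field associated to $f$, but this Hamiltonian vector field does not descend as a smooth elliptic vector field to $(M,D)$.'' Concretely, the wedge neighbourhood is \emph{not} identified with a neighbourhood of the zero section in $T^*L(\log\partial L)$; only its blow-up $\tilde U$ is. A function on $L$ depends on the boundary-defining coordinate $r$, so $\pi^*g$ is a function of $r=\sqrt{a^2+b^2}$ on $U$ and is typically not smooth on $M$ at $D$. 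Its Hamiltonian vector field (e.g.\ $r\,\partial g/\partial r\cdot\partial_y+\partial g/\partial x\cdot\partial_\theta+\cdots$ in the local model) therefore fails to be a section of $TM(-\log\abs{D})$.

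The paper avoids this difficulty by not constructing a new isotopy at all. Instead it uses the \emph{given} elliptic symplectic isotopy $\phi_t$ and shows that it is already Hamiltonian, by computing its flux. It lifts $\phi_t$ to a log symplectic isotopy $\tilde\phi_t$ with $\Flux(\{\tilde\phi_t\})=[\alpha]$, composes with $\psi_t^{-1}$ (the inverse of the assumed log Hamiltonian isotopy) and with the translation isotopy $\Lambda_t^{-\alpha}$ to obtain a zero-flux isotopy sending $\tilde L$ to $\operatorname{Graph}(-\alpha)$; Proposition~\ref{prop:logdef} then forces $\alpha$ exact, so $\Flux(\{\tilde\phi_t\})=0$. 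Since $\beta^*$ is an isomorphism on cohomology, $\Flux(\{\phi_t\})=0$ as well, and the elliptic flux theory from Section~\ref{sec:symplecto} gives a local elliptic Hamiltonian isotopy from $L$ to $L'$. In short, you under-use the hypothesis: the elliptic symplectic isotopy is not merely a device to make $L'$ graphical, it is the object whose flux you must show vanishes.
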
 
\begin{proof} 
Let $L,L'\subset (M,D)$ be branes with boundary that are related by an elliptic symplectic isotopy $\phi_t: \phi_1(L)=L'$ contained in a wedge neighbourhood $U$ of $L$, and $\tilde{L}'=\beta^*(L)$ is given as a section of $T^*L(\log \partial L)$ in the Lagrangian neighbourhood of $L$. 
Now, $\phi_t$ lifts to a log symplectic isotopy $\tilde{\phi}_t$ of $\tilde{L}=\beta^*(L), \tilde{L}'=\beta^*(L')$ by lifting the generating elliptic symplectic vector field. Denote 
\[ \operatorname{Flux}(\{\tilde{\phi}_t\})=[\alpha],\]  
where we can choose a representative $\alpha \in \Omega^1(\tilde{L},\log\tilde{L}\cap\tilde{D})$, viewed as a log form on $\tilde{U}$ via pullback, since $\tilde{U}=\beta^{-1}(U)$ is homotopy-equivalent to $\tilde{L}$. 
We further assumed that $\tilde{L},\tilde{L}'$ are related by a Hamiltonian isotopy $\psi_t$ in $(\tilde{M},\tilde{D})$. In particular, this will have vanishing flux, and we know that $\tilde{L}'$ is the graph of an exact log one-form on $\tilde{L}$. 
\[ (\psi_1)^{-1}(\tilde{L'})=\tilde{L}. \]  
Further, there is the standard symplectic isotopy $\Lambda_t^{-\alpha}$ between the zero section and the graph of $-\alpha$ with flux $[-\alpha]$ given by the symplectic vector field associated to $-\alpha$. 
The composition $\Lambda^{-\alpha}_t \circ \psi^{-1}_t \circ \tilde{\phi}_t$ has the properties 
\begin{align*}
(\Lambda^{-\alpha}_1\circ (\psi_1)^{-1} \circ \tilde{\phi}_1)(L)& =\operatorname{Graph}(-\alpha) \\ 
\operatorname{Flux}(\Lambda^{-\alpha}_t \circ (\psi_t)^{-1} \circ \tilde{\phi}_t) &= [\alpha-\alpha]=0 
\end{align*} 
Thus, $L$ and the graph of $-\alpha$ are related by a local Hamiltonian isotopy in the tubular neighbourhood isomorphic to $T^*L(\log \partial L)$, and so according to Proposition \ref{prop:logdef} $\alpha$ must have been an exact form. Thus $\tilde{\phi}_t$ was a Hamiltonian isotopy, and since $\beta^*$ is an isomorphism of elliptic and log cohomology, so was $\phi_t$. Thus $L,L'\subset (M,D)$ are related by Hamiltonian isotopy. 
\end{proof}

\begin{proof}[Proof of Theorem \ref{thm:defbdy}]
In Section \ref{sec:logdef} we have already shown that small deformations of $\tilde{L}=\beta^*(L)\subset \tilde{M}$ up to local Hamiltonian isotopy correspond to $H^1(\tilde{L},\tilde{L}\cap\tilde{D})$. By the definition above, small deformations of $\tilde{L}$ are clearly in one-to-one correspondence with small deformations of $L$. 

Since any elliptic Hamiltonian flow on $(M,D)$ will lift to a log Hamiltonian flow on $(\tilde{M},\tilde{D})$ via the lift of elliptic to log vector fields under real oriented blow-up, it is clear that if two Lagrangian branes with boundary in $(M,D)$ are Hamiltonian isotopic, their preimages in $(\tilde{M},\tilde{D})$ are, too. And thus, if one is a local deformation of the other, it corresponds to an exact log one-form, as long as the Hamiltonian isotopy is local. 

Conversely, we need to show that if a deformation is given as the graph of an exact log form on $\tilde{L}$ in $(\tilde{M},\tilde{D})$, its image in $(M,D)$ is related to the original brane by a smooth elliptic Hamiltonian isotopy. Although this is the simpler direction in the context of ordinary Lagrangians in symplectic manifolds, it turns out to be less intuitive here: Obviously, the graph of an exact log one-form $\dx f$ on $L$ as a Lagrangian in $T^*L(\log \partial L)$ is log Hamiltonian isotopic to the zero section via the flow of the log Hamiltonian vector field associated to $f$, but this Hamiltonian vector field does not descend as a smooth elliptic vector field to $(M,D)$. 

Instead, we begin by constructing a smooth symplectic isotopy between the original brane and the image of the graph of a sufficiently small closed log one-form: 
Let $\alpha' \in \Omega^1_{\operatorname{cl}}(L,\log \partial L)$.
 Note that there is always an extension $\alpha\in \Omega^1(U,\log \abs{D\cap U})$ to some wedge neighbourhood of $L$ such that $\iota^*_L(\alpha)=\alpha'$ and such that $\beta^*\alpha$ defines a map 
\[ \beta^*\alpha: U \rightarrow T^*L(\log \partial L), (\beta^*\alpha)(\xi_p) \in T^*_pL(\log \partial L).  \]
Namely, if $\alpha'= f_r(r,x,q_i) \frac{\dx r}{r} + f_x(r,x,q_i) \dx x +  f_{q_i}(r,x,q_i) \dx q_i$ in local coordinates $(r,x,q_i)$ on $L$, we pick the extension 
\[ \alpha= f_r(r\cos \theta,x,q_i) \frac{\dx r}{r} + f_x(r\cos \theta,x,q_i) \dx x +  f_{q_i}(r\cos \theta,x,q_i) \dx q_i, \] 
which is a smooth elliptic one-form on $(U,U\cap D)$, although it is of course not closed. This extension exists across all of $L$. 
Now consider the following isotopy of diffeomorphisms on a neighbourhood of the zero-section of $T^*L(\log \partial L)$, which descends to $(M,D)$ under the blow-down map: 
\[ \psi_t: \xi_p \mapsto \xi_p + t\alpha(\xi_p).  \]
One can check that $\psi_t$ are indeed diffeomorphisms, as long as we pick $(U,U\cap D)$ and $\alpha'$ to be sufficiently small. We have: 
$\psi_1(L)=\operatorname{Graph}(\alpha')$.
Of course, the $\psi_t$ do not preserve the elliptic symplectic form on $(U,U\cap D)$. Instead (for $\omega=\omega_0$ the standard local elliptic symplectic form on a wedge neighbourhood): $\psi_t^*\omega= \omega + t \dx \alpha$\\
Since $\dx \alpha\vert_L=0$, there is a choice of smooth $\bar{\alpha}\in \Omega^1(U,\log \abs{U\cap D})$ such that $\bar{\alpha}\vert_L=0$ and $\dx \bar{\alpha}=\dx \alpha$. 
Now we apply the relative Moser theorem using the flow $\phi_s$ of the elliptic vector field $\omega^{-1}(t\bar{\alpha})$ (for each $t$), which preserves $L$, and satisfies $\phi_t^*(\psi_t^* \omega)=\omega$. Thus, we have defined an elliptic symplectic isotopy between $L$ and the graph of $\alpha' \in \Omega^1_{\operatorname{cl}}(L,\log \partial L)$. 

If $\alpha'=\dx f$ is an exact log one-form on $L$, the existence of a symplectic isotopy between the two resulting branes with boundary in $(M,D)$ implies the existence of a Hamiltonian isotopy: See Lemma \ref{lem:symham}. 
\end{proof} 

\begin{rmk} 
Lagrangian branes with boundary are coisotropic submanifolds with respect to the Poisson structure $\omega^{-1}$. But in contrast to the Lagrangians which intersect the degeneracy locus transversely and whose deformations we discussed in Section \ref{sec:logdef}, we do not have a standard local form for the Poisson structure on a full tubular neighbourhood of the brane, only on a wedge neighbourhood. However, both the explicit computation of the $L_{\infty}$-structure, as well as the result on deformations of coisotropic submanifolds with respect to fibrewise entire Poisson structures require the Poisson structure to be known on a full tubular neighbourhood. Thus these results are at present not applicable to general Lagrangian branes with boundary. 

We can find examples where $\omega^{-1}$ is fibrewise entire on a full neighbourhood of a brane with boundary $(L,\partial L)$, and where the Maurer-Cartan elements of the $L_{\infty}$-structure on $NL$ do indeed again reduce to $\Omega^1_{\operatorname{cl}}(L,\log \partial L)$. 
\end{rmk} 

\section{Ehresmann connections for log symplectic Lefschetz fibrations} \label{sec:logehr}
Just like for ordinary symplectic structures, it makes sense to consider Lefschetz fibrations which admit a log symplectic structure. They will be Lefschetz fibrations over a surface with a marked hypersurface, and the logarithmic structure is such that the singular locus fibres over that hypersurface in the base. These fibrations have been defined and studied in detail in \cite{Cavalcanti2016} (using slightly different terminology than in this text): 
\begin{defn} 
A \emph{b-manifold} is a pair $(M,Z)$ of a manifold $M$ and a hypersurface $Z\subset M$, equipped with the logarithmic tangent bundle $TM(-\log Z)$. A b-manifold $(M,Z)$ is called \emph{b-oriented} if $TM(-\log Z)$ is oriented. 
A \emph{b-map} between two b-manifolds $(M,Z_M), (N, Z_N)$ is a map $f: M\rightarrow M$ such that $f^{-1}(Z_N)=Z_M$ and $f$ is transverse to $Z_N$. Write $f:(M,Z_M)\rightarrow (N,Z_N)$ . \\
A \emph{b-Lefschetz fibration} or \emph{logarithmic Lefschetz fibration} is a b-map $f: (X^{2n},Z_X)\rightarrow (\Sigma^2,Z_{\Sigma})$ between compact connected b-oriented b-manifolds such that for each critical point $x$ in the set $\Delta$ of all critical points there exist complex coordinate charts compatible with the orientations induced by the b-orientations, centred at $x$ and $f(x)$ in which $f$ takes the form 
\[ f: \Cnum^n \rightarrow \Cnum, (z_1,\dots,z_n)\mapsto z_1^2+\dots + z_n^2 \] 
\end{defn} 
\begin{rmk} \begin{enumerate}[label=(\roman*)]
\item Since a b-map $f$ is transverse to $Z_N$, i.e. $\operatorname{Im}(f)\pitchfork Z_N$, $f:(M,Z_M)\rightarrow (N,Z_N) $ induces a morphism 
\[ f_*: TM(-\log Z_M) \rightarrow TN(-\log Z_N) \] 
which maps $ f_*: R_M \twoheadrightarrow R_N, $
where $R_M=\ker (a_M\vert_{Z_M})\subset TM(-\log Z_M)\vert_{Z_M}, R_N=\ker a_N\vert_{Z_N}\subset TN(-\log Z_N)\vert_{Z_N}$. The respective canonical sections will be mapped to each other at every point. The reason that this is well-defined: Any defining function for $Z_N$ pulls back to a defining function for $Z_M$, because $f^{-1}(Z_N)=Z_M$.  
\item From the local model around a Lefschetz singularity $x \in M$ we can see that $\dx f\vert_x=0$, so since $f$ is transverse on $Z_M$, the set of critical points $\Delta$ and $Z_M$ are disjoint. 
\end{enumerate} 
\end{rmk} 

Let $f:(X,Z_X)\rightarrow (\Sigma,Z_{\Sigma})$ be a logarithmic Lefschetz fibration. There is a commutative diagram of vector bundles over $X$ ($V$ the vertical distribution of $f$): 

\begin{minipage}{\linewidth} \centering
\begin{tikzpicture}[scale=1.1]
\draw[->] (-4.8,0)--(-4.2,0);
\draw[->] (-3.8,0)--(-3.1,0) ;
\draw[->] (-1,0)--(-0.3,0) ;
\draw[->] (2.3,0)--(2.8,0) ;
\draw (-5,0) node{0} (-4,0) node{$V$} (-2,0) node{$TX(-\log Z_X)$} (1,0) node{$f^*(T\Sigma(-\log \Zsig))$} (3,0) node{0} ;
\draw (-4.1,-0.2)--(-4.1,-0.7) (-4,-0.2)--(-4,-0.7) ; 
\draw[->] (-2,-0.2)--(-2,-0.7); \draw[->] (1,-0.2)--(1,-0.7) ; 
\draw (-5,-1) node{0} (-4,-1) node{$V$} (-2,-1) node{$TX$} (1,-1) node{$f^*(T\Sigma)$} (3,-1) node{0} ;
\draw[->] (-4.8,-1)--(-4.2,-1);
\draw[->] (-3.8,-1)--(-2.5,-1) ;
\draw[->] (-1.5,-1)--(0.2,-1) ;
\draw[->] (1.8,-1)--(2.8,-1) ;
\draw (-0.5,0.2)node{$f_*$} ;
\draw (-0.5,-0.8)node{$f_*$} ;
\draw (-1.7,-0.5)node{$a_X$} (0.7,-0.5)node{$a_{\Sigma}$};


\end{tikzpicture} 
\end{minipage} 
See Proposition 2.14 in \cite{Cavalcanti2016} for a proof that $\ker( f_*: TX(-\log Z_X) \rightarrow T\Sigma(-\log \Zsig))$ and $\ker(f_*: TX\rightarrow T\Sigma)$ can indeed be identified via the anchor $a_X: TX(-\log Z_X)\rightarrow TX$. 

When restricted to $Z_X$, we obtain a diagram with exact rows and columns: 

\begin{minipage}{\linewidth} \centering
\begin{tikzpicture}[scale=1.3]
\draw (-2,2)node{0} (1,2)node{0} ;
\draw[->] (-2,1.8)--(-2,1.2); \draw[->] (1,1.8)--(1,1.2) ; 

\draw (-2,1)node{$R_X$} (1,1)node{$R_{\Sigma}$} ;
\draw[->] (-2,0.8)--(-2,0.2); \draw[->] (1,0.8)--(1,0.2) ; 
\draw[->] (-1.5,1)--(0.5,1); 
\draw (-0.5,1.2)node{$f_*$} ; 

\draw[->] (-4.8,0)--(-4.2,0);
\draw[->] (-3.8,0)--(-3.1,0) ;
\draw[->] (-1,0)--(-0.2,0) ;
\draw[->] (2.2,0)--(2.8,0) ;
\draw (-5,0) node{0} (-4,0) node{$V$} (-2,0) node{$TX(-\log Z_X)\vert_{Z_X}$} (1,0) node{$f^*(T\Sigma(-\log \Zsig)\vert_{\Zsig})$} (3,0) node{0} ;
\draw (-4.1,-0.2)--(-4.1,-0.7) (-4,-0.2)--(-4,-0.7) ; 
\draw[->] (-2,-0.2)--(-2,-0.7); \draw[->] (1,-0.2)--(1,-0.7) ; 
\draw (-5,-1) node{0} (-4,-1) node{$V$} (-2,-1) node{$TZ_X$} (1,-1) node{$f^*(T\Zsig)$} (3,-1) node{0} ;
\draw[->] (-4.8,-1)--(-4.2,-1);
\draw[->] (-3.8,-1)--(-2.5,-1) ;
\draw[->] (-1.5,-1)--(0.2,-1) ;
\draw[->] (1.8,-1)--(2.8,-1) ;
\draw (-0.5,0.2)node{$f_*$} ;
\draw (-0.5,-0.8)node{$f_*$} ; 

\draw (-2,-2)node{0} (1,-2)node{0} ;
\draw[->] (-2,-1.2)--(-2,-1.8); \draw[->] (1,-1.2)--(1,-1.8) ; 
\end{tikzpicture} 
\end{minipage}
\begin{prop}\label{prop:ext}
Assume that $\tilde{H}: T\Sigma(-\log \Zsig) \rightarrow TX(-\log Z_X)$ is a section of $f_*:TX(-\log Z_X)\rightarrow T\Sigma(-\log \Zsig)$. If this is such that $\tilde{H}\vert_{Z_X}(R_{\Sigma})=R_X$, $\tilde{H}$ induces an Ehresmann connection $H: T\Sigma \rightarrow TX$ (defined on all of $\Sigma$!). 
\end{prop}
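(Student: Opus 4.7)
The plan is to construct $H$ first on the complement of $Z_X$, where the anchor maps are isomorphisms and the definition is forced, and then to extend smoothly across $Z_X$ using the hypothesis $\tilde{H}(R_\Sigma)=R_X$ to cancel the singularity of $a_\Sigma^{-1}$. Off $Z_X$ I set
\[ H := a_X \circ \tilde{H} \circ a_\Sigma^{-1} : f^*T\Sigma \to TX, \]
which is a section of $f_*: TX \to f^*T\Sigma$ because the commutative diagram preceding the proposition gives $f_* \circ a_X = a_\Sigma \circ f_*$ and $\tilde{H}$ is a section of the log pushforward.

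The main work is the smooth extension across $Z_X$, which I would carry out in local coordinates. Pick $x\in Z_X$, $p=f(x)\in Z_\Sigma$. Choose coordinates $(w, y_2, \dots)$ on $\Sigma$ and $(z_1, z_2, \dots)$ on $X$ with $Z_\Sigma=\{w=0\}$ and $Z_X=\{z_1=0\}$. Since $f$ is a b-map, $f^{-1}(Z_\Sigma)=Z_X$ and $f\pitchfork Z_\Sigma$ imply $f^*w = z_1 g$ with $g$ smooth and nowhere vanishing. In the log bases $\{w\partial_w, \partial_{y_i}\}$ and $\{z_1\partial_{z_1}, \partial_{z_j}\}$ expand
\[ \tilde{H}(w\partial_w) = \alpha(z)\, z_1\partial_{z_1} + \sum_{i\geq 2} \beta_i(z)\, \partial_{z_i}. \]
The hypothesis $\tilde{H}(R_\Sigma)=R_X$ says that on $Z_X$ the right-hand side lies in $\operatorname{span}(z_1\partial_{z_1}\vert_{Z_X})$, hence $\beta_i\vert_{z_1=0}=0$, so $\beta_i = z_1 \beta'_i$ for smooth $\beta'_i$.

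Now on $X\setminus Z_X$ we have $a_\Sigma^{-1}(\partial_w) = w^{-1}(w\partial_w)$, so
\[ H(\partial_w) = \frac{1}{z_1 g}\, a_X\bigl(\alpha z_1\partial_{z_1} + z_1\textstyle\sum_i \beta'_i \partial_{z_i}\bigr) = \frac{\alpha}{g}\,\partial_{z_1} + \sum_{i\geq 2}\frac{\beta'_i}{g}\,\partial_{z_i}, \]
which extends smoothly across $Z_X$. The values $H(\partial_{y_j}) = a_X(\tilde{H}(\partial_{y_j}))$ are already smooth with no further input needed. So $H$ is a well-defined smooth bundle morphism $f^*T\Sigma\to TX$ on all of $X$. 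The section property $f_*\circ H = \mathrm{id}_{f^*T\Sigma}$ extends from the dense open set $X\setminus Z_X$ by continuity.

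Coordinate-independence is automatic because the formula $H=a_X\tilde{H}a_\Sigma^{-1}$ is canonical on the dense open locus where it makes sense, so the unique continuous extension patches globally. The main obstacle is the local divisibility step: the key point is that without the hypothesis $\tilde{H}(R_\Sigma)\subset R_X$ the coefficient $\beta_i$ would only vanish to order zero on $Z_X$, and the factor $w^{-1}=(z_1 g)^{-1}$ coming from $a_\Sigma^{-1}$ would produce a genuine pole; the hypothesis provides exactly the factor of $z_1$ required to cancel it.
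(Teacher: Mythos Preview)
Your proof is correct and follows essentially the same strategy as the paper: define $H=a_X\circ\tilde H\circ a_\Sigma^{-1}$ on the complement of $Z_X$, then use the hypothesis $\tilde H(R_\Sigma)=R_X$ to obtain the divisibility of the offending coefficients by the defining function and hence a smooth extension. The paper simplifies slightly by choosing the defining function for $Z_X$ to be the pullback $x=f^*x'$ (forcing your $g\equiv 1$) and checks coordinate-independence over $Z_X$ by an explicit argument covering the non-orientable normal bundle case, whereas your density argument for well-definedness is cleaner and equally valid.
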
 
\begin{proof} 
First note that $TX\vert_ {X\setminus Z_X} \cong TX(-\log Z_X)\vert_{X\setminus Z_X}$ via the anchor map $a: TX(-\log Z_X) \rightarrow TX$ induced by the inclusion of log vector fields, and similarly $T\Sigma\vert_{\Sigma\setminus \Zsig} \cong T\Sigma(-\log \Zsig)\vert_{\Sigma\setminus \Zsig}$. Thus $\tilde{H}$ induces an Ehresmann connection $H:T\Sigma\vert_{\Sigma\setminus\Zsig} \rightarrow TX\vert_{X\setminus Z_X}$. So it suffices to show that this extends in a well-defined manner and smoothly to $H: T\Sigma \rightarrow TX$. 

Any splitting $s:TZ_{\Sigma}\rightarrow T\Sigma(-\log Z_{\Sigma})\vert_{Z_{\Sigma}}$ of the short exact sequence 
\[0 \rightarrow R_{\Sigma} \rightarrow T\Sigma(-\log \Zsig)\vert_{\Zsig} \rightarrow TZ_{\Sigma} \rightarrow 0 \] 
induces the same map $H: T\Zsig \rightarrow TZ_X$ that is compatible with the splitting outside $\Zsig$: The difference between the two splittings is in $R_{\Sigma}$, so by the assumption $\tilde{H}(R_{\Sigma})=R_X$, 
\[ H=a \circ \tilde{H}\circ s: TZ_{\Sigma} \rightarrow TZ_X \] 
does not depend on the choice of $s$. 

Consider a tubular neighbourhood of $\Zsig$, with $x'$ a local defining function for $\Zsig=\{x'=0\}$. Since $f$ defines a logarithmic Lefschetz fibration, $x=x'\circ f$ defines a local defining function for $Z_X$ on a tubular neighbourhood of $Z_X$. The vector field $x \party{}{x}$ is defined everywhere on the tubular neighbourhood and, as a section of $TX(-\log Z_X)$ its restriction to $Z_X$ generates $R_X$ (and similarly for $x'\party{}{x'}$ on $\Sigma$). \\ 
According to the assumption $\tilde{H}(R_{\Sigma})=R_X$, we have $\tilde{H}(x'\party{}{x'})=x\party{}{x}+x v$, where $v\in \Gamma(V)$ on the tubular neighbourhood. Locally, $\party{}{x'}$ is a normal vector field to $\Zsig$, which extends to the tubular neighbourhood, and the obvious induced Ehresmann connection \emph{outside} $\Zsig$ 
\[H: \party{}{x'} \mapsto \party{}{x} + v \] 
extends to $\Zsig$ itself. In the case where $N\Zsig$ is trivial (which automatically means $NZ_X$ is trivial, too), $x',x$ are coordinates on the entire tubular neighbourhoods of $\Zsig$ or $Z_X$ respectively, and so $\party{}{x'}, \party{}{x}$ are well defined as normal vector fields everywhere. In this case it is obvious that $H$ is well-defined. \\ 
If $N\Zsig$ is not orientable: With a chosen tubular neighbourhood embedding, the normal coordinates (= fibre coordinates for the normal bundle) on different patches around $\Zsig$ are related by multiplication with a non-zero function on $\Zsig$: 
\[ x' \mapsto gx', g\neq 0 \Rightarrow \party{}{x'} \mapsto \frac{1}{g}\party{}{x'} \] 
Since $gx'\party{}{gx'} =x'\party{}{x'}$, we must have $x\party{}{x} + xv = (f^*g)x \party{}{(f^*g)x} + (f^*g)x \bar{v}$, i.e. $\bar{v}=\frac{1}{g} v,$
on the new coordinate neighbourhood -- this makes $H$ as above consistent. 
\end{proof} 

Now assume that  $X$ is equipped with a log symplectic form $\omega$ s.t. the pullback of $\omega$ to $V=\ker f_*$ is non-degenerate, in particular the fibres of $f$  in $X\setminus Z_X$ are \emph{symplectic}. 
We call such b-Lefschetz fibrations \emph{log symplectic}. 
 Consider the unique splitting $\tilde{H}:  T\Sigma(-\log \Zsig) \rightarrow TX(-\log Z_X)$ s.t. the image of $\tilde{H}$ is the symplectic orthogonal of the vertical distribution $V$. 

\begin{prop}\label{prop:symlea} The following two statements are equivalent: 
\begin{enumerate}[label=(\roman*)]
\item On $Z_X$: 
$\tilde{H}(R_{\Sigma})=R_X$ 
\item The fibres of $f\vert_{Z_X}$ are are made up of leaves of the symplectic foliation of $\omega$ in $Z_X$. 
\end{enumerate}
\end{prop}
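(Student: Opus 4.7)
The plan is to reduce both statements to a single pointwise linear-algebraic identity at each point of $Z_X$, and then observe that the identity is a direct consequence of the $\omega$-orthogonality that defines $\tilde{H}$.

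First I would translate (ii) into a pointwise condition on distributions in $TZ_X$. The fibres of $f|_{Z_X}: Z_X \to Z_\Sigma$ are precisely the integral manifolds of the vertical distribution $V \subset TZ_X$, of rank $2n-2$. On the other hand, by the standard description of the symplectic foliation of a log symplectic form, the symplectic leaves of $\omega^{-1}$ inside $Z_X$ are the integral manifolds of $\ker(\res\omega) \subset TZ_X$; since $\omega$ is non-degenerate as a log form, $\res\omega$ is a nowhere-vanishing closed $1$-form on $Z_X$, so $\ker(\res\omega)$ is again a rank-$(2n-2)$ distribution. Thus (ii) is equivalent to the pointwise identity $V = \ker(\res\omega)$ on $Z_X$, and because the two distributions have the same rank it suffices to check the inclusion $V \subseteq \ker(\res\omega)$.

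Next I would pin down the general form of $\tilde{H}(x'\tfrac{\partial}{\partial x'})$ at a point of $Z_X$, using only that $\tilde{H}$ is a splitting of $f_*$. Choosing a (local) splitting $TX(-\log Z_X)|_{Z_X} \cong R_X \oplus TZ_X$ and using that $f_*$ sends the canonical section $x\tfrac{\partial}{\partial x}$ of $R_X$ to the canonical section $x'\tfrac{\partial}{\partial x'}$ of $R_\Sigma$, I can write
\[ \tilde{H}\!\left(x'\tfrac{\partial}{\partial x'}\right)\Big|_{Z_X} = x\tfrac{\partial}{\partial x}\Big|_{Z_X} + Y, \]
where $Y \in V$: the $R_X$-coefficient must be $1$ to be compatible with $f_*$, and the $TZ_X$-part must project to zero in $TZ_\Sigma$, hence lies in $V$. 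Condition (i) is then exactly equivalent to $Y = 0$ at every point of $Z_X$.

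Now I would bring in the defining property of $\tilde{H}$: its image is the $\omega$-orthogonal of $V$. For any $v \in V$ (viewed as a log vector field via $TX \supset TZ_X \hookrightarrow TX(-\log Z_X)$), one has $\omega(\tilde{H}(x'\tfrac{\partial}{\partial x'}), v) = 0$. Restricting this smooth function to $Z_X$ and substituting the decomposition above yields
\[ \res(\omega)(v) \,+\, \omega(Y,v)\big|_{Z_X} \,=\, 0 \qquad \forall\, v \in V, \]
since $\omega(x\tfrac{\partial}{\partial x}, v)|_{Z_X} = \res(\omega)(v)$ by definition of the residue.

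Finally I would read off the equivalence. Condition (ii) says $\res(\omega)|_V \equiv 0$, and the displayed identity turns this into $\omega(Y, v)|_{Z_X} = 0$ for every $v \in V$. Since $Y \in V$ and $\omega|_V$ is non-degenerate by the log symplectic Lefschetz hypothesis, this is equivalent to $Y = 0$, which in turn is (i). Conversely, if $Y = 0$ then the identity forces $\res(\omega)|_V \equiv 0$, giving (ii). The only subtle point I anticipate is keeping careful track of the distinction between smooth and logarithmic vector fields when restricting expressions like $\omega(xv_0, v)$ to $Z_X$; that $xv_0$ vanishes on $Z_X$ as a section of $TX(-\log Z_X)$ is what makes the residue computation clean.
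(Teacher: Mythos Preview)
Your proof is correct and follows essentially the same approach as the paper: both arguments hinge on the observation that $R_X \subset V^{\omega\text{-orth}} = \operatorname{Im}\tilde{H}$ is equivalent to $\res(\omega)|_V = 0$, together with the dimension count $\operatorname{rk} V = \operatorname{rk}\ker(\res\omega) = 2n-2$. The paper treats the two implications separately, while you package both directions into the single identity $\res(\omega)(v) + \omega(Y,v)|_{Z_X} = 0$ (using the decomposition $\tilde{H}(x'\tfrac{\partial}{\partial x'}) = x\tfrac{\partial}{\partial x} + Y$ with $Y \in V$, which follows directly from $\tilde{H}$ being a section of $f_*$) and then invoke non-degeneracy of $\omega|_V$; this is a clean way to make the equivalence visible at once, but not a genuinely different route.
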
 
\begin{proof}
(i)$\Rightarrow$ (ii): Since the image of $\tilde{H}$ is the symplectic orthogonal of $V$, we obtain $i_{R_X} \omega\vert_V=0$, i.e. $V\subset \ker(\res \omega)$. But $\ker(\res\omega)$ precisely defines the symplectic foliation of $\omega$ in $Z_X$, and since both it and $V$ have dimension $2n-2$, the fibres of $f\vert_{Z_X}$ must be made up of symplectic leaves of $\omega$. \\
(i)$\Leftarrow$ (ii):  Now by assumption $V\subset \ker(\res\omega)\Rightarrow R_X\subset V^{\omega-\text{orth}}$. Since the image of $\tilde{H}$ is the log symplectic orthogonal of $V$, there has to be a $Y \in T\Sigma(-\log \Zsig)\vert_{\Zsig}$ s.t. $\tilde{H}(Y)$ spans $R_X$. $\Rightarrow Y\in R_{\Sigma}$, and $\tilde{H}(R_{\Sigma})=R_X$. 
\end{proof} 
\begin{rmk} 
All logarithmic Lefschetz fibrations we want to consider satisfy the conditions of the Proposition: 
In \cite{Cavalcanti2016}, Theorem 3.4 and 3.7, a log symplectic structure $\omega_X$ for $f:(X^4,Z_X^2)\rightarrow (Y^2,Z_Y^1)$ (with orientable, compact, homologically essential fibres $F$ and compact base $Y^2$) is constructed from a log symplectic structure $\omega_Y$ for $(Y^2,Z_Y^1)$. 
This uses a closed, and non-degenerate fibrewise smooth form, as well as the pullback of $\omega_Y$. Since the logarithmic (singular) term is pulled back from the base, $\ker(\res\omega_X)$ will contain the tangent spaces to the fibres of the fibration, i.e. the tangent spaces to the symplectic leaves. 
\end{rmk} 

\begin{cor} 
Given a fibration as in Proposition \ref{prop:symlea}, a path $\gamma:[0,1]\rightarrow Y$ with $\gamma(1)\in Y_Z$ s.t. $\gamma$ intersects $Z$ transversely, and a Lagrangian submanifold $l\subset F$ of a (regular) fibre $F$ of $f$, the Lagrangian $L$ swept out by parallel transport of $l$ along $\gamma$ will intersect $Z_X$ transversely. 
\end{cor}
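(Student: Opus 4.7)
My plan is to verify transversality pointwise: at each intersection point $p \in L \cap Z_X$ I will exhibit a tangent vector of $L$ whose component along a chosen normal direction to $Z_X$ is nonzero, while the remaining tangent directions of $L$ at $p$ lie inside $T_pZ_X$. Since $Z_X$ has codimension one, this suffices. Let $t_0 \in [0,1]$ be any time with $\gamma(t_0) \in \Zsig$, and let $p$ be a point of $L$ lying over $\gamma(t_0)$, so that $p \in Z_X = f^{-1}(\Zsig)$. I would fix a local defining function $x'$ for $\Zsig$ near $\gamma(t_0)$ and set $x := f^*x'$, which by the b-map condition is a local defining function for $Z_X$ near $p$.

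The key ingredient is the computation already carried out in the proof of Proposition \ref{prop:ext}: under the hypothesis of Proposition \ref{prop:symlea}, the log symplectic horizontal lift $\tilde H$ sends the generator $x'\party{}{x'}$ of $R_\Sigma$ to $x\party{}{x} + xv$ for some vertical $v \in \Gamma(V)$, so the induced smooth Ehresmann connection $H\colon T\Sigma \to TX$ satisfies
\[ H\!\left(\party{}{x'}\right) = \party{}{x} + v. \]
At the same time, $H$ maps $T_{\gamma(t_0)}\Zsig$ into $T_pZ_X$, because $H = a \circ \tilde H \circ s$ and the anchor of any log vector field is automatically tangent to $Z_X$ at $Z_X$. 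By the transversality of $\gamma$ with $\Zsig$, one can write $\gamma'(t_0) = c\party{}{x'} + w$ with $c \neq 0$ and $w \in T_{\gamma(t_0)}\Zsig$, and then
\[ H(\gamma'(t_0)) = c\party{}{x} + \bigl(cv + H(w)\bigr), \]
whose $\party{}{x}$-component is $c \neq 0$.

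The last step is to decompose $T_pL$. Since $L$ is the sweep-out of $l$ by parallel transport along $\gamma$ via $H$, the tangent space $T_pL$ is spanned by $H(\gamma'(t_0))$ together with the tangent vectors to the parallel-transported Lagrangian inside the fibre $f^{-1}(\gamma(t_0))$. These fibre-tangent vectors are vertical, and because $f^{-1}(\Zsig) = Z_X$ the fibres over $\Zsig$ lie entirely inside $Z_X$, so these vectors lie in $T_pZ_X$. Combined with $H(\gamma'(t_0))$, whose $\party{}{x}$-component is nonzero, they yield $T_pL + T_pZ_X = T_pX$, proving $L \pitchfork Z_X$ at $p$.

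The main subtlety I foresee is verifying that $L$ is genuinely a smooth submanifold across points lying over $\Zsig$, i.e.\ that parallel transport along $\gamma$ is well defined and smooth through each crossing of $\Zsig$. This is precisely what Proposition \ref{prop:ext} supplies: $H$ extends to a bona fide Ehresmann connection on all of $\Sigma$, so parallel transport of $l$ along $\gamma$ produces a smooth submanifold $L \subset X$, and the pointwise argument above applies.
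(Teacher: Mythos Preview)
Your argument is correct and is exactly the argument the paper has in mind: the corollary is stated without proof in the paper, as an immediate consequence of Propositions~\ref{prop:ext} and~\ref{prop:symlea}, and you have simply spelled out the details---the smooth Ehresmann connection $H$ exists, the horizontal lift $H(\gamma'(t_0))$ has nonzero $\party{}{x}$-component because $H(\party{}{x'})=\party{}{x}+v$, and the remaining fibre-tangent directions of $T_pL$ lie in $T_pZ_X$.
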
 

\section{Stable generalized complex Lefschetz fibrations under blow-up} \label{sec:gclef}
Let $(M,D,\omega)$ be a manifold with elliptic divisor $D$. Let $(\tilde{M},\tilde{D},\tilde{\omega})$ be the real oriented blow-up of $D$. Let $\beta: \tilde{M}\rightarrow M$ be the blow-down map s.t. $\beta\vert_{\tilde{D}}:\tilde{D}\rightarrow D$ is a $U(1)$-principal bundle. 

Analogously to logarithmic Lefschetz fibrations, there is a notion of Lefschetz fibration for manifolds equipped with an elliptic divisor. \cite{Cavalcanti2017} define and study these so-called \emph{boundary Lefschetz fibrations} in detail as a specific case of \emph{Lie algebroid Lefschetz fibrations}. Let $(\Sigma, Z)$ be a surface with a separating hypersurface (i.e. a line) $Z$. 
\begin{defn} 
A \emph{strong map of pairs} $f:(M,D)\rightarrow (\Sigma,Z)$ is a map with $f^{-1}(Z)=D$. 

Let $f:(M,D)\rightarrow (\Sigma,Z)$ be a map of pairs such that $\operatorname{Im}(f_*)\subset TZ$. The \emph{normal Hessian} of $f$ along $D$ is the map 
\[ H^{\nu}(f): \operatorname{Sym}^2(ND)\rightarrow f^*(NZ) \] 
which associates to $f$ its normal Hessian at each point: Since $\operatorname{Im}(f)\subset TZ$, the map 
\[ \nu(\dx f): ND\rightarrow NZ\] 
is the zero map. We can consider a local defining function $z$ for $Z$ and set $h:=f^*z$, which satisfies $\dx h\vert_D=0$. $H^{\nu}(f)$ is defined as the Hessian of this function at each point; it is easy to check that this is independent of the chosen defining function $z$. \\
A strong map of pairs $f: (M,D)\rightarrow (\Sigma,Z)$ is called a \emph{boundary map} if its normal Hessian $H^{\nu}(f)$ is definite along $D$. 
A boundary map $f$ is called \emph{fibrating} if $f\vert_D:D\rightarrow Z$ is a submersion. \\
A \emph{boundary Lefschetz fibration} is a fibrating boundary map $f$ such that $f\vert_{X\setminus D}: X\setminus D\rightarrow \Sigma\setminus Z$ is a Lefschetz fibration (see Definition 5.21 in \cite{Cavalcanti2017}). 
\end{defn}
\begin{rmk} \begin{enumerate}[label=(\roman*)]
\item Any fibrating boundary map $f:(M,D)\rightarrow (\Sigma,Z)$ is a submersion in a punctured neighbourhood around $D$. (Of course, $f$ is \emph{not} submersive on a full open neighbourhood of $D$.)
\item By passing to a cover of $\Sigma$, we can always assume that the generic fibres of a boundary Lefschetz fibration are connected. If the generic fibres near $D$ are connected, the fibres of $f\vert_D: D\rightarrow Z$ are also. 
\end{enumerate} 
\end{rmk}  

First note the following (see \cite{Cavalcanti2017}): If $f:(M,D)\rightarrow (\Sigma,Z)$ is a boundary map to the surface $\Sigma$ such that $Z$ is a separating submanifold, $NZ$ is in particular orientable and there is a global defining function $z$ for $Z$ s.t. $f(X)\subset \Sigma_+$, the locus where $z\geq 0$. Then $f$ defines a boundary map $f:(M,D)\rightarrow (\Sigma',Z')$, where $\Sigma'=\Sigma_+\cap f(X),Z'=Z\cap f(D)$. So if $Z$ is separating, we can always assume that it is in fact the boundary of $\Sigma$. 

Furthermore, whenever $(\Sigma,Z)$ is any manifold admitting a log symplectic structure which is also oriented, $Z$ is separating. 

\begin{prop} \label{prop:Lefschetzbu}
Assume that $f:(M,D)\rightarrow (\Sigma,Z)$ is a boundary Lefschetz fibration over a surface with boundary $(\Sigma, \partial\Sigma=Z)$. Then there exists a real branched double cover of $(\Sigma,Z)$ over $\tilde{Z}\cong Z$, $\rho: (\tilde{\Sigma},\tilde{Z}) \rightarrow (\Sigma,Z)$ (in which $\tilde{Z}$ is separating) s.t. $f$ factors through $(\tilde{\Sigma},Z)$: 
$ f= \rho \circ f'$, and 
\[f' \circ\beta: (\tilde{M},\tilde{D})\rightarrow (\tilde{\Sigma},\tilde{Z}) \] 
is a logarithmic Lefschetz fibration. 
\end{prop}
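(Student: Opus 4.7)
The plan is to take $\tilde{\Sigma}$ to be the standard real branched double cover of $\Sigma$ branched along $Z=\partial\Sigma$, to define $f'$ as the tautological continuous lift of $f$ through $\rho$, and then to verify by a local calculation that the composition $f'\circ\beta$, although typically not even well-defined as a smooth map on $M$, is smooth on $\tilde{M}$ and is in fact a logarithmic Lefschetz fibration. The mechanism behind this is that the definite normal Hessian hypothesis forces the pullback $f^{*}t$ (for $t$ a local defining function of $Z$) to be a positive smooth multiple of the elliptic defining function $r^{2}$, so the apparently non-smooth square root $\sqrt{f^{*}t}$ becomes smooth precisely when the real oriented blow-up provides $r$ as a smooth coordinate upstairs.

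To set up $\tilde{\Sigma}$, choose a collar neighbourhood $Z\times [0,\epsilon)$ of $Z$ in $\Sigma$ with coordinate $t$, and form $\tilde{\Sigma}$ by gluing to $\Sigma\setminus Z$ two copies of the collar via a chart in which $\rho$ takes the form $(z,s)\mapsto (z,s^{2})$ with $s\in (-\sqrt{\epsilon},\sqrt{\epsilon})$. This makes $\tilde{\Sigma}$ a closed smooth surface in which $\tilde{Z}=\{s=0\}\cong Z$ is a smooth separating hypersurface and $\rho$ is a smooth branched double cover with branch locus $\tilde{Z}$. I then define $f'\colon M\to \tilde{\Sigma}$ by sending each $m\in M\setminus D$ to the unique lift of $f(m)$ lying in a fixed sheet of $\tilde{\Sigma}\setminus\tilde{Z}$ and each $m\in D$ to the corresponding point of $\tilde{Z}$; tautologically $f=\rho\circ f'$, and $f'$ is continuous but in general not smooth across $D$, since the branch structure of $\rho$ forces $f'$ to encode a square root of $f^{*}t$.

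The central calculation is local, near a point $p\in D$. The fibrating boundary map hypothesis lets me pick adapted coordinates $(y_{i},z',a,b)$ on $M$ with $D=\{a=b=0\}$ and elliptic divisor $r^{2}=a^{2}+b^{2}$, together with coordinates $(z',t)$ on $\Sigma$ with $Z=\{t=0\}$, such that the $z'$-component of $f$ is $z'$ itself. Since $f^{*}t$ vanishes critically along $D$ with positive-definite normal Hessian, $f^{*}t=h\cdot r^{2}$ for some smooth positive function $h$. The blow-up supplies smooth coordinates $(y_{i},z',r,\theta)$ with $r\geq 0$, and in these coordinates
\[ f'\circ \beta \colon (y_{i},z',r,\theta)\longmapsto (z',\,r\sqrt{h}), \]
which is smooth on $\tilde{M}$ because $h>0$ and $r$ is a smooth coordinate. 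Moreover $\rho\circ(f'\circ \beta)=(z',hr^{2})=f\circ \beta$, so the factorisation commutes upstairs.

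To finish, I would verify the remaining conditions. Locally $(f'\circ \beta)^{-1}(\tilde{Z})=\{r=0\}=\tilde{D}$, and transversality of $f'\circ\beta$ to $\tilde{Z}$ follows from $\partial_{r}(r\sqrt{h})\vert_{r=0}=\sqrt{h}\vert_{r=0}>0$; thus $f'\circ \beta$ is a smooth b-map. Away from $\tilde{D}$, both $\beta$ and the relevant local branch of $\rho$ are diffeomorphisms, so $f'\circ \beta$ inherits the Lefschetz singularities of $f$, which lie in $\tilde{M}\setminus \tilde{D}$ because $\Delta\cap D=\emptyset$. I expect the main obstacle to be precisely the smoothness check for $f'\circ\beta$ along $\tilde{D}$: although the naive lift $f'$ is only continuous on $M$ because it involves the non-smooth $\sqrt{f^{*}t}$, this is exactly the type of singularity resolved by the real oriented blow-up, and the resolution uses the positive-definite normal Hessian hypothesis in an essential way. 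A secondary point, the global consistency of the sheet choice for $f'$, is handled by the connectedness of $\Sigma\setminus Z$ and hence of one connected component of $\tilde{\Sigma}\setminus \tilde{Z}$.
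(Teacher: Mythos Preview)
Your approach is essentially the same as the paper's: construct the branched double cover $\tilde{\Sigma}$, lift $f$ through it, and verify locally near $D$ that $f'\circ\beta$ is a b-map. The paper takes a shortcut you do not: it invokes the normal form for fibrating boundary maps from \cite{Cavalcanti2017}, which furnishes coordinates in which $f(r,\theta,x_3,\dots,x_{2n})=(r^2,x_{2n})$ exactly, so that $f'=(r,x_{2n})$ and the b-map checks become trivial.

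Your direct route via $f^{*}t=h\cdot r^{2}$ is the right idea but contains an imprecision: the quotient $h=f^{*}t/r^{2}$ is in general \emph{not} smooth on $M$. For instance, with $f^{*}t=a^{2}+b^{2}+a^{3}$ one gets $h=1+a^{3}/(a^{2}+b^{2})$, which is not $C^{1}$ at the origin. What is true, and is all you actually need, is that $\beta^{*}(f^{*}t)=r^{2}H$ with $H$ smooth and positive on $\tilde{M}$ near $\tilde{D}$: apply Hadamard's lemma in the smooth coordinate $r$ on the blow-up, and note that $H\vert_{r=0}$ is the normal Hessian of $f$ evaluated on the unit normal direction $(\cos\theta,\sin\theta)$, hence strictly positive by the definiteness hypothesis. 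With that correction your argument goes through unchanged; the paper's normal-form citation simply absorbs this step into a black box.
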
 

\paragraph{\textbf{Branched cover over a surface with boundary}} Around every boundary component of $\Sigma$ (either $\Rnum$ or $S^1$), there is a collar neighbourhood s.t. the boundary component is given by the vanishing of a positive coordinate. Denote this coordinate by $x$. For simplicity, write $Z$ for a single boundary component. 

Now, a trivial branched cover of $\Sigma$ over the boundary $Z$ can be constructed as follows: Consider $Z\times [0,1) \times \Rnum$ (where the first two factors are a collar neighbourhood of $Z$ in $\Sigma$). There is a smooth surface defined by the graph of $y^2=x$ inside $Z\times [0,1) \times \Rnum \ni (z,x,y)$. Its closure has boundary $Z\times \{\pm1\}$, so we have doubled the previous boundary of the collar neighbourhood. 
To obtain the full branched double cover $\tilde{\Sigma}$, glue one copy each of $\Sigma\setminus(Z\times [0,1))$ to each \enquote{arm} of the new branched surface. If there are multiple boundary components, glue one to all the $+1$-boundaries, the other to the $-1$-boundaries. Up to diffeomorphism, this procedure defines a unique new surface $\tilde{\Sigma}$ without boundary, in which $Z$ is a separating hypersurface. $\rho: \tilde{\Sigma}\rightarrow \Sigma$, locally around $Z$ defined by 
\[ (z,y^2,y)\mapsto (z,y^2)\ (z\in Z) \] 
and away from the boundary components by the identity on each leaf, defines a branched covering. 

\begin{proof}[Proof of Proposition  \ref{prop:Lefschetzbu}.]
Let $\rho: (\tilde{\Sigma},\tilde{Z})\rightarrow (Z,\Sigma)$ be a branched double cover as just constructed. \cite{Cavalcanti2017} prove a standard local form for boundary Lefschetz fibrations: There are coordinates $(r,\theta, x_3,\dots,x_{2n})$ around $D$ in $(M,D)$ and $(x,z)$ around $Z$ in $(\Sigma, Z)$ ($x$ a defining function for $Z$) such that the boundary Lefschetz fibration $f: (M,D)\rightarrow (\Sigma,Z)$ near $D$ takes the form 
\[ f(r,\theta,x_3,\dots,x_{2n})= (r^2,x_{2n}) \] 
This factors through the $+1$-arm of $(\tilde{\Sigma},\tilde{Z})$ with coordinates $(y,z)$ around $\tilde{Z}$ ($y$ a defining function for $\tilde{Z}$) as 
\[ f: (r,\theta,x_3,\dots,x_{2n})\stackrel{f'}{\rightarrow} (r,x_{2n}) \stackrel{\rho}{\rightarrow} (r^2,x_{2n}) \] 
If we compose this $f'$ with the blow-down map $\beta$, we obtain a logarithmic Lefschetz fibration 
\[ \tilde{f}=f' \circ \beta: (\tilde{M},\tilde{D})\rightarrow (\tilde{\Sigma},\tilde{Z}), \] 
since $\tilde{f}^{-1}(\tilde{Z})=\tilde{D}$. $\beta$ is a submersion, and $f'_*(\party{}{r})=\party{}{x}$, so $\tilde{f}$ is transverse to $\tilde{Z}$. 
\end{proof} 

\section{Lefschetz thimbles in stable generalized complex Lefschetz fibrations} \label{sec:thimb}
From the previous section we know that a boundary Lefschetz fibration $f:(M,D)\rightarrow (\Sigma,Z)$ blows up to a log Lefschetz fibration $\tilde{f}:(\tilde{M},\tilde{D}) \rightarrow (\Sigma,Z)$. Assume that $(M,D)$ is equipped with a stable GC structure given by the elliptic symplectic form $\omega$ in such a way that $\omega$ is non-degenerate on $\ker(f_*: TM(-\log \abs{D})\rightarrow T\Sigma(-\log Z))$, in particular the fibres of $f$ in $M\setminus D$ are symplectic. (Similarly to log Lefschetz fibrations, the fibres of a boundary Lefschetz fibration are either entirely in $D$ or entirely in $M\setminus D$.) We call this a \emph{stable generalized complex Lefschetz fibration}. 

\begin{prop} \begin{enumerate}[label=(\roman*)]
\item A stable GC Lefschetz fibration $f:(M,D)\rightarrow (\Sigma,Z)$ induces the structure of a log symplectic Lefschetz fibration on $\tilde{f}: (\tilde{M},\tilde{D})\rightarrow (\Sigma,Z)$. 
\item If $\tilde{H}: f^*(T\Sigma(-\log Z)) \rightarrow TM(-\log\abs{D})$ is the elliptic Ehresmann connection associated to $\omega$, we have: 
\[ \tilde{H}(R_{\Sigma})=R_M \Leftrightarrow \text{The fibres of }f\vert_D \text{ are given by }\ker(\res_r\omega), \]
where $R_M\subset \ker a, a: TM(-\log\abs{D})\vert_D \rightarrow TD$ is the subbundle spanned by the Euler vector field on $ND$. 
\end{enumerate} 
\end{prop}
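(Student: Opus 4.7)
The plan is to reduce both parts to statements already established in the paper, transporting them via the blow-up $\beta$. For part~(i), the ingredients are Proposition~\ref{prop:Lefschetzbu} (which already provides $\tilde f$ as a logarithmic Lefschetz fibration after factoring through the branched cover) and Theorem~\ref{thm:blowup11} (which provides $\tilde\omega=\beta^*\omega$ as a log symplectic form); the only extra thing to check is non-degeneracy on the vertical distribution. For part~(ii), I would imitate the pointwise linear-algebra argument of Proposition~\ref{prop:symlea}, with the radial residue $\res_r\omega$ playing the role that $\res\omega$ plays in the log case.

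For (i), write $V=\ker f_*\subset TM(-\log\abs{D})$ and $\tilde V=\ker\tilde f_*\subset T\tilde M(-\log\tilde D)$. Since $\tilde f=f'\circ\beta$ and $f=\rho\circ f'$ with $\rho$ a local diffeomorphism away from $Z$, the morphism $\beta_*:T\tilde M(-\log\tilde D)\rightarrow TM(-\log\abs{D})$ restricts to a bundle map $\tilde V\rightarrow V$ over $\beta$. Outside $\tilde D$ this is an isomorphism because $\beta$ is a diffeomorphism; over $\tilde D$, the explicit description of $\beta_*$ on the summands $\tilde{\mathbf R}\oplus\mathfrak t'\oplus\beta^*(TD)$ and $\mathbf R\oplus\mathfrak t\oplus TD$ from the proof preceding Theorem~\ref{thm:blowup11} shows it is a fibrewise bijection that matches the two vertical kernels. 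Since $\tilde\omega_{\tilde p}(\tilde X,\tilde Y)=\omega_{\beta(\tilde p)}(\beta_*\tilde X,\beta_*\tilde Y)$ by the very definition of pullback, non-degeneracy of $\omega|_V$ as an elliptic $2$-form transfers pointwise to non-degeneracy of $\tilde\omega|_{\tilde V}$ as a log $2$-form.

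For (ii), $\tilde H$ is by definition the unique section of $f_*$ whose image is the elliptic symplectic orthogonal $V^{\perp_\omega}$. The central technical identity to establish is that for any $v\in V|_D$,
\[
\omega\!\left(r\party{}{r},\,v\right)\bigg|_D \;=\; (\res_r\omega)\bigl(a(v)\bigr).
\]
I would prove this by decomposing $V|_D$ as the sum of the line spanned by $\party{}{\theta}$ and a smooth lift $\tilde V'$ of $a(V|_D)\subset TD$: the contribution from $\party{}{\theta}$ vanishes precisely because $\omega$ has vanishing elliptic residue (the stable GC hypothesis), while the contribution from $\tilde V'$ is the radial residue by its very definition. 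Granting the identity, $\tilde H(R_\Sigma)=R_M$ is equivalent to $r\party{}{r}\in V^{\perp_\omega}|_D$: $f_*(r\party{}{r})$ spans $R_\Sigma$ nontrivially because the normal Hessian of the boundary map $f$ is positive definite (in the local model of Proposition~\ref{prop:Lefschetzbu}, $f_*(r\party{}{r})=2x\party{}{x}$), $V^{\perp_\omega}|_D$ has rank $2$, and $\tilde H(R_\Sigma)$ is the unique line inside it that projects onto $R_\Sigma$. In turn, $r\party{}{r}\in V^{\perp_\omega}|_D$ translates via the identity to $a(V|_D)\subseteq\ker(\res_r\omega)$, and a rank count (both sides are $(2n-3)$-dimensional, assuming $\res_r\omega\neq 0$) promotes inclusion to equality, which is precisely the statement that the fibres of $f|_D$ integrate $\ker(\res_r\omega)$.

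The hard part is the residue identity and the bookkeeping that distinguishes the two elliptic residues from the single log residue; in particular the vanishing of the elliptic residue is exactly what makes the $\party{}{\theta}$ term drop out. Conceptually this is the content that is invisible after blow-up: on $\tilde M$ there is only one residue, and the proposition becomes a literal translation of Proposition~\ref{prop:symlea}. One should also note non-vanishing of $\res_r\omega$ as a hypothesis (or a generic condition) so that the phrase \enquote{the fibres of $f|_D$ are given by $\ker(\res_r\omega)$} has unambiguous meaning.
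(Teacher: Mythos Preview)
Your proposal is correct and follows essentially the same route as the paper: for (i) you use that $\beta_*$ is a fibrewise isomorphism taking $\tilde V$ to $V$ so non-degeneracy transfers via pullback, and for (ii) you reduce $\tilde H(R_\Sigma)=R_M$ to $r\partial_r\in V^{\perp_\omega}|_D$, translate this via the residue identity to $a(V|_D)\subset\ker(\res_r\omega)$, and close with a rank count, exactly as the paper does. Your write-up is in fact more explicit than the paper's on two points the paper leaves implicit: the precise residue identity $\omega(r\partial_r,v)|_D=(\res_r\omega)(a(v))$ and the role of the vanishing elliptic residue in killing the $\partial_\theta$ contribution (the paper only invokes it in the ``$\Leftarrow$'' direction, and your observation that $\res_r\omega\neq 0$ is needed for the rank count is a genuine clarification).
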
 
\begin{proof} 
\begin{enumerate}[label=(\roman*)] 
\item From the real oriented blow-up of $D$ inside $M$, we obtain the following commutative diagram with exact rows: 

\begin{minipage}{\linewidth} \centering
\begin{tikzpicture}[scale=0.8]
\draw (-5,0)node{0} (-3,0)node{$\tilde{V}$} (0,0)node{$T\tilde{M}(-\log\tilde{D})$} (4,0)node{$\tilde{f}^*(T\Sigma(-\log Z))$} (7,0)node{0};
\draw[->] (-4.8,0)--(-3.5,0); \draw[->] (-2.5,0)--(-1.5,0) ;  
\draw[->] (1.5,0)--(2.3,0) ;  \draw[->] (5.7,0)--(6.8,0) ; 
\draw[->] (-3,-0.5)--(-3,-1.5); \draw[->] (0,-0.5)--(0,-1.5) ; \draw[->] (4,-0.5)--(4,-1.5) ; 
\draw  (-5,-2)node{0} (-3,-2)node{$V$} (0,-2)node{$TM(-\log\abs{D})$} (4,-2)node{$f^*(T\Sigma(-\log Z))$} (7,-2)node{0};
\draw[->] (-4.8,-2)--(-3.5,-2); \draw[->] (-2.5,-2)--(-1.5,-2) ;  
\draw[->] (1.5,-2)--(2.3,-2) ;  \draw[->] (5.7,-2)--(6.8,-2) ; 
\draw (-0.5,-1)node{$\beta_*$} (3.5,-1)node{$\rho_*$} (4.5,-1)node{$\cong$} ;
\end{tikzpicture} 
\end{minipage}
Note that $\beta_*$ is fibrewise an isomorphism, and it induces a fibrewise isomorphism on $\tilde{V}$. Thus if $\omega$ is non-degenerate on $V$, $\tilde{\omega}=\beta^*\omega$ is non-degenerate on $\tilde{V}$.
\item \enquote{$\Rightarrow$}: By definition of $\tilde{H}$, $\operatorname{Im}(\tilde{H})$ is the symplectic orthogonal of $V$ in $TM(-\log \abs{D}))$, so $i_{R_M}\omega \vert_V=0$. Consider $V'=a(V)\subset TD$. We have $\res\omega(v)=\iota_D^*(i_{R_M}\omega)(v)=0\ \forall v\in V'$, so $V'\subset \ker(\res\omega)$. But both $\ker(\res\omega)$ and $V'$ have rank $2n-3$, so they are equal. \\ 
\enquote{$\Leftarrow$}: Now we assume that the fibres of $f\vert_D$ have the tangent distribution $V'=\ker(\res\omega)$. Since the elliptic residue of $\omega$ is zero, this implies $R_M\subset V^{\omega-\text{orth}}$. Thus there exists $Y\in T\Sigma(-\log Z)$ such that $\tilde{H}(Y)$ spans $R_M$, and since $f_*(R_M)=R_{\Sigma}, Y$ spans $R_{\Sigma}$, so $\tilde{H}(R_{\Sigma})=R_M$. 
\end{enumerate} 
\end{proof}

\paragraph{\textbf{Lefschetz Thimbles}} 
Let $f: (M,D,\omega)\rightarrow (\Sigma,Z)$ be a stable generalized complex Lefschetz fibration whose fibres in $D$ correspond to $\ker(\res_r \omega)$. Consider the associated log symplectic Lefschetz fibration $\tilde{f}: (\tilde{M},\tilde{D},\tilde{\omega})\rightarrow (\tilde{\Sigma},\tilde{Z})$. According to what we have just shown, this admits an Ehresmann connection $H$ induced by its log symplectic form. 

Like for ordinary symplectic Lefschetz fibrations, we can consider \emph{Lefschetz thimbles} with respect to $H$ over a path in the base surface which ends at the image of one of the Lefschetz singularities in the interior (i.e. a critical value of $\tilde{f}$). 
 If the path is chosen such that it hits $Z$ transversely, the associated Lefschetz thimble will be a logarithmic Lagrangian that intersects $\tilde{D}$ transversely in its (spherical) boundary. 
\begin{figure}[h] \centering 
\includegraphics[scale=0.35]{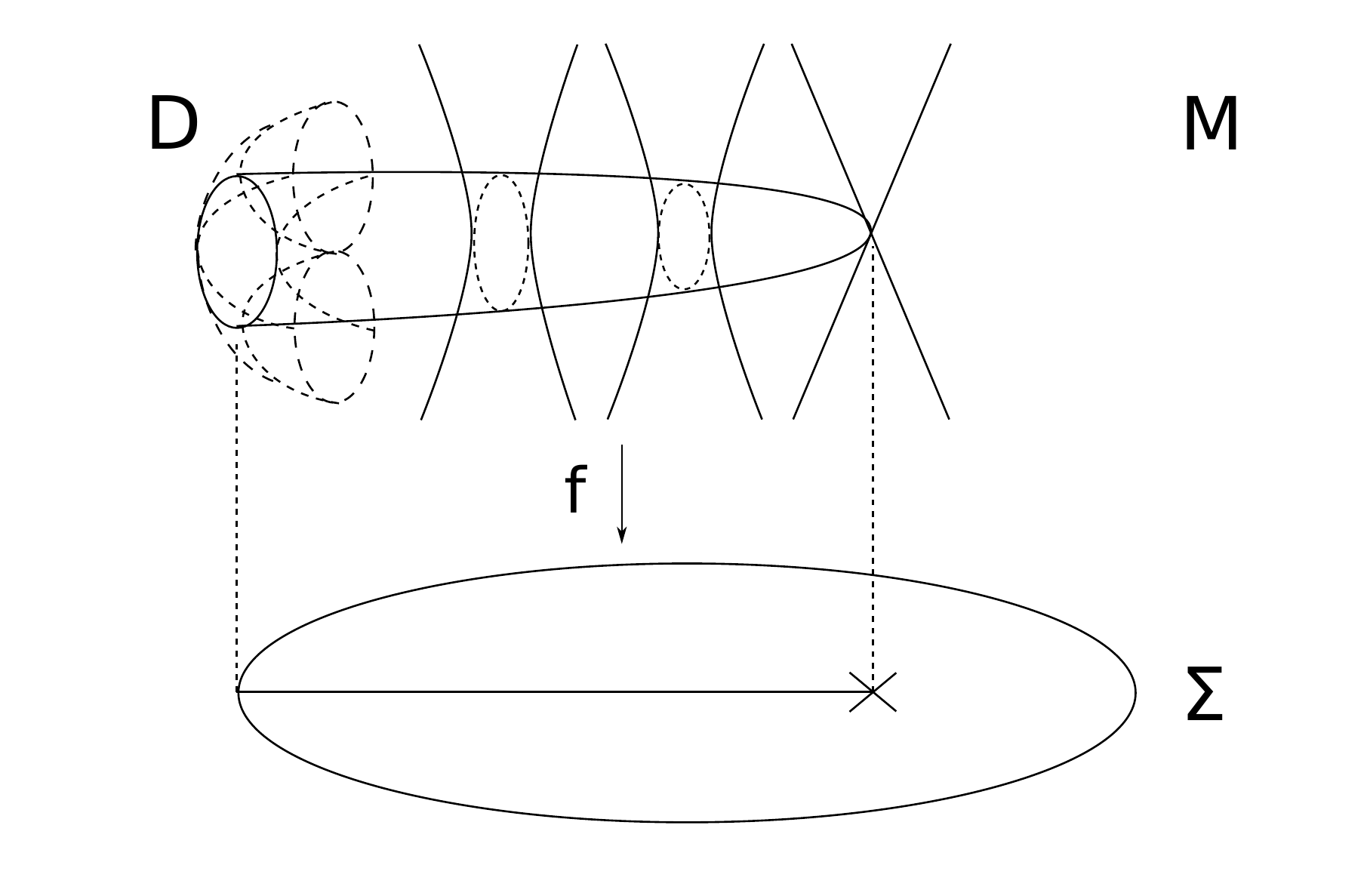}
\caption{A Lefschetz thimble which is a brane with boundary}
\end{figure}

Now, when looking at the image of such a thimble under the blow-down map $\beta$, there are two main cases of interest according to Theorem \ref{thm:branes}:  
\begin{enumerate} 
\item If $\party{}{\theta}$ is not tangent to and $\beta$ is injective on the boundary of the Lefschetz thimble, the image in $(M,D)$ will be a Lagrangian brane with boundary, a Lefschetz thimble whose boundary lies in the anticanonical divisor. 
\item If the $U(1)$-action of $\tilde{D}$ restricts to the boundary sphere of the thimble, the thimble blows down to a Lagrangian brane without boundary (not always smooth, but always Hamiltonian isotopic to a Lagrangian that does blow down smoothly). \\
In the case where the total space of the fibration is a $4$-manifold, \cite{Behrens2017} define the \emph{boundary vanishing cycle} associated to the singular locus of a boundary Lefschetz fibration. This case then precisely occurs when the boundary vanishing cycle is the same as the vanishing cycle of the Lefschetz singularity at the other end of the thimble. The result is a Lagrangian brane which is topologically an $S^2$. 
(By choosing the correct base path, we can always obtain a smooth Lagrangian $S^2$.)   
\end{enumerate} 

In the following, we examine some examples of Lagrangian branes produced by the parallel transport of Lagrangian spheres in the fibres of stable generalized complex Lefschetz fibrations: 
 
\subsection{Example: The Hopf surface}\label{sec:Hopf}
Consider the complex manifold $X=\left(\Cnum\setminus\{0\}\right)/(z\sim 2z)$. This is clearly diffeomorphic to $S^3\times S^1$, viewing $S^3$ as 
\[ S^3= \left\{(z_0,z_1)\in \Cnum^2\vert \abs{z_0}^2+\abs{z_1}^2=1\right\} \] 
\begin{figure}[h] \centering 
\includegraphics[scale=0.3]{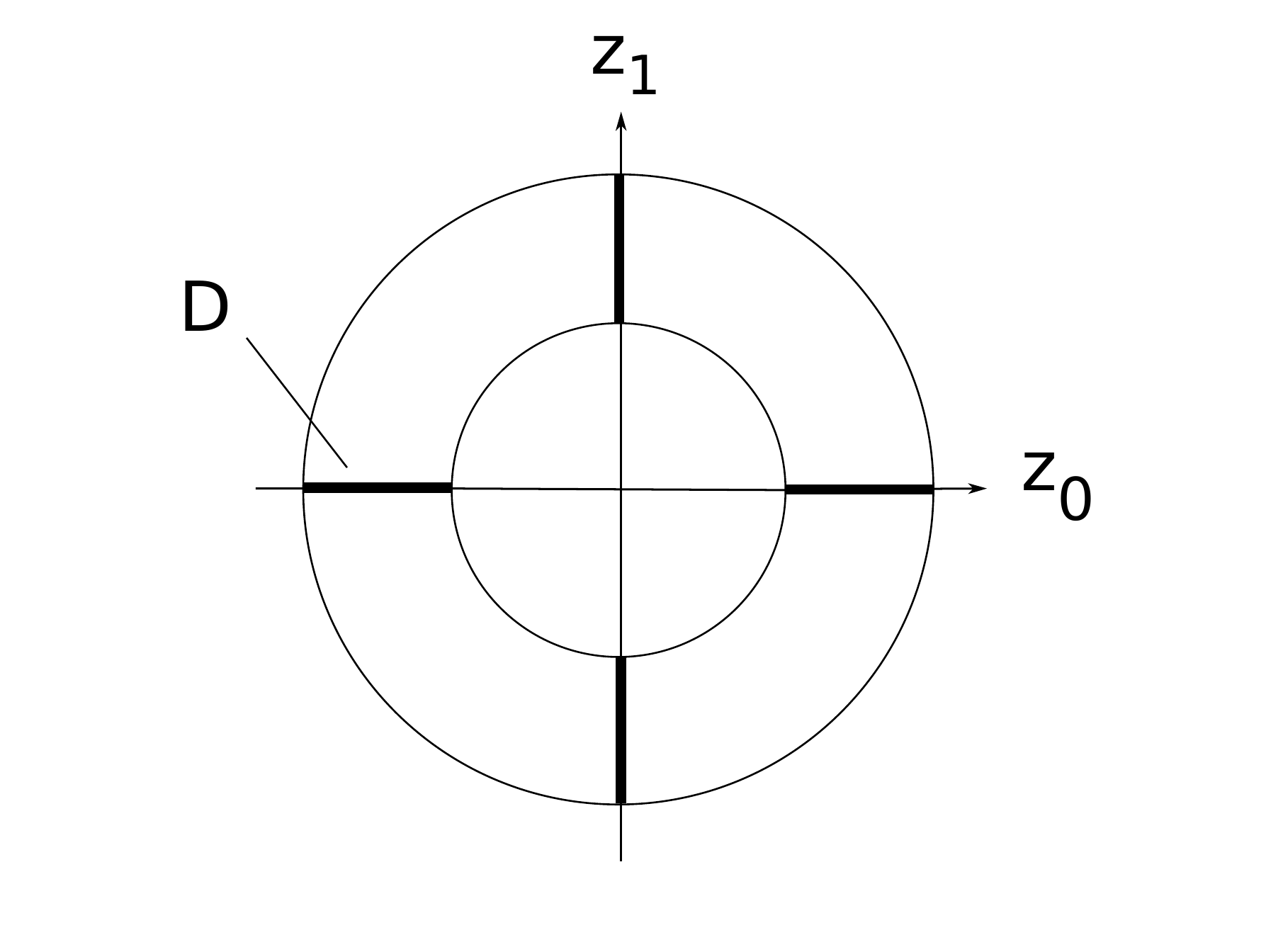}
\caption{Hopf surface in $\Cnum^2\setminus \{0\}$ with anticanonical divisor}
\end{figure} 
We can make $X$ into a boundary Lefschetz fibration without any singular fibres as follows \mbox{(see \cite{Cavalcanti2017}):} 
Compose the Hopf fibration $p: S^3 \rightarrow S^2, (z_0,z_1)\mapsto [z_0:z_1]$ with the standard height function $h:S^2 \rightarrow I=[0,1]$. 
Furthermore consider the $S^1$-coordinate given by 
\begin{equation} \eta=\frac{1}{2} \log \left(\abs{z_0}^2+\abs{z_1}^2\right). \end{equation}
Since $\abs{z_0}\sim 2\abs{z_0}, \abs{z_1}\sim 2\abs{z_1}$, we obtain $\eta \sim \eta + \log 2$. 
 Then 
\[ f(z_0,z_1) = (h([z_0:z_1]),\eta) \] 
defines a boundary Lefschetz fibration $F: X \rightarrow I\times S^1$.  

If $z_0=r_0 e^{i \theta_0}, z_1=r_1 e^{i\theta_1}$ and $t=\frac{r_0}{r_1}$, $(t,\eta,\theta_0,\theta_1)$ are coordinates for $M$ away from $z_1=0$. $(1/t,\eta,\theta_0,\theta_1)$ are coordinates away from $z_0=0$. 

We can pick coordinates $t',1/t'$ on $I$ s.t. the height function maps $(t,\theta_0-\theta_1)\in S^2$ to $t^2\in I$, and similarly on the other coordinate patch. In these coordinates, $f$ becomes $f(t,\eta,\theta_0,\theta_1)=(t^2,\eta)$, i.e. the fibres of $f$ are precisely the $(\theta_0,\theta_1)$-tori. 

\[  \frac{1}{2} \frac{\dx t'}{t'}\wedge \dx \eta = -\frac{1}{2} \frac{\dx (1/t')}{1/t'} \wedge \dx \eta \] 
is a well-defined logarithmic symplectic form on $I\times S^1$. We can pull it back to $M$ via $f$ to obtain 
\[ \frac{\dx t}{t}\wedge \dx \eta = - \frac{\dx (1/t)}{1/t} \wedge \dx \eta \]
This can be completed to the following elliptic symplectic form on $M$: 
\begin{equation} 
\omega= \frac{\dx t}{t}\wedge \dx \eta - \dx \theta_0 \wedge \dx \theta_1 = - \frac{\dx (1/t)}{1/t} \wedge \dx \eta - \dx \theta_0 \wedge \dx \theta_1 
\end{equation} 
This is actually the imaginary part of the holomorphic log symplectic form 
\begin{equation} 
\Omega= i\frac{\dx z_0 \wedge \dx z_1}{z_0 z_1} = \frac{\dx r_1}{r_1} \wedge \dx \theta_0 - \frac{\dx r_0}{r_0} \wedge \dx \theta_1 + i \left( \frac{\dx r_0}{r_0}\wedge \frac{\dx r_1}{r_1} - \dx \theta_0 \wedge \dx \theta_1 \right)
\end{equation} 
The anticanonical divisor of $\Omega$ is $D=\{z_0=0\}\cup\{z_1=0\}$. 
The symplectic orthogonal distribution to the fibres is spanned by 
\[ \left\{ t \party{}{t}=-1/t\party{}{(1/t)}, \party{}{\eta}\right\} \] 
and the corresponding Ehresmann connection $ \tilde{H}: T(I\times S^1)(-\log (\{0\}\times S^1\cup \{1\}\times S^1)) \rightarrow TM(-\log \abs{D}) $
lifts 
\[ t' \party{}{t'} \mapsto t \party{}{t}, \party{}{\eta} \mapsto \party{}{\eta}, \] 
so induces an Ehresmann connection  $H: T(I\times S^1) \rightarrow TM$. 

Now consider any path $\gamma$ in $I\times S^1$ from $(0,\eta_0)$ to $(1,\eta_0)$ with $\eta(s)=\eta_0$ constant. Under $H$, this lifts to a corresponding path with $t(s), \eta=\eta_0, \theta_0=\text{const.}, \theta_1=\text{const.}$
We can for example obtain the following Lagrangian branes with boundary from parallel transporting Lagrangian circles in the fibres along such a path in the base: 
\begin{ex}\label{ex:Hopf}
\begin{enumerate}[label=(\roman*)]
\item Circle with $\theta_0=\text{const.}, \theta_1 \in [0,2\pi)$: When parallel-transported to the component of $D$ with $z_1=0$, this circle closes up. Parallel-transporting all along the path $\gamma$ yields a Lagrangian brane with boundary that is diffeomorphic to $D^2$ and intersects $\{z_0=0\}$ in its circular boundary, $\{z_1=0\}$ in a point. 
Evidently we can exchange $z_0$ and $z_1$ to obtain a similar $D^2$-brane with boundary in $\{z_1=0\}$. 
\item Circle with $\theta_0=\theta_1=\theta\in [0,2\pi)$: When parallel-transported all along the path $\gamma$, we obtain a cylindrical Lagrangian brane with boundary, which intersects both $\{z_0=0\}$ and $\{z_1=0\}$ in a circle. 
\begin{figure}[h] \centering \label{fig:1win}
\includegraphics[scale=0.2]{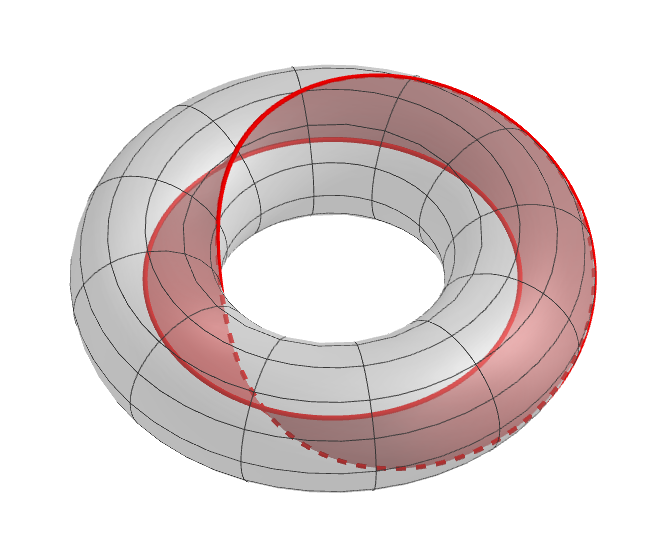}
\caption{Lagrangian (ii) in a neighbourhood of either component of the anticanonical divisor.}
\end{figure} 
\item Circle with $2\theta_0=\theta_1=\theta \in [0,2\pi)$: When parallel-transported all along the path $\gamma$, we obtain a smooth Lagrangian brane with boundary that is topologically a Möbius band. It intersects the $\{z_1=0\}$-locus in a circle which is its boundary, and $\{z_0=0\}$ also in a circle, the zero section of the Möbius band as a subset of the Möbius line bundle. 
Note that when lifted to the real oriented blow-up, this Lagrangian intersects both components of the singular locus in a circle, but on the blow-up of $\{z_0=0\}$ the blow-down map is not injective. 
\begin{figure}[h]\centering \label{fig:2win} 
\begin{tabu} to \linewidth {XX}
\includegraphics[scale=0.2]{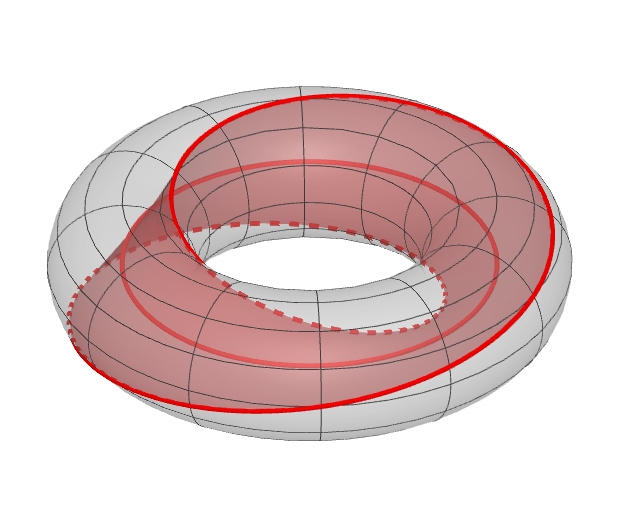} &
\includegraphics[scale=0.2]{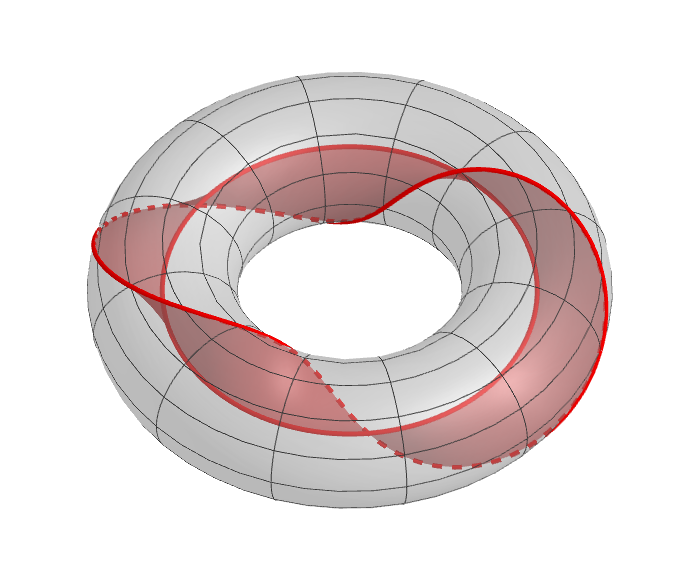} \\ 
\end{tabu} 
\caption{Lagrangian (iii) in an open neighbourhood of both components of the anticanonical divisor: On the left, we see the neighbourhood of $\{z_0=0\}$ and on the right the neighbourhood of $\{z_1=0\}$.}
\end{figure}
\item Circle with $3\theta_0=\theta_1=\theta \in [0,2\pi)$: When parallel-transported all along $\gamma$, this does not result in a smooth submanifold: Away from $\{z_0=0\}$, this is an open Lagrangian cylinder which will intersect $\{z_1=0\}$ in its circular boundary, but at $\{z_0=0\}$ there is a triple intersection of leaves of the cylinder. 
As in the previous case, the blow-down map is not injective on the boundary of the lift of this Lagrangian to the real oriented blow-up. 
\begin{figure}[h]\centering \label{fig:3win} 
\begin{tabu} to \linewidth {XX}
\includegraphics[scale=0.2]{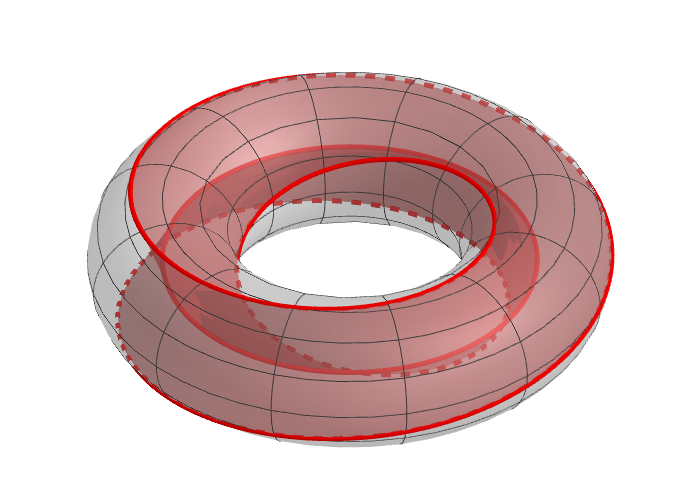} &
\includegraphics[scale=0.2]{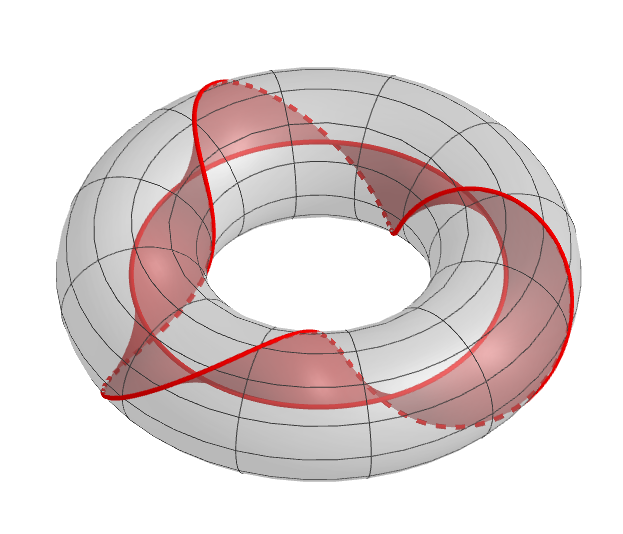} \\ 
\end{tabu} 
\caption{Lagrangian (iv) in an open neighbourhood of both components of the anticanonical divisor: On the left, we see the neighbourhood of $\{z_0=0\}$ and on the right the neighbourhood of $\{z_1=0\}$.}
\end{figure}
\end{enumerate} 
\end{ex} 

\subsection{Examples of genus-one boundary Lefschetz fibrations over the disk} 
\begin{ex} \textbf{Boundary Lefschetz fibration with one Lefschetz singularity.} 
Example 8.4 in \cite{Cavalcanti2017} describes the following scenario: If we consider the $4$-dimensional genus-1 Lefschetz fibration over $D^2$ with one singular fibre with vanishing cycle $b\in H^1(T^2)$, a generator, the monodromy around $\partial D^2$ is the Dehn twist with $b$. Thus this Lefschetz fibration can be completed to a boundary Lefschetz fibration with a stable generalized complex structure whose anticanonical divisor fibres over $\partial D^2$. 

From Proposition 6.2 and 6.5 in \cite{Cavalcanti2017} we obtain coordinates $(s,x,y,z)$ for a neighbourhood of the anticanonical divisor where $s$ is a radial coordinate for the distance from the anticanonical divisor, and $(x,y,z)$ angular coordinates such that 
\begin{align*} 
(x,y,z) &\sim (x,y+1,z) \\
(x,y,z)&\sim (x,y,z+1) \\ 
(x,y,z)&\sim (x+1,y,z-y). 
\end{align*} 
The projection to a tubular neighbourhood of $\partial D^2$ is $(s,x,y,z)\mapsto (s^2,x)$; $(y,z)$ are angular coordinates for the torus fibres. (These coordinates are for what is referred to as the \emph{standard 1-model}.) 
Note that the $z$-coordinate encodes the vanishing cycle $b$. 
On the other hand, $z$ is the angular coordinate in the fibre of the complex line bundle over $D$ that defines the standard 1-model. 

Thus as $r\rightarrow 0$, the $z$-circle shrinks to zero. So any Lefschetz thimble for the single Lefschetz singularity in this example over a path in the base from the singularity to the boundary will be topologically an $S^2$: The vanishing cycle sweeps out a disk when moving along a path away from the singularity, which closes up to a sphere as the vanishing cycle shrinks back to a point when approaching the anti-canonical divisor. 

As described in \cite{Cavalcanti2009} and \cite{Cavalcanti2017}, such spheres can be blown down in a way that is compatible with the stable generalized complex structure. After the blow-down, we obtain another boundary Lefschetz fibration for the Hopf surface. 
\end{ex} 

\begin{ex} \label{ex:3singu}
\begin{figure}[h] \centering 
\includegraphics[scale=0.5]{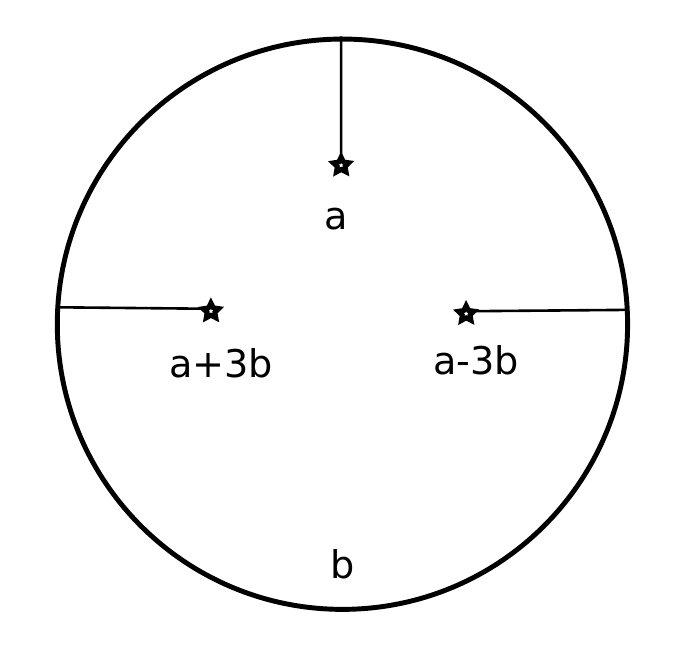}
\caption{Lefschetz singularities in Example \ref{ex:3singu}}
\end{figure} 
There are other genus-1 Lefschetz fibrations over the open disk with multiple Lefschetz singularities, but whose monodromy is still the power of a Dehn twist -- and whenever this is the case, they can be completed to a closed boundary Lefschetz fibration (see Proposition 6.5 in \cite{Cavalcanti2017}). 
For example (Example 8.5 in \cite{Cavalcanti2017}), if $a,b$ are generators of $H_1(T^2)$, there is a Lefschetz fibration with three singularities in the interior of the disk with associated vanishing cycles (in counter-clockwise order) 
\[ a-3b, a, a + 3b. \] 
The global monodromy around all three singularities is $9b$, so we can complete this to a boundary Lefschetz fibration by gluing in the standard 9-model $\operatorname{tot}(L_9)$ (see Proposition 6.5 in \cite{Cavalcanti2017}), such that the total space admits a stable generalized complex structure. Topologically, the resulting closed total space of this boundary Lefschetz fibration is $\Cnum P^2$ (see also Example 5.3 in \cite{Cavalcanti2009}).

In order to extend the Lefschetz thimbles associated to the three Lefschetz singularities into the anticanonical divisor, we follow the $C^{\infty}$-log surgery as described in Section 4 of \cite{Cavalcanti2009}: We consider the honest Lefschetz fibration of $\Cnum P^2 \# 9 \overline{\Cnum P^2}$ over $S^2$, with paths from all $3+9$ Lefschetz singularities to a regular reference fibre. We trivialise this Lefschetz fibration around the reference fibre and perform a $C^{\infty}$-log transform to obtain a generalized complex Lefschetz fibration over the disk. 
\begin{figure}[h] \centering 
\includegraphics[scale=0.5]{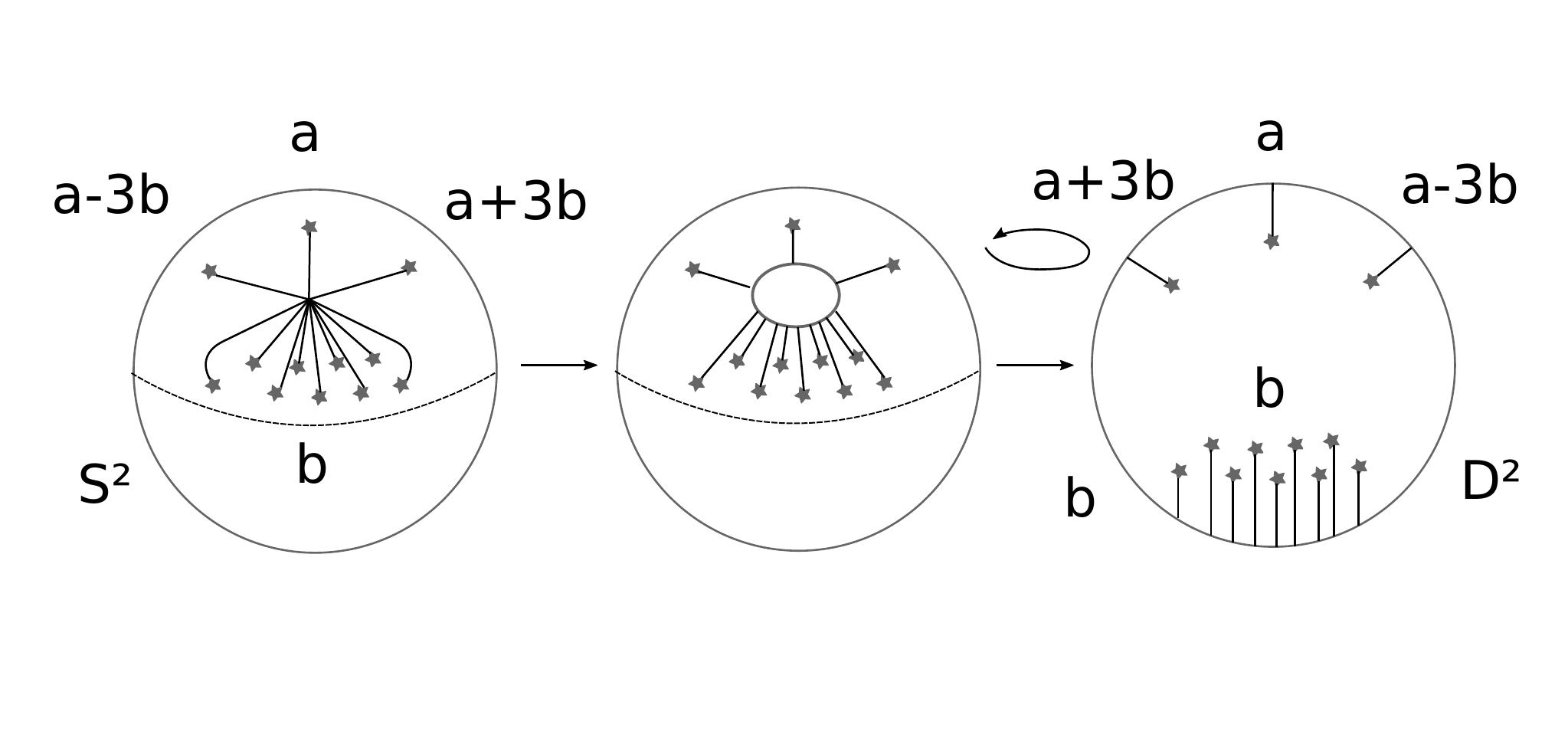}
\caption{Performing a $C^{\infty}$-log transform on the Lefschetz fibration of $\Cnum P^2 \# 9 \overline{\Cnum P^2}$ over $S^2$ to obtain a generalized complex Lefschetz fibration over the disk.}
\end{figure} 
If we identify the regular fibre with the standard torus in such a way that the homology base of cycles $a,b$ corresponds to the canonical circles in the standard torus, with $b$ the boundary vanishing cycle, the Lefschetz thimbles associated to the 9 Lefschetz singularities with cycle $b$ are again 2-spheres, just like in the previous example. As described in \cite{Cavalcanti2009}, these spheres can be blown down. 

Using this surgery, the Lefschetz thimbles associated to the remaining three Lefschetz singularities are branes with boundary. 
\end{ex} 

\section{Lagrangian branes with boundary and complex branes in holomorphic log symplectic manifolds} \label{sec:hol}
As previously established, Lagrangian branes with boundary as studied in this paper are \emph{not} generalized complex branes in the usual sense. 
We will now consider a stable generalized complex structure on a complex surface $X$ which is given by a holomorphic log Poisson structure $\pi$, or equivalently a holomorphic complex log symplectic form $\Omega=B+i\omega$. 

Lagrangian branes with boundary are never complex submanifolds of such a stable generalized complex manifold, however in this section we present two examples where they are isotopic to complex curves outside the degeneracy locus. In both of these case, the resulting complex curve is non-algebraic, so not part of the commonly studied class of complex submanifolds. While the particular construction here is ad hoc in each example, Lagrangian branes with boundary may be a source of non-algebraic complex curves also in other examples. 

\subsection{Example: $\Cnum\times T^2$} 
Consider $M=\Cnum\times T^2$ with complex coordinates $(w,z)$ and stable GC structure given by 
\[ \Omega = B + i \omega = \frac{\dx w}{w}\wedge \dx z = \frac{\dx r}{r}\wedge \dx x - \dx \theta \wedge \dx y + i \left(\frac{\dx r}{r}\wedge \dx y + \dx \theta \wedge \dx x \right) \] 
Write $w=re^{i\theta}, z=x+iy, (w,z)=(r,\theta,x,y)$. Consider a submanifold $L$ given as follows: 
\[L=\{(r,\theta,\theta,f(r))\},\ f(r) \text{ smooth. } \] 
All such submanifolds are Lagrangian branes: 
\[ \iota^*_L \omega = \frac{\dx r}{r} \wedge \party{f}{r} \dx r + \dx \theta \wedge \dx \theta = 0 \] 

\begin{prop} 
If $L_t = \{(r,\theta,\theta,f_t(r))\} $ is a smooth family of such branes outside $D=\{r=0\}$, we can find a time-dependent Hamiltonian vector field $X_t=\omega^{-1}(\dx g_t)$ with $g_t$ a smooth family of smooth maps whose flow $\phi_t$ reproduces the family $L_t$: 
\[ \phi_t(L_0)=L_t \] 
\end{prop}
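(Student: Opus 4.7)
} The plan is to exploit the fact that the family $L_t$ varies only in the $y$-coordinate and produce a Hamiltonian $g_t$ depending only on $r$. First I would identify the infinitesimal motion of the family: parametrising $L_t$ by $(r,\theta)\mapsto(r,\theta,\theta,f_t(r))$, only the last coordinate depends on $t$, so the time-derivative along the family is $\dot f_t(r)\,\partial_y$. To obtain $\phi_t(L_0)=L_t$ it suffices to produce a time-dependent vector field $X_t$ whose restriction to $L_t$ equals $\dot f_t(r)\,\partial_y$ modulo vectors tangent to $L_t$; in fact I will arrange equality on the nose.

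Next I would record the map $g\mapsto X_g=\omega^{-1}(dg)$ for
\[\omega=\frac{dr}{r}\wedge dy+d\theta\wedge dx.\]
A direct calculation gives
\[X_g = r\,g_y\,\partial_r + g_x\,\partial_\theta - g_\theta\,\partial_x - r\,g_r\,\partial_y.\]
The key observation is that if $g$ depends only on $r$, then $X_g=-r g'(r)\,\partial_y$ is purely in the $\partial_y$-direction, exactly the direction in which $L_t$ moves. It therefore suffices to solve $-r g_t'(r)=\dot f_t(r)$, for example by setting
\[g_t(r) \;:=\; -\int_{r_0}^{r}\frac{\dot f_t(s)}{s}\,ds\]
for some fixed basepoint $r_0>0$.

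Finally, since the construction takes place outside $D=\{r=0\}$ and $f_t$ depends smoothly on $(r,t)$, the integrand is smooth in $(s,t)$, so $g_t$ is a smooth family of smooth functions, and one reads off that the flow of $X_t=\dot f_t(r)\,\partial_y$ leaves $r,\theta,x$ fixed while translating $y$ by $\int_0^t\dot f_s(r)\,ds = f_t(r)-f_0(r)$. Starting from a point of $L_0$, where $y=f_0(r)$, one therefore lands at $y=f_t(r)$, i.e.\ on the corresponding point of $L_t$, so $\phi_t(L_0)=L_t$.

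The main obstacle is essentially absent: the coincidence that the normal motion of the family lies entirely in one coordinate direction makes the Hamiltonian choice trivial, and smoothness is automatic once we stay outside $D$. Had the family also been allowed to move in the $x$-direction one would need to solve a small system on $L_t$ coming from the two equations $(g_t)_x+(g_t)_\theta=0$ and $-r(g_t)_r-r(g_t)_y f_t'(r)=\dot f_t(r)$; it is the special form of $L_t$ that lets us discard the first equation entirely and collapse the second onto a single ODE in $r$.
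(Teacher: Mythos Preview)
Your proof is correct and follows essentially the same route as the paper's: both recognise that the family moves only in the $y$-direction, choose $g_t$ depending on $r$ alone so that $dg_t=-\dot f_t(r)\,\frac{dr}{r}$ (you write the primitive explicitly as an integral, the paper simply observes exactness on $\{r>0\}$), and then read off that the Hamiltonian flow is $\phi_t(r,\theta,x,y)=(r,\theta,x,y+f_t(r)-f_0(r))$. Your added remark about what would change if the family also moved in $x$ is a nice piece of context but not needed for the proof.
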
 

\begin{proof} 
Since $f_t=f_t(r)$, $-\frac{\dx f_t}{\dx t} \frac{\dx r}{r}$ is an exact one-form away from $r=0$, so $-\frac{\dx f_t}{\dx t} \frac{\dx r}{r}=\dx g_t$. 
Now we consider the time-dependent Hamiltonian vector field 
\[ X_t=\omega^{-1}(\dx g_t) = \frac{\dx f_t}{\dx t} \party{}{y} \] 
Its flow satisfies: 
\[ \frac{\dx \phi_t^y}{\dx t} = \frac{\dx f_t}{\dx t},  \] 
so $\phi_t(r,\theta,x,y)=(r,\theta,x,y+f_t(r)-f_0(r))$. \\
Thus $\phi_t(L_0)= \{ \phi_t(r,\theta,\theta,f_0(r)) \} = \{ (r,\theta,\theta,f_t(r)) \} = L_t$.
Note that this flow is everywhere well-defined for all $t\in [0,1]$ and $r>0$. 
\end{proof} 

$L=\{z=-i\log w\}=\{(r,\theta,\theta,-\log r)\}$ defines a cylindrical complex brane in $M$ which does not intersect $D=\{w=0\}$, instead it wraps around the $y$-direction faster and faster as $r\rightarrow 0$. 

Consider the family of branes $ L_t = \{(r,\theta,\theta, (t-1)\log(r+t))\}$.
For $t>0$ this is a family of Lagrangian branes with boundary, and we have $L_0=L, L_1=\{(r,\theta,\theta,0)\}$. 
\[ \phi_t(r,\theta,x,y)=(r,\theta,x,y+(t-1)\log(r+t) +\log(r)) \] 
is the Hamiltonian flow that maps these branes into each other. It is well-defined and smooth away from the anticanonical divisor. The closer one approaches the anticanonical divisor, the more the flow has to move the brane with boundary $\{(r,\theta,\theta,0)\}$ in order to make it complex (or vice versa). 

\subsection{Example: Hopf Surface} 
This example follows exactly the same pattern as the first: We consider the Hopf surface $X$ with the same coordinates and stable generalized complex structure as in Section \ref{sec:Hopf} and show that outside the anticanonical divisor the branes with boundary in Example \ref{ex:Hopf} can be deformed into complex submanifolds: 

\begin{ex}\begin{enumerate}[label=(\roman*)]
\item A complex submanifold of $X$ is given by $L=\{z_1=\text{const.}=a e^{i\sigma}\}$. Now, this obviously intersects the anticanonical divisor at $\{z_0=0\}$ in a point, but does not intersect $\{z_1=0\}$, instead wrapping infinitely often around the $\eta$-direction as $t=\frac{r_0}{a} \rightarrow \infty$. 
$L$ lies over the path 
\[ r_0 \mapsto \left(r_0^2, \frac{1}{2}\log(r_0^2 + a^2)\right) \] 
in the base. In terms of the coordinates $(t,\eta,\theta_0,\theta_1)$: 
\[ L=\left\{\left(t, \log(a) + \frac{1}{2}\log\left(t^2+1\right)\right)\right\}, \]
or in terms of $t'=1/t$: 
 \[ L=\left\{\left(t', \log(a) + \frac{1}{2}\log\left(\frac{1}{t'^2}+1\right)\right)\right\}. \]

We can interpolate between $L=L_0$ and the brane with boundary 
\[ L_1=\{(t,0,\theta,\sigma)\} \] 
with the family of branes 
\[ L_s=\left\{\left(t,f_s(r), \theta,\sigma\right)\right\},\] 
\[ f_s(t)=(1-s)\left(\log(a)+\frac{1}{2}\log\left( \frac{t^2}{(1+st)^2} +1 \right)\right), \]
and this interpolation can again be realised in terms of a time-dependent Hamiltonian vector field, namely 
\[ \party{f_s(t)}{s} \party{}{\eta} \] 

\item Next, consider the following cylindrical Lagrangian, which is parametrised by one complex coordinate $z=r e^{i\theta}, r\neq 0$: 
\[ L:= \left\{ \left(2r^2, \frac{1}{2} \log \left(r^2+ \frac{1}{2r^2}\right), \theta, -\theta\right)\right\} \] 
Under the map $f$, $L$ projects to the path in $[0,1]\times S^1$ 
\[ r \mapsto \left(4 r^4, \frac{1}{2} \log \left(r^2+ \frac{1}{2r^2}\right)\right) \] 
In terms of the complex coordinates $(z_1,z_2)$, this is $L=\left\{ \left(z, \frac{1}{2z}\right)\right\}$. Clearly, this is a complex submanifold that does not intersect the anticanonical divisor $\{z_0=0\}\cup \{z_1=0\}$. 

Now consider the following family of Lagrangians: 
\[ L_s:=\left\{\left(t, f_s(t), \theta,-\theta\right)\right\}, f_s(r)= \frac{1}{2}(1-s)\left(\log(1/2) + \log \left( \frac{t}{1+ts}+\frac{1}{t+s}\right)\right) \] 
This family of Lagrangians interpolates between $L_0=L$ and the brane with boundary (two $S^1$ boundary components, one in each of the two connected components of the anticanonical divisor) 
\[ L_1=\{(t,0,\theta,-\theta)\} \] 
Whenever $s>0$, $L_s$ extends into the the anticanonical divisor at either end. The closer $s$ is to zero, the stronger the brane $L_s$ and its corresponding base path $(t^2, f_s(t))$ wrap in the $\eta$-direction. 

The Hamiltonian vector field which flows $L_0$ into $L_s$ is given by 
$\party{f_s}{s} \party{}{\eta}$. 
\end{enumerate}\end{ex} 

\section{Conclusions and Outlook}
In this text we have studied stable generalized complex manifolds through the lens of their associated elliptic symplectic form, a special example of a \emph{Lie algebroid symplectic form}. We have shown that stable generalized complex structures are related to certain logarithmic symplectic structures via the real oriented blow-up of the anticanonical divisor. 

Stable generalized complex manifolds are in many ways the simplest class of examples of generalized complex manifolds that are neither symplectic nor complex (and include underlying manifolds that do not admit either a symplectic or a complex structure), and since the structure can be described in terms of an elliptic symplectic form, the quest to extend more techniques from symplectic geometry to stable generalized complex geometry is a natural continuation of the work presented here so far. 

The main focus of this paper has been on Lagrangian branes with boundary, a new class of submanifold with boundary for stable generalized complex manifolds that is not included in the previously studied class of generalized complex branes. And yet, since they are Lagrangian with respect to the elliptic symplectic form, and in particular the restricted symplectic form away from the anticanonical divisor, and appear as Lefschetz thimbles in stable generalized complex Lefschetz fibrations, they can be expected to appear in the construction of a generalisation of the Fukaya category for stable generalized complex manifolds, either considering non-compact Lagrangians in the symplectic bulk, or in a possible adaptation of the Fukaya-Lefschetz approach. 

\bibliography{biblio}
\bibliographystyle{amsalpha} 

\end{document}